\title{Curve-excluding fields}
\author{Will Johnson and Jinhe Ye}
\email{willjohnson@fudan.edu.cn, jinhe.ye@maths.ox.ac.uk}
\DeclareFontFamily{U}{BOONDOX-calo}{\skewchar\font=45 }
\DeclareFontShape{U}{BOONDOX-calo}{m}{n}{
  <-> s*[1.05] BOONDOX-r-calo}{}
\DeclareFontShape{U}{BOONDOX-calo}{b}{n}{
  <-> s*[1.05] BOONDOX-b-calo}{}
\DeclareMathAlphabet{\mathcalboondox}{U}{BOONDOX-calo}{m}{n}
\SetMathAlphabet{\mathcalboondox}{bold}{U}{BOONDOX-calo}{b}{n}
\DeclareMathAlphabet{\mathbcalboondox}{U}{BOONDOX-calo}{b}{n}
\newcommand{\pdeg}{\mathrm{pdeg}}
\newcommand{\trdg}{\operatorname{tr.\!dg}}
\newcommand{\Gal}{\operatorname{Gal}}
\newcommand{\Spec}{\operatorname{Spec}}
\newcommand{\Aut}{\operatorname{Aut}}
\newcommand{\Frac}{\operatorname{Frac}}
\newcommand{\acl}{\operatorname{acl}}
\newcommand{\Th}{\operatorname{Th}}
\newcommand{\dcl}{\operatorname{dcl}}
\newcommand{\tp}{\operatorname{tp}}
\newcommand{\qftp}{\operatorname{qftp}}
\newcommand{\XF}{\mathrm{XF}}
\newcommand{\Abs}{\mathrm{Abs}}
\newcommand{\alg}{\mathrm{alg}}
\newcommand{\Res}{\mathrm{Res}}
\newcommand{\clring}{\mathcal{L}_{\mathrm{ring}}}
\newcommand{\ACF}{\mathrm{ACF}}
\newcommand{\PAC}{\mathrm{PAC}}
\DeclareMathOperator{\id}{id}
\newtheorem*{claim-star}{Claim}
\newtheorem{theorem}{Theorem}[section] 
\newtheorem{lemma}[theorem]{Lemma}
\newtheorem{prop-def}[theorem]{Proposition-Definition}
\newtheorem{corollary}[theorem]{Corollary}
\newtheorem{fact}[theorem]{Fact}
\newtheorem{fact-eh}[theorem]{Fact(?)}
\newtheorem{conjecture}[theorem]{Conjecture}
\newtheorem{proposition}[theorem]{Proposition}
\newtheorem{proposition-eh}[theorem]{Proposition(?)}
\newtheorem*{theorem-star}{Theorem}
\newtheorem*{conjecture-star}{Conjecture}
\newtheorem*{question-star}{Question}
\newtheorem*{lemma-star}{Lemma}
\theoremstyle{definition}
\newtheorem{definition}[theorem]{Definition}
\newtheorem{example}[theorem]{Example}
\newtheorem{question}[theorem]{Question}
\newtheorem{remark}[theorem]{Remark}
\theoremstyle{remark}
\newtheorem{claim}[theorem]{Claim}
\newtheorem*{acknowledgment}{Acknowledgments}
\newtheorem*{warning}{Warning}
\newcommand{\Aa}{\mathbb{A}}
\newcommand{\Qq}{\mathbb{Q}}
\newcommand{\Rr}{\mathbb{R}}
\newcommand{\Zz}{\mathbb{Z}}
\newcommand{\Pp}{\mathbb{P}}
\newcommand{\Mm}{\mathbb{M}}
\newcommand{\cL}{\mathcal{L}}
\newcommand{\NSOP}{\mathrm{NSOP}}
\newenvironment{claimproof}[1][\proofname]
               {
                 \proof[#1]
                 
               }
               {
                 \endproof
               }
\def\Ind#1#2{#1\setbox0=\hbox{$#1x$}\kern\wd0\hbox to 0pt{\hss$#1\mid$\hss}
\lower.9\ht0\hbox to 0pt{\hss$#1\smile$\hss}\kern\wd0}
\def\ind{\mathop{\mathpalette\Ind{}}}
\def\notind#1#2{#1\setbox0=\hbox{$#1x$}\kern\wd0
\hbox to 0pt{\mathchardef\nn=12854\hss$#1\nn$\kern1.4\wd0\hss}
\hbox to 0pt{\hss$#1\mid$\hss}\lower.9\ht0 \hbox to 0pt{\hss$#1\smile$\hss}\kern\wd0}
\def\nind{\mathop{\mathpalette\notind{}}}
\begin{document}

\begin{abstract}
  If $C$ is a curve over $\Qq$ with genus at least $2$ and $C(\Qq)$ is empty, then the class
  of fields $K$ of characteristic 0 such that $C(K) = \varnothing$ has
  a model companion, which we call $C\XF$.  The theory $C\XF$ is not complete, but we characterize the completions.  Using $C\XF$, we produce
  examples of fields with interesting combinations of properties.  For
  example, we produce (1) a model-complete field with unbounded Galois
  group, (2) an infinite field with a decidable first-order theory
  that is not ``large'' in the sense of Pop, (3) a field that is
  algebraically bounded but not ``very slim'' in the sense of Junker
  and Koenigsmann, and (4) a pure field that is strictly NSOP$_4$, i.e.,
  NSOP$_4$ but not NSOP$_3$.  Lastly, we give a new construction
  of fields that are virtually large but not large.
\end{abstract}

\maketitle

\section{Introduction}
\subsection{Motivation}
The model theory of fields has been long-studied. It is a classical
theorem of Tarski~\cite[Theorem 2.7.3]{Hodges} that algebraically
closed fields have quantifier elimination in the language of rings ($\clring$), and
Macintyre showed that no other infinite fields have quantifier
elimination in $\clring$~\cite{Macintyre-omegastable}.

In contrast, the class of model complete fields is much richer.
Recall that a theory $T$ is \emph{model complete} if every formula is
equivalent modulo $T$ to an existential formula. This is the immediate logical weakening of quantifier elimination.  A structure $M$ is
\emph{model complete} if its complete theory $\Th(M)$ is model
complete.  Many important theories of fields are model complete,
including algebraically closed fields, real closed fields,
$p$-adically closed fields, and some pseudofinite fields. Based on
these examples, Macintyre asked the following question:
\begin{question} \label{q-1}
  If a field $K$ is model complete (in the language of rings), must
  the Galois group $\Gal(K)$ be bounded?
\end{question}
Here, we say that $\Gal(K)$ is \emph{bounded} if $K$ has finitely many
separable extensions of degree $n$ for each $n$, or equivalently,
$\Gal(K)$ has finitely many open subgroups of index $n$ for each $n$.
When $K$ is highly saturated, this condition is equivalent to
$|\Gal(K)| \le 2^{\aleph_0}$.

A related question was asked by Junker and Koenigsmann~\cite{JK-slim}:
\begin{question} \label{q-2}
  If $\Gal(K)$ is bounded and $K$ is infinite, must $K$ be large?
\end{question}
Here, a field $K$ is \emph{large} in the sense of Pop~\cite{pop-embedding} if every smooth curve $C$ over $K$ with at least one $K$-rational point has infinitely many $K$-rational points. Large fields are also called \emph{ample} or \emph{anti-Mordellic}.  To the best of our knowledge, all of
the fields considered ``tame" by model theorists are large, including the
examples mentioned above and much beyond.  In contrast, number fields like $\Qq$ fail
to be large by Faltings' theorem.

Chaining together Questions~\ref{q-1} and \ref{q-2}, we get the
following, which also appeared in~\cite{JK-slim} and \cite{secondpaper}.
\begin{question} \label{q-3}
  If a field $K$ is model complete (in the language of rings), must
  $K$ be large?
\end{question}
Question~\ref{q-3} dovetails well with recent work on stable large fields \cite{firstpaper} and simple large fields \cite{with-anand}.
A positive answer to this question would have several interesting consequences.  For example, it would imply that all model complete fields are
\'ez
  in the sense of~\cite{secondpaper}, which would enable a uniform
  topological approach to model complete fields.  A negative answer would show that the connection between ``logical tameness'' (e.g. model completeness) and ``field-theoretic tameness'' (e.g. largeness) is more subtle than expected.

As it happens, there is a standard method to construct model complete
theories.  Recall that a structure $M$ is \emph{existentially closed
(e.c.)} in an extension $N$, written $M \prec_\exists N$, if
\begin{equation*}
  N \models \phi \implies M \models \phi
\end{equation*}
for existential sentences $\phi$ with parameters in $M$.  For a fixed theory $T$, a model $M
\models T$ is \emph{e.c.} if it is e.c.\@ in any extension $N
\supseteq M$ satisfying $T$.

There is a close connection between model completeness and e.c.\@
models.  For example, $T$ is model complete if and only if all models
are e.c.  Recall that a theory $T$ is \emph{inductive} if any union of
a chain of models is a model, or equivalently, $T$ is axiomatized by
$\forall\exists$-sentences.
\begin{fact} \label{mc-fact}
  Let $T$ be a consistent inductive theory.
  \begin{enumerate}
  \item Every model of $T$ embeds into an e.c.\@ model.  In
    particular, e.c.\@ models exist.
  \item Suppose there is a theory $T^{\mathrm{ec}}$ axiomatizing the
    class of e.c.\@ models of $T$.  Then $T^{\mathrm{ec}}$ is
    consistent and model complete.
  \end{enumerate}
\end{fact}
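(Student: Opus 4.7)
The plan is to prove (1) by a transfinite iterative construction that ``cures'' unrealized existential formulas, and then to deduce (2) from (1) combined with Robinson's test. Both halves are classical; the substantive content is the bookkeeping in (1).

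For (1), fix $M_0 := M \models T$. I call an existential formula $\exists \bar{x}\,\phi(\bar{x}, \bar{a})$ a \emph{defect} of $M_\alpha$ if $\bar{a}$ is a tuple from $M_\alpha$, the formula is not realized in $M_\alpha$, but is consistent with $T \cup \mathrm{Diag}(M_\alpha)$ (equivalently, realized in some extension $N \supseteq M_\alpha$ with $N \models T$). The construction builds a chain $M_0 \subseteq M_1 \subseteq \cdots$ of models of $T$ of length $\kappa$ for some regular $\kappa > |M| + |T|$. At successor stages, I fix a defect of $M_\alpha$ (scheduled by a careful global bookkeeping) and pass to an extension $M_{\alpha+1} \supseteq M_\alpha$, $M_{\alpha+1} \models T$, that realizes it; such an extension exists by the consistency clause. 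At limit stages, I take unions, using inductivity of $T$ to remain in the class of models. If the bookkeeping is designed so that every triple $(\alpha, \phi(\bar{x},\bar{y}), \bar{a})$ with $\bar{a} \in M_\alpha$ is scheduled cofinally often, then $M_\kappa := \bigcup_{\alpha < \kappa} M_\alpha$ has no defects: any candidate defect has parameters already in some $M_\alpha$, and was handled at some later stage. Thus $M_\kappa$ is e.c.

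For (2), consistency is immediate from (1). To prove model completeness, I apply Robinson's criterion: it suffices to show $M \prec N$ whenever $M \subseteq N$ are both models of $T^{\mathrm{ec}}$. Since $M$ is e.c.\ and $N \models T$ extends $M$, one has $M \prec_\exists N$. A compactness argument shows $\mathrm{Diag}(N) \cup \mathrm{ElDiag}(M)$ is consistent --- any inconsistency would yield an existential formula with parameters in $M$, true in $N$ but false in $M$, contradicting $M \prec_\exists N$ --- so there is an elementary extension $M \prec M_1$ containing $N$. Now $N$ is also e.c.\ and $N \subseteq M_1 \models T$, so the same argument gives $N \prec N_1 \supseteq M_1$. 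Iterating yields elementary chains $M \prec M_1 \prec M_2 \prec \cdots$ and $N \prec N_1 \prec N_2 \prec \cdots$ with common union; Tarski's chain lemma then gives $M \prec \bigcup_i M_i = \bigcup_i N_i \succ N$, hence $M \prec N$.

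The main obstacle is the bookkeeping in (1): one must choose a schedule of defects that visits every potential defect appearing anywhere in the chain, even though new parameters (and hence new possible defects) are introduced at every stage. A clean way to organize this is to fix $\kappa$ regular with $\kappa > |M| + |T|$, note that each $M_\alpha$ has cardinality at most $\kappa$, and diagonalize so that every triple $(\alpha, \phi, \bar{a})$ is scheduled at some stage $\beta > \alpha$. Regularity of $\kappa$ ensures that parameter tuples for $M_\kappa$ lie cofinally in the chain and that closure under unions behaves well. Everything in (2), including the zig-zag chain, is then routine.
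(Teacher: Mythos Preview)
The paper does not prove this statement---it is stated as a \texttt{Fact} with an implicit reference to Hodges~\cite{Hodges}, Chapter~8, and no argument is given. Your proof is correct and is essentially the classical one: the transfinite chain with bookkeeping for part~(1) and the zig-zag elementary chain for part~(2) are exactly the standard arguments one finds in Hodges. One minor remark on the bookkeeping in~(1): to keep the cardinality of each $M_\alpha$ below $\kappa$ you are implicitly invoking downward L\"owenheim--Skolem when passing from $M_\alpha$ to $M_{\alpha+1}$, and you should say so; otherwise the extension realizing the defect could be arbitrarily large and the count of triples could blow up.
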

In part (2), $T^{\mathrm{ec}}$ is called the \emph{model companion} of
$T$. In this case, we also say \emph{the model companion of $T$ exists} or $T$ \emph{has a model companion.} For a more general treatment of model companions, see~\cite[Chapter 8]{Hodges}.

This suggests how one might construct a counterexample to
Question~\ref{q-3}.  Let $C$ be any smooth curve over $\mathbb{Q}$ with finitely many rational points. Let $T$ be the theory of fields $K$
of characteristic 0 such that there are no $K$-rational points on $C$
other than the given $\Qq$-rational points.  No model of $T$ is large.
If $T$ has a model companion $T^{\mathrm{ec}}$, then any model $M
\models T^{\mathrm{ec}}$ will be model complete, but not large.  Luckily for us, the model companion exists whenever $C$ has genus $\ge 2$.  For example, one can take $C$ to be the affine curve $x^4 + y^4 = 1$, the fourth Fermat curve, whose only rational points are $\{(\pm 1, 0), (0, \pm 1)\}$.

This gives a negative answer to Question \ref{q-3}.  This means that one of
Question~\ref{q-1} and Question~\ref{q-2} must have a negative answer.
By carefully analyzing the counterexample, we get a negative answer to
Macintyre's question: 

\begin{theorem}[{= Corollary~\ref{cor:non-large-model-complete}, Theorem~\ref{thm:unbdd}}]
  There is a model complete field $K$ with an unbounded Galois group,
  such that $K$ is not large.
\end{theorem}
On the other hand, Question~\ref{q-2} remains open.

\subsection{The theory $C$XF in general}
We now describe our construction in more detail and greater
generality, giving an outline of out results.  Let $K_0$ be a field of characteristic 0 and let $C$ be a
projective or affine curve over $K_0$ with genus at least 2 and no $K_0$-rational points.  Let
$T_{C,K_0}$ be the theory of fields $K/K_0$ with $C(K) = \varnothing$.  When
$K_0 = \Qq$, we can work in the language of rings, but in general we
should work in the language of rings with added constant symbols for
the elements of $K_0$.
\begin{theorem}[{= Theorem~\ref{thm:cxf-exist}, Remark~\ref{rem:computable}}]
  $T_{C,K_0}$ has a computably axiomatized model companion $C\XF_{K_0}$.
\end{theorem}
We will omit the subscript $K_0$ when it is clear from context, just writing $C\XF$.
\begin{warning}
\begin{enumerate}
\item In the example mentioned earlier, the field $K_0$ is $\Qq$
and the curve $C$ is not the Fermat curve $C_4$, but rather the open
subvariety $C_4 \setminus \{(\pm 1, 0), (0, \pm 1)\}$.
\item In spite of our original motivation, models of $C$XF may or may not be large.  We will say more about this shortly.
\end{enumerate}
\end{warning}

We now summarize the fundamental properties of $C\XF$, beginning with
the classification of its completions.  If $K$ extends $K_0$, let
$\Abs(K)$ denote the relative algebraic closure of $K_0$ in $K$.
\begin{theorem}\label{thm:abs}
  \begin{enumerate}
  \item If $M_1,M_2 \models C\XF$, then $M_1 \equiv M_2$ if and only
    if $\Abs(M_1) \cong \Abs(M_2)$ over $K_0$. (Theorem~\ref{completions-1}).
  \item If $K$ is an algebraic extension of $K_0$ such that $K \models
    T_{C,K_0}$ (meaning $C(K) = \varnothing$), then there is some $M \models
    C\XF$ with $\Abs(M) \cong K$. (Theorem~\ref{completions-2}).
  \end{enumerate}
\end{theorem}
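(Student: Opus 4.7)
The plan is to leverage the model completeness of $C\XF$ together with amalgamation, resting on an algebraic-geometric rigidity fact for curves of genus $\ge 2$. For (1), the forward direction is nearly immediate: for each finite extension $L/K_0$, the property ``$L$ embeds into $M$'' is equivalent to the existence of a root of the minimal polynomial of a primitive element of $L/K_0$, a first-order sentence. Hence $\Th(M)$ determines the set of finite sub-extensions of $\Abs(M)$, and thus $\Abs(M)$ itself up to isomorphism over $K_0$.

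For the backward direction of (1), let $M_1, M_2 \models C\XF$ with $\Abs(M_1) \cong \Abs(M_2)$, and identify both with a common $K$. Since we are in characteristic $0$ and $K$ is relatively algebraically closed in each $M_i$, the extension $M_i/K$ is regular; hence $M_1 \otimes_K M_2$ is a domain and $F := \Frac(M_1 \otimes_K M_2)$ is a field containing both $M_i$. The heart of the argument is to show $C(F) = \varnothing$, which should follow from a rigidity statement: a $C$-point of $F$ would, via a limit of finitely generated sub-extensions, give a rational map from a product $V_1 \times_K V_2$ of geometrically integral varieties to $C$, and a de Franchis-type theorem forces such a map to factor through a projection $V_1 \times_K V_2 \to V_i$, contradicting $C(M_1) = C(M_2) = \varnothing$. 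Granting $F \models T_C$, we embed $F$ into an e.c.\@ model $N \models C\XF$; by model completeness $M_i \preceq N$ for $i=1,2$, giving $M_1 \equiv M_2$.

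For (2), I would construct $M$ as the union of a transfinite chain $(M_\beta)$ with $M_0 = K$, maintaining $M_\beta \models T_C$ and $\Abs(M_\beta) = K$. At each successor stage two tasks are dovetailed. First, to force e.c., for each variety $V/M_\beta$ whose function field extends $T_C$, adjoin the generic point of a geometrically integral component---this is a regular extension, so preserves $\Abs = K$. Second, to prevent any $\gamma \in \overline{K_0} \setminus K$ from being absorbed into $\Abs(M)$, use e.c.\@ in reverse: by e.c., $\gamma$ would enter $M$ only if $C(M \cdot K(\gamma)) = \varnothing$ is realizable in an extension, so it suffices to arrange at some stage that $\Res_{K(\gamma)/K}(C)$ has an $M_\beta$-point (equivalently, $C(M_\beta(\gamma)) \neq \varnothing$), blocking $\gamma$. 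This too is achieved by adjoining the generic point of the Weil restriction, which is a geometrically integral $K$-variety, so again the extension is regular. Iterating over all relevant $V$ and $\gamma$ yields $M$ with $C(M) = \varnothing$, $M$ e.c., and $\Abs(M) = K$.

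The main obstacle is the algebraic-geometric rigidity input. For (1) it says that rational maps from products of geometrically integral varieties to a genus $\ge 2$ curve factor through a projection; for (2) it must also ensure that successive extensions by generic points of the varieties in question (including the Weil restrictions) preserve $C(\cdot) = \varnothing$. I expect both to be packaged as a single lemma proved earlier in the paper. Granted this, the model-theoretic scaffold above---compositum amalgamation for (1) and transfinite dovetailed enumeration for (2)---is routine.
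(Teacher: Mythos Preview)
Your approach to part~(1) matches the paper's: amalgamate $M_1$ and $M_2$ over their common $\Abs = K$ inside $\Frac(M_1\otimes_K M_2)$, check that this fraction field avoids $C$ via the factorization lemma you anticipate (the paper's Lemma~\ref{lem:factor}, packaged for fields as Lemma~\ref{lem:regular-tensor}), extend to a model of $C\XF$, and invoke model completeness. Your ``forward'' direction is a more explicit version of the paper's one-line appeal to elementary amalgamation.

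For part~(2) your construction works but is more elaborate than necessary, and the justification for ``task~2'' is slightly off. The paper's route is to first prove that axiom~(I$_C$) is \emph{redundant}: if $K$ satisfies (II$_C$) and (III$_C$), then every proper finite extension of $K$ is PAC (Theorem~\ref{PAC}, proved via the Weil-restriction Lemma~\ref{lem:Weil-res}), and hence automatically carries a $C$-point. Granting this, one simply builds $M$ as a transfinite chain of \emph{regular} extensions $M_\beta(V)$ over geometrically integral $V$ with no dominant map to $C$ (Theorem~\ref{extender}); this secures (II$_C$) and hence e.c., while regularity alone forces $\Abs(M_\beta)=K$ at every stage. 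Your task~2 is therefore superfluous: you do not need to plant $C$-points in $M_\beta(\gamma)$ to keep $\gamma$ out of $M$, since no regular extension can absorb an element of $K_0^{\alg}\setminus K$ in the first place. Note also that your task~2 would only treat finite extensions of $M$ generated by elements of $K_0^{\alg}$, not arbitrary finite extensions, so it would not directly establish (I$_C$) even if that were the goal. The Weil-restriction idea is exactly the right one; the paper simply deploys it once, structurally, to eliminate an axiom rather than threading it through the chain construction.
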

This can be used to produce decidable models of $C\XF$.  Recall that a field $K$ is \emph{decidable} if there is an algorithm that decides whether a first-order sentence is true in $K$.  If a field is decidable, then Hilbert's 10th problem is solvable over $K$---there is an algorithm to tell whether a given system of equations has a solution in $K$.  There are many examples of decidable fields, including algebraically closed, separably closed, real closed, and $p$-adically closed fields. In fact, most ``logically tame'' theories of fields have (some) decidable models, and the same happens with $C$XF.  For example, if $K_0 = \Qq$ and $\Abs(M) =
\Qq$, then $\Th(M)$ is decidable because it is computably axiomatized by $C\XF$ plus the condition ``$\Abs(M) = \Qq$'' (see the proof of Theorem~\ref{thm:fermat-example}).

One can also ask whether the incomplete theory $C$XF is decidable.  This question---which we do not resolve---is related to the effective Mordell problem in number theory; see Theorem~\ref{thm:mordell}.

Second, we characterize whether the models of $C\XF$ are large.  We let
$\tilde{C}$ denote the projective completion of $C$, that is, the
projective smooth curve that is birational to $C$.
\begin{theorem}[{= Theorem~\ref{large-characterize}}]
  Suppose $M \models C\XF$.  Then $M$ is large if and only if
  $\tilde{C}(M) = \varnothing$.
\end{theorem}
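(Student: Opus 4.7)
The plan is to handle the two directions separately; the forward direction is a short density argument, while the converse reduces via Pop's theorem to checking that $M((t)) \models T_C$.

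For the forward direction, suppose $M$ is large; I want to show $\tilde{C}(M) = \varnothing$. If instead $\tilde{C}(M) \neq \varnothing$, then since $\tilde{C}$ is a smooth projective curve with an $M$-rational point, largeness of $M$ forces $\tilde{C}(M)$ to be infinite. On the other hand, $\tilde{C} \setminus C$ is finite---empty if $C$ is already projective, and otherwise the finite set of points at infinity---so
\[
C(M) \;=\; \tilde{C}(M) \setminus (\tilde{C}\setminus C)(M)
\]
is infinite, contradicting $M \models T_C$.

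For the converse, suppose $\tilde{C}(M) = \varnothing$. I would invoke Pop's characterization of largeness: a field $M$ is large if and only if $M \prec_\exists M((t))$. Since $M \models C\XF$ is existentially closed among models of $T_C$, and $M((t))$ is a field extension of $M$ containing $K_0$, it then suffices to verify $M((t)) \models T_C$, i.e.\ $C(M((t))) = \varnothing$. As $C$ is open in $\tilde{C}$, this reduces to showing $\tilde{C}(M((t))) = \varnothing$. The latter is precisely what the valuative criterion of properness gives: any $M((t))$-point of the proper $M$-scheme $\tilde{C}_M$ extends uniquely to an $M[[t]]$-point, and reducing modulo $t$ produces an element of $\tilde{C}(M)$, which is empty by hypothesis.

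The only non-routine ingredient is Pop's equivalence between largeness and being existentially closed in the Laurent series field; granting that, both directions are one-step reductions, so I do not anticipate any real obstacle. One minor point of care is that the forward direction genuinely uses $\mathrm{genus}(\tilde{C}) \geq 2$ only to ensure that $\tilde{C} \setminus C$ is a proper subset, i.e.\ finite; the rest of the argument is characteristic-free and purely formal.
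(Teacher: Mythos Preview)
Your proof is correct. The forward direction (large $\Rightarrow$ $\tilde{C}(M) = \varnothing$) matches the paper's argument: both observe that $\tilde{C}(M)$ is finite (since $C(M) = \varnothing$ and $\tilde{C} \setminus C$ is finite), so a nonempty $\tilde{C}(M)$ directly witnesses failure of largeness.

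The converse, however, takes a genuinely different route. The paper verifies the definition of largeness directly: given a smooth curve $C_1$ over $M$ with an $M$-point, either there is a dominant rational map $C_1 \dashrightarrow C$ over $M$, in which case the $M$-point on $C_1$ maps to a point of $\tilde{C}(M)$ (contradiction), or there is no such map, in which case axiom $(\mathrm{II}_C)$ forces $C_1(M)$ to be Zariski dense and hence infinite. Your argument instead invokes Pop's characterization $M$ large $\iff M \prec_\exists M((t))$, and then checks $M((t)) \models T_C$ via the valuative criterion of properness applied to $\tilde{C}$. Both proofs are short; the paper's approach is more self-contained (no Pop theorem, no valuative criterion) and makes explicit use of axiom $(\mathrm{II}_C)$, while yours is more structural and would transfer verbatim to any e.c.\ class of fields defined by avoiding rational points on a proper variety.

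One small correction to your closing remark: the finiteness of $\tilde{C} \setminus C$ has nothing to do with the genus hypothesis---it holds simply because $C$ is a dense open subscheme of the curve $\tilde{C}$. Neither direction of this particular theorem uses $g(C) \ge 2$; the genus hypothesis is needed elsewhere, for the existence of the model companion.
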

For instance, in the Fermat curve example, $\tilde{C}(M)$ always
contains the four $\Qq$-rational points on the Fermat curve, and so the models are never large.  However, if we replace the Fermat curve with $x^4 + y^4 = -1$, then the resulting models are large.

Combining the decidability and non-largeness, we get the following:
\begin{corollary}[{= Corollary~\ref{cor:decidable-non-large}}]
  There is an infinite, decidable, non-large field $M$.
\end{corollary}
We believe this is the first known construction of a decidable non-large field.
  
Aside from (non-)largeness, models of $C\XF$ have the following field-theoretic properties:
\begin{theorem} \label{field-theoretic}
  Suppose $M \models C\XF$.
  \begin{enumerate}
  \item $M$ is Hilbertian (Theorem~\ref{Hilb}).
  \item $\Gal(M)$ is unbounded  (Theorem~\ref{thm:unbdd}).  In fact, $\Gal(M)$ is $\omega$-free (Theorem~\ref{omega-free}).
  \item If $L/M$ is a proper algebraic extension, then $L$ is a pseudo-algebraically closed (PAC) field (Theorem~\ref{PAC}).
  \item If $L/M$ is a proper finite extension, then $L$ is an
    $\omega$-free PAC field (Theorem~\ref{proper-finite-ext}).
  \end{enumerate}
\end{theorem}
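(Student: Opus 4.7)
The strategy throughout rests on two principles. First, $M$ is existentially closed in every extension $K/M$ with $C(K) = \varnothing$, so existential formulas with parameters in $M$ transfer from $K$ to $M$. Second, the genus assumption $g(C) \ge 2$ yields a rigidity principle: $C(K(V)) = C(K)$ for any geometrically integral variety $V/K$ that admits no non-constant rational map to $C_K$; in particular $C(K(t)) = C(K)$ for every transcendental $t$. Together these let us enlarge $M$ inside models of $T_C$ while controlling rational points of $C$.

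I begin with (3), which carries the main geometric weight. Let $L/M$ be a proper algebraic extension and $V/L$ geometrically integral. Replacing $L$ by a finite Galois subextension over which $V$ is defined, assume $L/M$ is finite Galois with group $G = \Gal(L/M)$. Form the Weil restriction $W := \Res_{L/M}(V)$, a geometrically integral $M$-variety with $W(M) = V(L)$. By e.c., finding some $K \supseteq M$ with $C(K) = \varnothing$ and $W(K) \ne \varnothing$ is enough; the choice $K = M(W)$ provides a $W$-point automatically, so the task reduces to showing $C(M(W)) = \varnothing$. A $C$-point in $M(W)$ is a rational map $\phi \colon W \dashrightarrow C$ over $M$, necessarily non-constant (since $C(M) = \varnothing$). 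Base-changing to $L$ gives a $G$-equivariant non-constant rational map $\phi_L \colon \prod_{\sigma \in G} V^\sigma \dashrightarrow C_L$. The crucial geometric input is a de Franchis/Albanese-type rigidity statement for rational maps from products of varieties into a curve of genus $\ge 2$: such $\phi_L$ must factor through projection onto a single factor $V^{\sigma_0}$. But $G$-equivariance then forces the same factorization through each translate $V^{\tau \sigma_0}$, incompatible with non-constancy unless $G$ is trivial, i.e., $L = M$---contradicting $L \supsetneq M$. Hence $C(M(W)) = \varnothing$ and $V(L) \ne \varnothing$.

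For Hilbertianity (1), I exploit $M \ec M(t)$, valid because $C(M(t)) = C(M) = \varnothing$. Given an irreducible $f(X, Y) \in M[X, Y]$ of $Y$-degree $n$, one wants $a \in M$ with $f(a, Y)$ irreducible over $M$; since irreducibility is not directly existential, the plan is an auxiliary encoding using the Galois closure $\tilde D \to \Aa^1$ of the cover $\{f = 0\} \to \Aa^1$, whose function field $M(\tilde D)$ remains a model of $T_C$ by the rigidity principle, combined with an existential witness certifying non-degeneracy of the specialization. For $\omega$-freeness (2), every finite embedding problem for $\Gal(M)$ is to be solved: regular embedding problems via Hilbert irreducibility from (1), and general embedding problems by building the required Galois tower inside a suitable $T_C$-extension $K/M$---e.g., the function field of a high-dimensional variety admitting no map to $C$---and descending via e.c. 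Part (4) then combines (1)--(3) with classical facts: a proper finite $L/M$ is Hilbertian by Weissauer's theorem (finite extensions of Hilbertian fields are Hilbertian) and PAC by (3); for PAC fields, Hilbertianity is equivalent to $\omega$-freeness of the absolute Galois group (Roquette; Fried--Jarden), so $L$ is $\omega$-free PAC.

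The hardest step is the rigidity argument in (3)---ruling out non-constant $G$-equivariant rational maps $\prod_\sigma V^\sigma \dashrightarrow C_L$. Executing this rigorously requires passing to a smooth projective completion of each $V^\sigma$, tracking the rational map and its $G$-equivariance through compactification, and invoking the Albanese principle that any rational map from a product of smooth projective varieties into a curve of genus $\ge 2$ factors through a single projection. A secondary obstacle is the existential encoding of ``$f(a, Y)$ irreducible of degree $n$'' needed in (1), since degree-$n$ algebraicity is a priori a $\Pi_1$-condition that one must finesse via auxiliary witnesses in the Galois closure of the cover.
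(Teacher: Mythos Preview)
Your treatment of (3) and (4) is essentially the paper's argument: Weil restriction followed by the factorization principle for rational maps from a product into a curve of genus $\ge 2$ (this is Lemma~\ref{lem:factor}). One correction: the reduction ``replace $L$ by a finite Galois subextension over which $V$ is defined'' does not work as stated, since the Galois closure over $M$ of a field of definition of $V$ need not lie inside $L$. The paper instead reduces to $L/M$ finite (not Galois) using the fact that algebraic extensions of PAC fields are PAC, then forms $\Res_{L/M}V$ and only afterward base-changes to the splitting field; your $G$-equivariance argument is a correct variant once this is repaired. Part (4) matches the paper.

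For (1) you are making life unnecessarily hard. The paper avoids any existential encoding of irreducibility by invoking the model-theoretic criterion for Hilbertianity: $K$ is Hilbertian iff there is $K' \succ K$ and $t \in K' \setminus K$ with $K(t)$ relatively algebraically closed in $K'$. Since $C(K(t)) = C(K) = \varnothing$, the extender theorem (Theorem~\ref{extender}) gives a \emph{regular} extension $K' \supseteq K(t)$ with $K' \models C\XF$, and then $K \preceq K'$ by model completeness. This is a three-line proof; the $\Pi_1$ obstacle you flag never arises.

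The genuine gap is (2). ``Descend via e.c.'' is the wrong transfer principle: a solution to an embedding problem for $\Gal(M)$ is a statement about $M^{\alg}$, not an existential sentence about $M$, so $M \ec K$ does not pull it back. The paper's proof works in a monster $\Mm \succeq M$, takes a small $K \preceq \Mm$ over which the given Galois data live, and explicitly constructs $F = L(t_h : h \in H)^H$ where $L/K$ corresponds to $\Gal(K) \twoheadrightarrow G$ and $H$ acts on $L$ through $H \to G$ and on the $t_h$ by left translation. The point is the calculation $C(F) = C(L(\bar t))^H = C(L)^H = C(K) = \varnothing$. One then embeds $F$ into $\Mm$ as a \emph{relatively algebraically closed} subfield (again via Theorem~\ref{extender}); it is this relative algebraic closedness, not e.c., that makes $\Gal(\Mm) \to \Gal(F)$ surjective and transports the solution. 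Your sketch names neither this construction nor the correct transfer mechanism.
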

For the definition of PAC fields and $\omega$-free profinite groups and more detailed discussions on them, see \cite{field-arithmetic}.
Recall that a field $K$ is \emph{virtually
large} if some finite extension of $K$ is large.
Theorem~\ref{field-theoretic}(4) shows that models of $C\XF$ are
virtually large, because $\omega$-free PAC fields are large.  This gives a new construction of virtually large
fields that are not large.  A previous construction was found by Srinivasan~\cite{Srinivasan}.

Next, we turn to the model-theoretic properties of $C\XF$:
\begin{theorem} \phantomsection \label{model-theoretic}  
  \begin{enumerate}
  \item $C\XF$ has quantifier elimination after adding the predicates
    \begin{equation*}
      Sol_n(x_0,\ldots,x_n) \iff \exists y ~ (x_ny^n + \cdots + x_1y + x_0 = 0)
    \end{equation*}
    (Corollary~\ref{cor:near-qe}).
  \item $C\XF$ is an algebraically bounded, geometric theory.  More precisely,
    \begin{enumerate}
    \item Model-theoretic algebraic closure in $C\XF$ agrees with
      field-theoretic algebraic closure over $K_0$.
    \item Model-theoretic algebraic closure satisfies exchange.
    \item The $\exists^\infty$ quantifier is uniformly eliminated.
    \end{enumerate}
    See Theorem~\ref{acl} and Corollary~\ref{exists-infty}.
  \item $C\XF$ does not eliminate imaginaries.  In fact, if $M \models
    C\XF$, then $M^\times/(M^\times)^2$ is not in interpretable
    bijection with any definable set (Remark~\ref{no-imaginaries}).
  \end{enumerate}
\end{theorem}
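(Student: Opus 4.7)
For part (1), my plan is to use the standard criterion that a model complete theory admits QE in a language $\cL'$ whenever the class of $\cL'$-substructures of its models enjoys amalgamation. First I observe that a subring $A \subseteq M$ is $Sol_n$-closed if and only if $A$ is a subfield that is relatively algebraically closed in $M$, since the $Sol_n$ predicates detect exactly which polynomials with coefficients in $A$ acquire a root in $M$. Given two models $M_1,M_2 \models C\XF$ sharing a common such subfield $A$, the fact that $M_1/A$ is a regular extension (relatively algebraically closed plus automatically separable in characteristic $0$) implies that $M_1$ and $M_2$ are linearly disjoint over $A$, so $\Frac(M_1 \otimes_A M_2)$ is a field extension of both. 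The remaining point is that this amalgamation sits inside a model of $T_C$; this is precisely the amalgamation statement that underlies the earlier existence proof of $C\XF$ and relies crucially on the genus of $C$ being at least $2$. Embedding the amalgam into an e.c.\ model and invoking model completeness of $C\XF$ finishes the QE.

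For part (2), I would deduce (2a) directly from (1). The inclusion $\Abs(A) \subseteq \acl(A)$ is automatic. Conversely, suppose $a \in M$ is transcendental over $\Abs(A)$; working in an $\omega$-saturated $M^\ast \succeq M$ and applying (1), the type of $a$ over $A$ is determined by the $Sol_n$-diagram of $\Abs_{M^\ast}(A(a))$. Hence any two field-theoretically ``generic'' transcendentals over $A$ in $M^\ast$ are conjugate, and saturation provides infinitely many such realizations, so $a \notin \acl(A)$. Part (2b) is then immediate because field-theoretic algebraic closure satisfies exchange. For (2c), I would derive uniform $\exists^{\infty}$-elimination from algebraic boundedness: (2a) together with QE reduces any definable family $\{\phi(M,\bar b)\}_{\bar b}$ with finite fibers to a family of zero loci of polynomials in $x$ whose degrees are bounded in terms of $\phi$, yielding the desired uniform bound.

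For part (3), I argue by contradiction. Suppose $X$ is a definable set in interpretable bijection with $M^\times/(M^\times)^2$; composing with the quotient map gives a definable surjection $f\colon M^\times \to X$ whose fibers are precisely the cosets $a\cdot (M^\times)^2$. The squaring map $b \mapsto b^2$ is definable and $2$-to-$1$, so by (2) its image $(M^\times)^2$ has the same dimension as $M^\times$, namely $1$; hence every fiber of $f$ has dimension $1$. Fiber-dimension additivity, available in any algebraically bounded geometric theory, then forces $\dim(X) = \dim(M^\times) - 1 = 0$, so $X$ is finite by $\exists^{\infty}$-elimination. But by Theorem~\ref{field-theoretic}, $\Gal(M)$ is $\omega$-free, so $M$ admits infinitely many quadratic extensions, making $M^\times/(M^\times)^2$ infinite. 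This is the desired contradiction.

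The main obstacle is the amalgamation step in (1): if the construction of $C\XF$ earlier in the paper does not immediately supply it, one must revisit that proof to extract the statement that any two models of $T_C$ can be amalgamated over a common relatively algebraically closed subfield inside a model of $T_C$, which is the place where $\operatorname{genus}(C) \geq 2$ really does the work. Parts (2) and (3) are then essentially formal consequences of (1) combined with the basic field-theoretic facts already collected in Theorem~\ref{field-theoretic}.
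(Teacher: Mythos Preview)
Your overall strategy matches the paper's, and you correctly identify the amalgamation lemma (that $\Frac(M_1 \otimes_A M_2)$ still avoids $C$ when $A$ is relatively algebraically closed in both $M_i$) as the crux of part~(1). That lemma is exactly Lemma~\ref{lem:regular-tensor} in the paper, proved via the factoring lemma for rational maps $V_1 \times V_2 \dashrightarrow C$ (Lemma~\ref{lem:factor}), which is where genus $\ge 2$ enters. Two points, however, need repair.

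\textbf{Part (1), substructures.} Your claim that an $\mathcal{L}_{Sol}$-substructure of $M$ is automatically a relatively algebraically closed subfield is false: the $Sol_n$ are relation symbols, so every $K_0$-subring $A$ of $M$ is an $\mathcal{L}_{Sol}$-substructure with the inherited interpretation. The amalgamation criterion therefore demands amalgamation over arbitrary subrings. The paper handles this in two steps (Corollary~\ref{cor:near-qe}): first pass to $\Frac(A)$, checking that the $Sol_n$ predicates extend correctly via the homogeneity identity $Sol_n(x_0/z,\ldots,x_n/z) \leftrightarrow Sol_n(z^{n-1}x_0,\ldots,x_n)$; then pass to the relative algebraic closure, using that two fields $M_1 \supseteq K \subseteq M_2$ with the same $Sol_n$-theory over $K$ have isomorphic relative algebraic closures of $K$ (Lemma~\ref{rel-closure-sol}). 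Only then does one amalgamate. Your sketch skips both reductions.

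\textbf{Part (2a).} The step ``any two field-theoretically generic transcendentals over $A$ are conjugate'' does not follow from QE. Two transcendentals $a,a'$ over $\Abs(A)$ can have different $Sol_n$-types over $A$: for instance $\sqrt{a}$ may lie in $M$ while $\sqrt{a'}$ does not, so $Sol_2(-a,0,1)$ distinguishes them. The paper's argument (Theorem~\ref{acl}) avoids this entirely: with $K$ relatively algebraically closed in $M$, take a copy $M'$ of $M$ over $K$ and amalgamate into $N \models C\XF$ via Lemma~\ref{lem:regular-tensor}; then $M \preceq N \succeq M'$ and $M,M'$ are $\ACF$-independent over $K$, so $M \cap M' = K$. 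Since $\acl_M(K) = \acl_N(K) = \acl_{M'}(K) \subseteq M'$, one gets $\acl_M(K) \subseteq M \cap M' = K$. This reuses precisely the amalgamation you set up in (1), so once (1) is fixed the extra work is minimal.

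Parts (2b), (2c), and (3) are fine and agree with the paper. For (3) the paper simply cites \cite[Corollary~4.2]{geom-field}, whose proof is essentially your dimension-count argument.
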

Junker and Koenigsmann~\cite{JK-slim} say that a field is \emph{very
slim} if model-theoretic and field-theoretic algebraic closure agree
in elementarily equivalent fields.  When $K_0 = \Qq$,
Theorem~\ref{model-theoretic} shows that models of $C\XF$ are very
slim.  On the other hand, when $K_0$ contains transcendentals, models
of $C\XF$ can fail to be very slim.  This gives an example of a field
that is algebraically bounded, but not very slim
(Example~\ref{eg:alg-bdd}).

The position of $C\XF$ in the classification-theoretic universe is
mostly but not completely known.  At present, we know the following:
\begin{theorem}[{= Theorem~\ref{nsop4}, Corollary~\ref{tp2}}] \label{nsop4-tp2}
  If $M \models C\XF$, then $M$ is $\mathrm{NSOP}_4$ and $\mathrm{TP}_2$.
\end{theorem}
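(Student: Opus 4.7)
The plan is to treat the two properties separately, since they go in opposite directions within the classification-theoretic hierarchy. For $\mathrm{NSOP}_4$, the natural route is to exhibit a symmetric ternary independence relation on models of $C\XF$ satisfying enough amalgamation to preclude the $4$-strong order property, while $\mathrm{TP}_2$ should follow by constructing an explicit array out of the $\omega$-free Galois group provided by Theorem~\ref{field-theoretic}(2).

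For $\mathrm{NSOP}_4$, I would define $A \ind_E B$ to mean that $\acl(AE)$ and $\acl(BE)$ (which coincide with field-theoretic algebraic closures by Theorem~\ref{model-theoretic}(2)) are linearly disjoint over $\acl(E)$, verify the routine axioms (symmetry, base monotonicity, existence, stationarity over models), and then establish a \emph{symmetric $3$-amalgamation}: three pairwise $\ind_E$-independent extensions of a common base can be amalgamated simultaneously. The $\mathrm{NSOP}_4$ criterion in the style of Conant and Mutchnik then yields the result. The heart of the argument is a free-composite lemma: given $M_0 \models C\XF$ and pairwise linearly disjoint field extensions $M_1, M_2, M_3$ of $M_0$ with $C(M_i) = \varnothing$, the iterated compositum $F = \Frac(M_1 \otimes_{M_0} M_2 \otimes_{M_0} M_3)$ still satisfies $C(F) = \varnothing$; one then passes to an existentially closed extension of $F$ inside a model of $C\XF$ to furnish the amalgam. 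The genus hypothesis $g(\tilde{C}) \ge 2$ is essential here: a putative $F$-point of $C$ would give a non-constant rational map from a product of varieties over $M_0$ into $\tilde{C}$, and rigidity of curves of genus $\ge 2$ (de Franchis--Severi) forces such a map to factor through a single coordinate, contradicting $C(M_i) = \varnothing$.

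For $\mathrm{TP}_2$, the plan is to fix a prime $\ell$ (possibly after enlarging to include a primitive $\ell$-th root of unity; one can adjust the absolute part using Theorem~\ref{thm:abs} if necessary) and choose an array of parameters so that a formula of the form $\phi(x; a, b) := \exists z\,(z^\ell = a \cdot x + b)$ is $\ell$-inconsistent along each row --- via a Kummer-theoretic calculation forcing the relevant Kummer classes to be independent modulo $\ell$-th powers --- while any vertical path remains consistent, because $\omega$-freeness of $\Gal(M)$ supplies enough room to extract all the required $\ell$-th roots simultaneously, and Hilbertianity from Theorem~\ref{field-theoretic}(1) allows the resulting witnesses to be realized back in $M$.

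The main obstacle, to which I would devote the most care, is the free-composite lemma underlying $\mathrm{NSOP}_4$: ruling out exotic points of $C$ in triple tensor products, where the interaction between linear disjointness of the $M_i$ and the geometry of $\tilde{C}$ has to be controlled delicately. By contrast, the $\mathrm{TP}_2$ half should reduce essentially to standard Kummer theory once the $\omega$-freeness of the Galois group is in hand.
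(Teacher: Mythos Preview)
Your free-composite lemma is correct and is essentially Lemma~\ref{lem:regular-tensor} of the paper (iterated): if $L_1, L_2$ are regular extensions of $K$ avoiding $C$, then so is $\Frac(L_1 \otimes_K L_2)$, because any rational map $V_1 \times V_2 \dashrightarrow C$ must factor through a projection (de Franchis--Severi, stated here as Lemma~\ref{lem:factor}). This is indeed the engine behind $\mathrm{NSOP}_4$.

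However, your $\mathrm{NSOP}_4$ plan has a real gap: the relation you propose --- linear disjointness of algebraic closures --- is \emph{not} stationary over models in $C\XF$. If $a, b$ are ACF-independent transcendentals over a model $M_0$, the compositum $\acl(M_0 a)\cdot\acl(M_0 b)$ is determined, but its relative algebraic closure in the ambient model is not: for generic $a,b$ one can realize or omit, say, $\sqrt{a+b}$, and both choices give fields avoiding $C$. By quantifier elimination in $\mathcal{L}_{Sol}$, these are distinct types $\tp(a/M_0 b)$. Without stationarity, Conant's free-amalgamation criterion for $\mathrm{NSOP}_4$ does not apply, and your $3$-amalgamation statement is a statement about fields rather than about types. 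Note too that some completions of $C\XF$ have $\mathrm{SOP}_3$ (Section~7), hence $\mathrm{SOP}_1$; so no independence relation on $C\XF$ can satisfy the full package of axioms that forces $\mathrm{NSOP}_1$, and something on your list must fail --- stationarity is the culprit.

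The paper sidesteps this by embedding $M$ as a relatively algebraically closed subfield of a highly saturated $\omega$-free PAC field $\hat{M}$, where Chatzidakis's relations $\ind^I$ and $\ind^{III}$ are available and $\ind^{III}$ \emph{is} stationary. The $4$-cycle amalgamation is carried out in $\hat{M}$: one uses her independence theorem for $\ind^I$ to produce $c_1,c_2,c_3$, uses stationarity of $\ind^{III}$ to arrange a symmetric pair $c_1c_3 \equiv c_3c_1$ so that an automorphism yields $c_4$, and then checks via the tensor-product lemma you identified that the compositum $\acl(Ac_1c_2)\acl(Ac_2c_3)\acl(Ac_3c_4)\acl(Ac_4c_1)$ still avoids $C$. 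This compositum then embeds regularly into a model of $C\XF$, carrying the correct $\mathcal{L}_{Sol}$-quantifier-free types.

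For $\mathrm{TP}_2$, the paper's argument is much shorter than yours: any proper finite extension $L/M$ is $\omega$-free PAC (Theorem~\ref{field-theoretic}(4)), unbounded PAC fields have $\mathrm{TP}_2$ by an observation of Chatzidakis, and $L$ is interpretable in $M$. Your Kummer-theoretic route may be salvageable, but the remark about ``adjusting the absolute part via Theorem~\ref{thm:abs}'' is problematic: $\mathrm{TP}_2$ must be established in the given model $M$, not in some other completion of $C\XF$.
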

\begin{theorem}[{= Theorem~\ref{thm:some-sop3}}] \label{sop3}
  There is \emph{some} field $K_0$, \emph{some} curve $C$, and
  \emph{some} model $M \models C\XF_{K_0}$ such that $M$ has $\mathrm{SOP}_3$.
\end{theorem}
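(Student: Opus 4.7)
The plan is to obtain $\mathrm{SOP}_3$ in some model $M \models C\XF$ by interpreting a suitable finite extension. By Theorem~\ref{field-theoretic}(4), any proper finite extension $L/M$ is an $\omega$-free PAC field. Since $L = M[x]/(f(x))$ is a finite-dimensional $M$-algebra with definable field operations, $L$ is interpretable in $M$, so any formula witnessing $\mathrm{SOP}_3$ in $\Th(L)$ pulls back to one in $\Th(M)$.

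The task thus reduces to arranging $K_0$, $C$, $M$ so that the resulting $L$ has $\mathrm{SOP}_3$. Theorem~\ref{thm:abs}(2) supplies the flexibility: any algebraic extension of $K_0$ with no $C$-point may be realized as $\Abs(M)$, which in turn determines $\Abs(L)$ and hence the completion of the $\omega$-free PAC theory of $L$. One then draws on the model theory of $\omega$-free PAC fields---whose completions are axiomatized by existence of rational points on absolutely irreducible varieties---to construct a definable family of such varieties indexed by an indiscernible sequence whose rational-point pattern in $L$ witnesses the three defining conditions of $\mathrm{SOP}_3$: chain consistency, antichain inconsistency, and absence of cyclic obstructions.

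The main obstacle is this last step: producing a family whose rational-point pattern is genuinely $\mathrm{SOP}_3$, not collapsing to $\mathrm{NSOP}_3$ (e.g.\ $\mathrm{NSOP}_1$) nor rising to $\mathrm{SOP}_4$ (which Theorem~\ref{nsop4-tp2} forbids). This requires careful use of the freeness of $\Gal(L)$ to arrange independent Galois covers whose intersection lattice encodes the $\mathrm{SOP}_3$ pattern. An alternative route, should the interpretation approach prove unwieldy, is to build $\phi(x;\bar y)$ and the indiscernible sequence $(\bar a_i)_{i<\omega}$ directly inside $M$, using the model-companion axioms of $C\XF$ together with the completion-classification of Theorem~\ref{thm:abs}(1) to realize a tailored tree of parameters compatible with the axiom $C(M) = \varnothing$, and to verify $\mathrm{SOP}_3$ in $M$ without reference to the finite extension $L$.
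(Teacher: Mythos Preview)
Your proposal is not a proof but a strategy with the crucial step left undone. You correctly observe that any proper finite extension $L/M$ is $\omega$-free PAC and interpretable in $M$, so that $\mathrm{SOP}_3$ in $L$ would yield $\mathrm{SOP}_3$ in $M$. But you then reduce the problem to showing that some $\omega$-free PAC field has $\mathrm{SOP}_3$, and you do not carry this out: the paragraph about ``careful use of the freeness of $\Gal(L)$ to arrange independent Galois covers whose intersection lattice encodes the $\mathrm{SOP}_3$ pattern'' is a hope, not an argument. Establishing $\mathrm{SOP}_3$ for $\omega$-free PAC fields is itself a delicate question---indeed, the paper remarks that strictly $\mathrm{NSOP}_4$ PAC fields were only recently constructed (by Ramsey, via coding in the Galois group), so you have traded the original problem for one of comparable difficulty and then stopped. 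Your ``alternative route'' is even vaguer.

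The paper's proof is entirely different and far more concrete. It takes $K_0 = \Qq$, the specific curve $C : x^4 + y^4 + 1 = 0$, and a model $K \models C\XF$ with $\Abs(K) = \Qq^{\alg} \cap \Rr$ (available by Theorem~\ref{completions-2}). The formula $\phi(x,y)$ asserting ``$y - x$ is a non-zero fourth power'' then witnesses $\mathrm{SOP}_3$ directly: the integers $a_i = i$ form an increasing $\phi$-chain because $\sqrt[4]{j-i}$ is a real algebraic number and hence lies in $\Abs(K) \subseteq K$; and a $3$-cycle $\phi(x_1,x_2) \wedge \phi(x_2,x_3) \wedge \phi(x_3,x_1)$ would give nonzero $t_1,t_2,t_3$ with $t_1^4 + t_2^4 + t_3^4 = 0$, producing the forbidden point $(t_1/t_3,\,t_2/t_3) \in C(K)$. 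No passage to a finite extension or to PAC fields is involved; the curve $C$ is chosen precisely so that the $3$-cycle obstruction is the equation of $C$ itself.
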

We review the definition of $\mathrm{SOP}_n$ in Section~\ref{nsop4-section} and $\mathrm{TP}_2$ in Section~\ref{sec:other-properties}.
We conjecture that Theorem~\ref{sop3} holds for \emph{any} model of
$C\XF$.  This would mean that $C\XF$ has similar
classification-theoretic properties to Henson's triangle-free graph~\cite{hensen-graph},
being $\mathrm{TP}_2$, $\mathrm{NSOP}_4$, and $\mathrm{SOP}_3$.  For a complete discussion of the $\mathrm{SOP}_n$ hierarchy, see~\cite{shelah-sop3,shelah-sop1}.

At any rate, Theorem~\ref{sop3} has the following consequence, which
seems to be new:
\begin{corollary}[{= Corollary~\ref{strict-nsop4}}]
  There is a pure field $(K,+,\cdot)$ that is strictly $\mathrm{NSOP}_4$,
  meaning that $K$ is $\mathrm{NSOP}_4$ but has $\mathrm{SOP}_3$.
\end{corollary}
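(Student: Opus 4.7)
The plan is essentially to package Theorems~\ref{nsop4-tp2} and~\ref{sop3} and then check that the classification-theoretic properties transfer to the pure-field reduct. First, I would invoke Theorem~\ref{sop3} to select specific $K_0$, $C$, and a model $M \models C\XF$ whose theory in the expanded language (the language of rings with constants for the elements of $K_0$) has $\mathrm{SOP}_3$. Simultaneously, Theorem~\ref{nsop4-tp2} guarantees that this same $M$ is $\mathrm{NSOP}_4$. The field $(M,+,\cdot)$, viewed purely in $\clring$, is then my candidate.

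The remaining task is to pass from the expanded structure on $M$ down to the pure field. The $\mathrm{NSOP}_4$ side is immediate: any $\clring$-formula that witnessed $\mathrm{SOP}_4$ in the pure-field reduct would, read verbatim inside the richer expansion, still witness $\mathrm{SOP}_4$ there, contradicting Theorem~\ref{nsop4-tp2}. For $\mathrm{SOP}_3$, the witnessing formula supplied by Theorem~\ref{sop3} mentions only finitely many constants from $K_0$; I would absorb those constants into the parameter tuple, obtaining a genuine $\clring$-formula with parameters in $M$ that still produces an $\mathrm{SOP}_3$-chain. This uses the standard convention that $\mathrm{SOP}_n$ is a property of formulas allowed to carry parameters.

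I do not foresee a serious technical obstacle here: the mathematical content is housed entirely in the two cited theorems, and the reduct argument is essentially bookkeeping. The one point that deserves an explicit sentence of justification is the parameter convention, together with the observation that any single formula uses only finitely many of the new constants, so that nothing is lost in re-reading a witness for $\mathrm{SOP}_3$ as a pure-field formula with parameters. Granted this, concatenating ``$\mathrm{SOP}_3$ via Theorem~\ref{sop3}'' with ``$\mathrm{NSOP}_4$ via Theorem~\ref{nsop4-tp2}'' on the reduct $(M,+,\cdot)$ yields the required strictly $\mathrm{NSOP}_4$ pure field.
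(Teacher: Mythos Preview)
Your proposal is correct and follows the same overall logic as the paper: combine Theorem~\ref{sop3} with Theorem~\ref{nsop4-tp2}. The one difference is that your reduct argument, while valid, is unnecessary in the paper's actual construction. The instance of $\mathrm{SOP}_3$ produced in Section~7 takes $K_0 = \Qq$ and $C : x^4 + y^4 + 1 = 0$, and the paper has already remarked that when $K_0 = \Qq$ one works directly in $\clring$ with no added constants. Hence the model $M \models C\XF$ exhibiting $\mathrm{SOP}_3$ is already a pure field, and the corollary follows without any bookkeeping about absorbing constants into parameters. Your extra paragraph does no harm and would be needed if Theorem~\ref{sop3} were only known abstractly, but once one sees the concrete witness it can be dropped.
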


\subsection*{Conventions}
A \emph{variety} is a reduced separated scheme of finite type over a field
$K$, not necessarily integral.  We will restrict our attention to characteristic 0, so reducedness implies geometric reducedness \cite[Tag~020I]{stacks-project}.  
  If $V$ is a variety over $K$, and $L$ extends
$K$, then $V(L)$ denotes the set of $L$-points on $V$.  A \emph{curve}
over $K$ is a smooth and geometrically integral $K$-variety of
dimension 1, not necessarily projective. 
If $V$ is an integral $K$-variety, then $K(V)$
denotes the function field.

We use $\deg(f)$ or $\deg(X/Y)$ to denote the degree of a dominant rational map
$f:X\dashrightarrow Y$ between integral varieties, which is the degree of the field extension
$[K(X):K(Y)]$ given by $f$. For projective varieties $X\subseteq
\Pp^n$, we write the
projective degree as $\pdeg(X)$.  Then $\pdeg(X)$ is the number of
intersections of $X$ with a generic projective subspace of codimension
equal to $\dim(X)$.

For an introduction to model theory, see~\cite{Hodges}.  A
\emph{theory} is a collection of sentences, not necessarily complete
or deductively closed.  The \emph{language of rings} is
$\clring=\{0,1,+,\cdot,-\}$.  ``Definable'' means ``definable with
parameters,'' and ``0-definable'' means ``definable without
parameters.'' Lastly, we use $\acl(-)$ to denote the model-theoretic algebraic closure and $(-)^\alg$ to denote the field-theoretic algebraic closure.

\section{Existence of the model companion}\label{sec:axiom}

We only consider fields of characteristic zero from this point on.
Let $K_0$ be a field and $C$ be a curve over $K_0$ with genus
$g(C)\geq 2$, such that $C(K_0)$ is empty.  By a \emph{$K_0$-field},
we mean a field extending $K_0$.  We consider $K_0$-fields as
$\cL(K_0)$-structures, where $\cL(K_0)$ is the expansion of $\clring$
with constants for $K_0$.  Say that a $K_0$-field $K$ \emph{avoids}
$C$ if $C(K_0) = \varnothing$.  Let $T_{C,K_0}$ denote the theory of
$K_0$-fields avoiding $C$.  This is clearly an inductive theory,
meaning that unions of chains of models are models.
\begin{theorem} \label{thm:cxf-exist}
  The theory $T_{C,K_0}$ has a model companion, which we call $C\XF_{K_0}$ or $C\XF$.
\end{theorem}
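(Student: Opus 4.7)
The plan is to prove Theorem~\ref{thm:cxf-exist} by exhibiting an explicit, recursively enumerable axiomatization of the class of existentially closed models of $T_C$. Since $T_C$ is inductive, if the e.c.\ models form an elementary class then those axioms constitute the model companion $C\XF$, and computability will be visible from the construction.

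First I would characterize e.c.\ models geometrically: $K \models T_C$ is e.c.\ iff for every constructible $K$-set $X$, the existence of a $T_C$-extension $L \supseteq K$ with $X(L) \neq \varnothing$ forces $X(K) \neq \varnothing$. By stratifying $X$ and using the generic point of an integral subvariety as the cheapest source of an $L$-point, this reduces to the following clean criterion: for every integral $K$-variety $V$, if some integral subvariety $W \subseteq V$ satisfies $C(K(W)) = \varnothing$, then $V(K) \neq \varnothing$. When $V$ is geometrically integral we have $K(V) \cap K^{\alg} = K$, so any constant rational map $V \to C$ would have to factor through a $K$-point of $C$, which is forbidden by $T_C$. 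Hence in the geometrically integral case, $C(K(V)) = \varnothing$ is equivalent to the non-existence of a dominant rational map $V \dashrightarrow C$; the non-geometrically-integral case reduces to this one after replacing $K$ by the algebraic closure $K'$ of $K$ in $K(V)$ and further imposing $C(K') = \varnothing$.

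The crux is to express ``no dominant rational map $V \dashrightarrow C$'' as a first-order condition in the parameters defining $V$, uniformly for $V$ of bounded complexity. This is where the hypothesis $g(C) \geq 2$ is essential. For $V$ a curve, Riemann--Hurwitz bounds the degree of any dominant $V \to C$ by $(2g(V)-2)/(2g(C)-2)$, so such maps lie in a parameter family of bounded dimension and their existence is first-order in $V$'s parameters. For higher-dimensional $V$, one either reduces to the curve case by a Bertini-type argument (restricting a dominant map to a generic codimension-one slice of $V$) or invokes boundedness of $\Hom$-schemes into hyperbolic targets in the spirit of de Franchis and Kobayashi--Ochiai. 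Packaging this with the algebraic-closure bookkeeping above, one writes an axiom schema: for each geometric data type parameterizing integral varieties $V$ over $K$ (with the algebraic closure $K'/K$ recorded) of bounded complexity, include the axiom ``if the parameters define such a $V$ with $C(K') = \varnothing$ and no dominant rational map $V \dashrightarrow C$ over $K'$, then $V(K) \neq \varnothing$.'' Together with $T_C$, this schema axiomatizes the e.c.\ models, and the resulting theory is computably axiomatized because all the relevant bounds are effective.

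The main obstacle is the uniform first-order expressibility in the previous paragraph: controlling the ``moduli'' of dominant rational maps from an arbitrary $V$ of bounded complexity into the fixed high-genus curve $C$ tightly enough that their existence becomes a first-order condition on $V$'s parameters. Riemann--Hurwitz handles curves cleanly, but the higher-dimensional case requires more delicate boundedness arguments for morphism schemes into hyperbolic targets, with $g(C) \geq 2$ used crucially at every step.
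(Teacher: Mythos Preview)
Your strategy aligns with the paper's: characterize e.c.\ models geometrically, isolate ``no dominant rational map $V \dashrightarrow C$'' as the only non-obvious first-order condition, and bound the complexity of such maps via Riemann--Hurwitz after slicing $V$ down to a curve. The paper packages the e.c.\ characterization a bit differently, separating an explicit condition $(\mathrm{I}_C)$ on proper finite extensions (every such $L$ has $C(L) \ne \varnothing$) from a condition $(\mathrm{II}_C)$ that quantifies only over \emph{geometrically} integral $V$; this sidesteps the bookkeeping with $K' = K^{\alg} \cap K(V)$ that you sketch, which as written is imprecise (after ``replacing $K$ by $K'$'', in what sense are you still asking for $V(K) \ne \varnothing$?). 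Your unified criterion is equivalent once made precise, and the paper in fact later shows that $(\mathrm{I}_C)$ is redundant.

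Where your sketch is genuinely thin is exactly the step you flag as the main obstacle. The paper turns the Bertini-plus-Riemann--Hurwitz idea into an explicit projective-degree bound on the closure of $\Gamma_f \subseteq X \times C$, and this takes real work: pass to a smooth projective model of $V$ via Hironaka, cut by generic hyperplanes to a curve $D$, apply Riemann--Hurwitz to bound $\deg(D/C)$, bound $g_D$ in terms of $\pdeg(D) = \pdeg(X)$, and then use intersection theory in the Chow ring of $X \times C$ (via the Segre embedding) to convert the bound on $\deg(D/C)$ into a bound on $\pdeg(\overline{\Gamma_f})$. With this bound in hand, Chow varieties let one quantify over the candidate graphs. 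Two points worth noting: first, the Bertini step here is \emph{not} a reduction of axiom $(\mathrm{II}_C)$ itself to the case of curves---the paper explicitly leaves that open---but a device internal to the degree-bound computation; second, the intersection-theoretic step translating $\deg(D/C)$ back to $\pdeg(\overline{\Gamma_f})$ is the part your proposal does not address. Your alternative route via de~Franchis/Kobayashi--Ochiai gives finiteness of dominant maps but not obviously an effective degree bound, so by itself it does not yield first-orderness.
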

We divide the proof of the theorem into several steps.  First, we
characterize the e.c.\@ models of $T_{C,K_0}$.  We will need the following three facts:
\begin{fact}
\label{fact:ec}

Suppose that $V$ is an integral $K$-variety.
Then $K \prec_\exists K(V)$ if and only if $V(K)$ is Zariski dense in $V$.
\end{fact}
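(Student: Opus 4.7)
My plan is to prove both directions by exploiting the tautological (generic) $K(V)$-point $\eta \in V(K(V))$ arising from the canonical map $\Spec K(V) \to V$ onto the generic point. First I would reduce to the case where $V$ is affine by passing to a nonempty affine open $U \subseteq V$: since $V$ is integral, $K(U) = K(V)$, and $V(K)$ is Zariski dense in $V$ iff $U(K)$ is dense in $U$. This reduction is what lets me treat ``$\bar{x} \in V$'' as a quantifier-free conjunction of polynomial equations, and hence formulas like ``$\exists \bar{x} \in V : f(\bar{x}) \neq 0$'' as genuine existential $\cL(K)$-sentences.

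For the direction $K \ec K(V) \Rightarrow$ density, I would argue contrapositively. If $V(K)$ is not Zariski dense in $V$, then there exists $f \in K[V]$ with $f \not\equiv 0$ on $V$ but $f|_{V(K)} \equiv 0$. Since $V$ is integral, $f$ remains nonzero in $K(V)$, i.e.\ $f(\eta) \neq 0$, so $K(V) \models \exists \bar{x} \,(\bar{x} \in V \wedge f(\bar{x}) \neq 0)$. Existential closedness would then force a point $\bar{c} \in V(K)$ with $f(\bar{c}) \neq 0$, contradicting the choice of $f$.

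For the other direction, assume $V(K)$ is dense and take an existential $\cL(K)$-sentence $\exists \bar{y}\,\psi(\bar{a},\bar{y})$ with $\bar{a} \in K$ satisfied in $K(V)$ by some tuple $\bar{b} \in K(V)^m$. Writing each $b_i = g_i/h_i$ with $g_i, h_i \in K[V]$ and $h_i \not\equiv 0$, the equations composing $\psi(\bar{a},\bar{b})$ become identities of regular functions on the open $\{h_1 \cdots h_m \neq 0\}$, while each inequation becomes a nonzero element of $K(V)$ and hence non-vanishing on a further dense open. Irreducibility of $V$ makes the intersection $U$ of all these opens a nonempty open subset of $V$, and density of $V(K)$ supplies a point $\bar{c} \in U \cap V(K)$; specializing yields $\beta_i := g_i(\bar{c})/h_i(\bar{c}) \in K$ witnessing $\psi(\bar{a},\bar{\beta})$ in $K$.

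The main obstacle I expect is mostly notational bookkeeping: keeping track of the distinction between a regular function on $V$, its value at a $K$-point, and the corresponding element of $K(V)$. The only real mathematical content is the fact that in an irreducible variety any finite intersection of nonempty opens is nonempty, which is precisely what density of $V(K)$ converts into the desired $K$-witness; one should also remark once that passing to the affine case does not cost anything, since density and function field are both preserved.
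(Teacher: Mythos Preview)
Your proposal is correct and is the standard argument for this well-known fact. Note, however, that the paper does not give its own proof of this statement: it simply records it as Fact~\ref{fact:ec} and cites \cite[Fact 2.3]{Pop-little}. So there is no ``paper's proof'' to compare against; your write-up would serve perfectly well as a self-contained justification should one be desired.

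One small point worth tightening in a final write-up: when you pass to the backward direction, you implicitly assume $\psi$ is a conjunction of atomic and negated atomic formulas. This is harmless (any quantifier-free $\psi$ is a disjunction of such conjunctions, and a witness for the disjunction is a witness for some disjunct), but it is worth saying explicitly.
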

Here and in what follows, ``$V(K)$ is Zariski dense in $V$'' means that the natural injection from $K$-rational points to scheme-theoretic points has dense image.  Or more concretely, it means that $U(K)$ is non-empty for any non-empty open subvariety $U \subseteq V$.  Fact~\ref{fact:ec} is well-known; see~\cite[Fact 2.3]{Pop-little} for
example.
\begin{lemma}
  \label{lem:fg}
  Let $T$ be a theory of fields such that any subfield of a model is a
  model.  If $K \models T$, then the following are equivalent:
  \begin{enumerate}
  \item $K$ is an e.c.\@ model of $T$.
  \item $K$ is e.c.\@ in any finitely generated extension $L$ of $K$ with $L \models T$.
\end{enumerate}
\end{lemma}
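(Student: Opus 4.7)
The implication (1) $\Rightarrow$ (2) is immediate from the definitions: an e.c.\@ model of $T$ is by definition existentially closed in \emph{every} $T$-extension, and in particular in every finitely generated one. For the interesting direction (2) $\Rightarrow$ (1), I would take an arbitrary extension $L \supseteq K$ with $L \models T$ and an existential sentence $\phi(\bar{a}) = \exists \bar{y}\, \psi(\bar{a},\bar{y})$ with parameters $\bar{a} \in K$ and $\psi$ quantifier-free, satisfying $L \models \phi(\bar{a})$; the goal is to deduce $K \models \phi(\bar{a})$. Pick witnesses $\bar{b} \in L$ with $L \models \psi(\bar{a},\bar{b})$, and set $L_0 = K(\bar{b})$, the subfield of $L$ generated over $K$ by the finite tuple $\bar{b}$. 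Then $L_0$ is a finitely generated field extension of $K$ sitting inside $L$.

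The key observation is then that the closure hypothesis on $T$ (every subfield of a model of $T$ is itself a model) applies to the inclusion $L_0 \subseteq L$, giving $L_0 \models T$. Since $\bar{b} \in L_0$, we still have $L_0 \models \phi(\bar{a})$, so applying hypothesis (2) to the finitely generated $T$-extension $L_0$ yields $K \models \phi(\bar{a})$. As $L$ and $\phi$ were arbitrary, this shows $K \ec L$ for every $T$-extension $L$, i.e., $K$ is an e.c.\@ model of $T$. I do not expect any real obstacle: the entire content of the lemma is the reduction from an arbitrary $T$-extension to a finitely generated subextension via the subfield-closure assumption, which makes existential formulas automatically localize to finitely generated pieces.
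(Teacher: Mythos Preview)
Your proof is correct and follows essentially the same approach as the paper: the paper also passes from an arbitrary $T$-extension $L$ to the finitely generated subextension $L' = K(\bar{a},\bar{b})$ generated by the parameters and witnesses, invokes the subfield-closure hypothesis to get $L' \models T$, and then applies (2).
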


\begin{proof}
(1) trivially implies (2).
Suppose that $L \models T$ is an extension of $K$.
Fix a quantifier-free formula $\phi(x,y)$ and $a \in K^m, b \in L^n$ such that $L \models \phi(a,b)$.
Let $L' = K(a,b)$, which is a finitely generated model of $T$.  Note that $L' \models \phi(a,b)$.
By (2) there is $b' \in K^n$ such that $K \models \phi(a,b')$.
\end{proof}
\begin{fact} \label{rational-ff}
  Let $W$ and $V$ be varieties over $K$, with $V$ integral.  Then
  $W(K(V))$ can be identified with the set of rational maps $V
  \dashrightarrow W$ over $K$.  Under this identification, $W(K)$
  corresponds to the constant maps.
\end{fact}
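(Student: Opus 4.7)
The plan is to interpret both sides as Hom-sets in the category of $K$-schemes and exhibit mutually inverse maps between them. Functorially, $W(K(V)) = \Hom_K(\Spec K(V), W)$, while a rational map $V \dashrightarrow W$ over $K$ is by definition an equivalence class of pairs $(U,f)$ with $U \subseteq V$ a dense open and $f \colon U \to W$ a $K$-morphism, two such pairs being equivalent if they agree on a common dense open. Since $V$ is integral, the generic point $\eta$ of $V$ lies in every dense open, so restricting the representing morphism $f$ to $\eta$ produces a well-defined map from rational maps to $W(K(V))$.

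For the inverse direction, I would spread out. Given $\varphi \colon \Spec K(V) \to W$ over $K$, choose an affine open $\Spec A \subseteq W$ containing the image of $\eta$; then $\varphi$ restricted to $\Spec A$ corresponds to a $K$-algebra homomorphism $A \to K(V)$. Because $A$ is finitely generated over $K$, the images of a finite set of generators all lie in $\Oo_V(V')$ for some dense affine open $V' \subseteq V$, and this gives a $K$-morphism $V' \to \Spec A \hookrightarrow W$ whose equivalence class is the desired rational map. Verifying that these two constructions are mutually inverse is routine: the round-trip starting from a rational map returns an equivalent pair because any two morphisms $V' \to W$ that agree at $\eta$ must agree on a dense open of $V'$ (use that $W$ is separated, so their equalizer is a closed subscheme of $V'$ containing the dense point $\eta$).

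For the final claim, a $K$-point $p \in W(K)$ is by definition a morphism $\Spec K \to W$, and its image under the natural map $W(K) \to W(K(V))$ is the composite $\Spec K(V) \to \Spec K \to W$ coming from $K \hookrightarrow K(V)$. Spreading this out produces exactly the morphism $V' \to \Spec K \to W$, namely the constant rational map with value $p$; conversely, a rational map is constant precisely when its representing morphism factors through $\Spec K$, which is the same as saying the corresponding element of $W(K(V))$ lies in the image of $W(K)$. The only mildly technical step in the whole argument is the spread-out construction, which is standard given that varieties are of finite type over $K$ and $V$ is integral; I do not anticipate any genuine obstacle.
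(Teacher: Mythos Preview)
Your argument is correct and is the standard scheme-theoretic proof of this well-known identification. The paper, however, does not prove this statement at all: it is recorded as a Fact with no accompanying argument, since the identification of $W(K(V))$ with rational maps $V \dashrightarrow W$ is a basic and widely used result in algebraic geometry. So there is nothing to compare against; you have simply filled in a proof the authors chose to omit.
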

We now characterize the e.c.\@ models of $T_{C,K_0}$.
\begin{proposition} \label{prop:axiom}
  $K \models T_{C,K_0}$ is e.c.\@ if and only if the following two
  conditions hold:
  \begin{itemize}
  \item[$(\mathrm{I}_C)$] If $L/K$ is any proper finite extension,
    then $C(L) \ne \varnothing$.
  \item[$(\mathrm{II}_C)$] For any geometrically integral
    variety $V/K$, either there is a dominant rational map $f : V
    \dashrightarrow C$ over $K$ or $V(K)$ is Zariski dense in $V$.
  \end{itemize}
\end{proposition}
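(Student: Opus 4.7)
The plan is to prove both directions separately. The forward direction splits naturally: $(\mathrm{I}_C)$ follows from the classical fact that an existentially closed field is relatively algebraically closed in any extension in which it is existentially closed, while $(\mathrm{II}_C)$ is extracted by testing existential-closedness against the function field $K(V)$. For the backward direction, I would use Lemma~\ref{lem:fg} to reduce to showing $K \prec_\exists L$ for every finitely generated extension $L \supseteq K$ with $L \models T_C$; then $(\mathrm{I}_C)$ will force regularity of $L/K$ and $(\mathrm{II}_C)$ together with Fact~\ref{fact:ec} will finish.

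For the forward direction, assume $K$ is e.c. To get $(\mathrm{I}_C)$, suppose a proper finite $L/K$ has $C(L) = \varnothing$. Then $L \models T_C$ extends $K$, so $K \prec_\exists L$. Picking $\alpha \in L \setminus K$ with minimal polynomial $p(x) \in K[x]$ of degree $\geq 2$, the existential sentence $\exists x\ p(x) = 0$ passes down to $K$ and contradicts irreducibility of $p$. To get $(\mathrm{II}_C)$, given a geometrically integral $V/K$ (the $0$-dimensional case $V = \Spec K$ being trivial), Fact~\ref{rational-ff} identifies $C(K(V))$ with rational maps $V \dashrightarrow C$ over $K$; each such map is either constant (coming from $C(K) = \varnothing$) or dominant, since $C$ is a curve and $V$ is geometrically integral. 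If no dominant map exists, then $C(K(V)) = \varnothing$, so $K(V) \models T_C$ and $K \prec_\exists K(V)$; by Fact~\ref{fact:ec}, $V(K)$ is Zariski dense in $V$.

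For the backward direction, assume $(\mathrm{I}_C)$ and $(\mathrm{II}_C)$, and fix a finitely generated $L/K$ with $L \models T_C$. Let $K'$ be the relative algebraic closure of $K$ in $L$. Since $L/K$ is finitely generated, $K'/K$ is finite; since $C(K') \subseteq C(L) = \varnothing$, condition $(\mathrm{I}_C)$ forces $K' = K$, i.e., $K$ is algebraically closed in $L$. In characteristic zero this means $L/K$ is a regular extension, so $L = K(V)$ for some geometrically integral $K$-variety $V$. Now $(\mathrm{II}_C)$ gives two cases: a dominant rational map $V \dashrightarrow C$ would yield (via Fact~\ref{rational-ff}) $C(L) \neq \varnothing$, contradicting $L \models T_C$; therefore $V(K)$ must be Zariski dense in $V$, and Fact~\ref{fact:ec} produces $K \prec_\exists L$, completing the argument via Lemma~\ref{lem:fg}.

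I do not anticipate a serious obstacle. The one subtlety worth flagging is the constant-or-dominant dichotomy for rational maps $V \dashrightarrow C$, which crucially uses both that $C$ is one-dimensional and that $V$ is geometrically integral (so that the image of the generic point of $V_{\bar{K}}$ in $C_{\bar{K}}$ is Galois-invariant). This observation does double duty: in the forward direction it underlies the conclusion that $C(K(V)) = \varnothing$, and in the backward direction it justifies realizing $L$ as $K(V)$ for a geometrically integral $V$ once $(\mathrm{I}_C)$ has been used to make $K$ algebraically closed in $L$.
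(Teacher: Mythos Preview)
Your proof is correct and follows essentially the same approach as the paper's own proof: both directions use Fact~\ref{fact:ec}, Fact~\ref{rational-ff}, and Lemma~\ref{lem:fg} in the same way, with $(\mathrm{I}_C)$ forcing geometric integrality of $V$ in the backward direction and the absence of dominant maps to $C$ yielding $K(V) \models T_C$ in the forward direction. Your write-up is somewhat more explicit in spots (the minimal-polynomial argument for $(\mathrm{I}_C)$, the constant-or-dominant dichotomy), but there is no substantive difference in strategy.
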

\begin{proof}
    First suppose $K$ is e.c.  For ($\mathrm{I}_C$), if $L/K$ is a
    proper finite extension then $K \not \prec_\exists L$, since $L$ contains a root of some polynomial over $K$ that has no solution in $K$. So $L
    \not \models T_{C,K_0}$, meaning that $C(L) \ne \varnothing$.  For
    ($\mathrm{II}_C$), suppose there is no dominant
    $f:V\dashrightarrow C$ over $K$.  By Fact~\ref{rational-ff}, $C(K(V))=C(K)=
    \varnothing$, and so $K(V) \models T_{C,K_0}$.  Then $K$ is e.c.\@ in
    $K(V)$, which implies that $V(K)$ is Zariski dense in $V$ by
    Fact~\ref{fact:ec}.

    Conversely, suppose that ($\mathrm{I}_C$) and
    ($\mathrm{II}_C$) hold.  Let $L \models T_{C,K_0}$ be an
    extension of $K$.  We need to show that $K$ is e.c.\@ in $L$.  By
    Lemma~\ref{lem:fg}, we may assume that $L$ is finitely generated
    over $K$.  Then $L=K(V)$ for some integral variety $V$ over $K$.
    Note that $V$ must be geometrically integral, because otherwise
    $L$ contains a proper finite extension $F$ of $K$, and then $C(L)
    \supseteq C(F) \supsetneq \varnothing$ by ($\mathrm{I}_C$),
    contradicting $L \models T_{C,K_0}$.  Moreover, there is no dominant
    rational map $f : V \dashrightarrow C$ over $K$ or else $C(L) =
    C(V(K)) \ne \varnothing$ by Fact~\ref{rational-ff}.  Therefore
    $V(K)$ is Zariski dense in $V$ by ($\mathrm{II}_C$).  By
    Fact~\ref{fact:ec}, $K$ is e.c.\@ in $K(V) = L$.
\end{proof}
To complete the proof that the model companion $C\XF$ exists (Theorem~\ref{thm:cxf-exist}), we must show that the conditions
($\mathrm{I}_C$) and ($\mathrm{II}_C$) are first-order (see Fact~\ref{mc-fact}(2)).  Condition
($\mathrm{I}_C$) is easy.  In any field $K$, there is a natural way to
interpret the family of degree $n$ field extensions, for example by
considering all possibilities for the structure coefficients.  Then it
is straightforward to express the statement that $C(L) \ne
\varnothing$ for every $L/K$ with $[L : K] = n$.

In contrast, condition ($\mathrm{II}_C$) will take more work.  We
describe the proof in the remainder of this section, leaving one
detail for the following section.  For notational simplicity, we
assume that $K_0 = \Qq$.  Otherwise, $\clring$ should be replaced with
$\clring(K_0)$ as appropriate.

First we introduce some machinery which isn't strictly necessary, but
is convenient for transfering statements between $K^{\alg}$ and $K$.
\begin{definition} \label{lp-def}
  Let $\mathcal{L}_P$ be $\clring$ plus a unary predicate symbol $P$,
  and let $T_P$ be the theory of pairs $(M,K)$ where $M$ is an
  algebraically closed field, $K$ is a subfield, and $M$ is \emph{not}
  a finite extension of $K$.
\end{definition}
For any field $K$, there is an extension $M$ with $(M,K) \models T_P$.
Usually we can take $M = K^\alg$, and in general we can take $M =
K(t)^\alg$, for example.
\begin{fact} \label{tp-fact}
  If $(M_i,K_i) \models T_P$ for $i = 1, 2$, then $(M_1,K_1) \equiv
  (M_2,K_2)$ if and only if $K_1 \equiv K_2$ as pure fields.
\end{fact}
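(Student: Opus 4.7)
The ``only if'' direction is immediate: given any $\clring$-sentence $\phi$, its relativization $\phi^P$ obtained by restricting every quantifier to the predicate $P$ is an $\mathcal{L}_P$-sentence with $K_i \models \phi$ iff $(M_i, K_i) \models \phi^P$, so $\mathcal{L}_P$-elementary equivalence of the pairs forces $K_1 \equiv K_2$ as pure fields.

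For the ``if'' direction, the plan is first to reduce to the case $K_1 \cong K_2$ via Keisler--Shelah, and then to observe that for a fixed subfield $K$, the $\mathcal{L}_P$-theory of any model of $T_P$ is completely determined. Concretely, Keisler--Shelah yields an ultrafilter $U$ with $K_1^U \cong K_2^U$ as pure fields; identify both with a common field $L$. The ultrapower pairs $(M_i^U, L)$ still satisfy $T_P$, and since ultrapowers are elementary it suffices to show $(M_1^U, L) \equiv (M_2^U, L)$. For this, embed $M_1^U$ and $M_2^U$ as algebraically closed subfields of a single algebraically closed extension $\Omega$ of $L$ (possible by taking $\Omega$ of sufficient transcendence degree over $L$ and mapping a transcendence basis of each $M_i^U/L$ to independent transcendentals of $\Omega$). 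Both $(M_i^U, L) \prec (\Omega, L)$ will then follow from the key lemma below, giving the required equivalence.

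\begin{lemma-star}[Key Lemma]
Let $K \subseteq M \subseteq N$ be fields with $M$ and $N$ algebraically closed and $[M:K]$ infinite. Then $(M, K) \prec (N, K)$ as $\mathcal{L}_P$-structures.
\end{lemma-star}

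The Key Lemma is where I expect the main difficulty to lie. My plan is to verify the Tarski--Vaught test: given an existential formula $\exists x\, \phi(x, \bar a)$ with $\bar a \in M$ and a witness $b \in N$, produce a witness in $M$. Witnesses of $P$-sort automatically lie in $K \subseteq M$. For $M$-sort witnesses, the key structural observation is that any nontrivial $P$-atom in $x$---a conjunct $P(t(x, \bar a))$ with $t$ non-constant in $x$---forces $x \in K^{\mathrm{alg}} \subseteq M$, since $x$ must be a root of the polynomial $t(y, \bar a) - k \in M[y]$ for some $k \in K$, and all such roots lie in $M$. In the absence of such constraints on $x$, one moves all $P$-bounded quantifiers of $\phi$ to the front of its prenex form: the remaining matrix is $\clring$-definable over $K \cup \{\bar a\} \subseteq M$, and the existence of a witness in $N$ versus $M$ is equivalent by model completeness of $\ACF$. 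The main technical obstacle is handling the inductive step through alternating $\exists_M$--$\forall_P$ quantifier patterns; here Noetherianity of the Zariski topology is decisive, since an infinite intersection $\bigcap_{c \in K}\{b : \psi(b, c, \bar a)\}$ of $\ACF$-definable sets stabilizes at a finite subintersection whose solution set is the same in $M$ and in $N$ by ACF model completeness. Combined, these observations carry the induction through, yielding the Key Lemma, and with it the fact.
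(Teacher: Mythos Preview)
Your reduction via Keisler--Shelah to the case $K_1 \cong K_2 =: L$ is fine, and the Key Lemma you state is in fact true.  The gap is in your proof sketch of the Key Lemma.

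The Noetherianity step is wrong as stated.  Noetherianity of the Zariski topology gives the descending chain condition for \emph{closed} sets, not for arbitrary constructible sets.  Taking $\psi(b,c)\equiv(b\ne c)$ already breaks it: $\bigcap_{c\in K}\{b:b\ne c\}=N\setminus K$, which is no finite subintersection.  Since $\forall c\in P\,(x\ne c)$ is precisely how one says ``$x\notin K$'' in $\mathcal L_P$, this is not an edge case but the generic one.  Relatedly, ``moving all $P$-bounded quantifiers to the front of the prenex form'' is not a sound transformation: $\exists x\,\forall c\in P\,\theta(x,c)$ and $\forall c\in P\,\exists x\,\theta(x,c)$ are inequivalent in general, so the alternating $\exists_M/\forall_P$ case does not reduce to model completeness of $\ACF$ as claimed.  (A smaller slip: a $P$-atom $P(t(x,\bar a))$ with $\bar a\in M$ forces $x\in M^{\alg}=M$, not $x\in K^{\alg}$; the polynomial $t(y,\bar a)-k$ has coefficients in $M$, not in $K$.)

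The paper sidesteps all of this.  Instead of an $\mathcal L_P$ elementary-substructure argument, it passes directly to $\kappa$-saturated models $(M_i,K_i)$ of size $\kappa$.  From $K_1\equiv K_2$ one gets $K_1\cong K_2$ (an infinite definable subset of a $\kappa$-saturated structure has size $\ge\kappa$, so both $K_i$ are $\kappa$-saturated of size $\kappa$).  The hypothesis $[M_i:K_i]=\infty$ makes the partial type ``$x$ is transcendental over $P$'' finitely satisfiable, so saturation forces $\trdg(M_i/K_i)=\kappa$.  Two algebraically closed fields of equal transcendence degree over a common subfield are isomorphic over it, giving $(M_1,K_1)\cong(M_2,K_2)$ outright.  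No analysis of $\mathcal L_P$-formulas is needed.  If you want to rescue your route, the cleanest fix is to prove the Key Lemma by this same transcendence-degree count applied to saturated elementary extensions of $(M,K,\bar a)$ and $(N,K,\bar a)$ for each finite $\bar a\in M$; but at that point the common overfield $\Omega$ is no longer doing any work and you may as well run the paper's argument directly.
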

Fact~\ref{tp-fact} is an easy exercise in model theory.\footnote{Fact~\ref{tp-fact} is equivalent to Corollary~\ref{tp-cor}, which can be formulated as a statement about the natural numbers.  In particular, it is absolute.  Consequently, we may move to the constructible universe $L$.  Then we may assume that $(M_1,K_1)$ and $(M_2,K_2)$ are saturated of size
$\kappa$ and that $K_1 \cong K_2$.  With some work, one can show that
$\trdg(M_i/K_i) = \kappa$.  This determines $M_i$ up to
isomorphism as an extension of $K_1$, and so $K_1 \cong K_2$ implies
$(M_1,K_1) \cong (M_2,K_2)$.}  By Fact~\ref{tp-fact} and compactness, the restriction
map $\Th(M,K) \mapsto \Th(K)$ is a homeomorphism from the space of
completions of $T_P$ to the space of completions of the theory of
fields.
\begin{corollary} \label{tp-cor}
  If $\phi$ is a sentence in the language of $T_P$, then there is a
  $\clring$-sentence $\phi'$ such that $(M,K) \models \phi \iff K
  \models \phi'$.
\end{corollary}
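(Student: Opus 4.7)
The plan is to deduce the corollary from Fact~\ref{tp-fact} by a standard Stone-duality argument. Let $S(T_P)$ denote the Stone space of completions of $T_P$, and let $S_{\mathrm{fld}}$ denote the Stone space of completions of the theory of characteristic-zero fields in $\clring$. The restriction map $r : S(T_P) \to S_{\mathrm{fld}}$ sending $\Th(M,K)$ to $\Th(K)$ is continuous, because $\mathcal{L}_P \supseteq \clring$. By Fact~\ref{tp-fact}, $r$ is injective, and it is surjective because for any field $K$ the pair $(K(t)^{\mathrm{alg}}, K)$ is a model of $T_P$. A continuous bijection between compact Hausdorff spaces is a homeomorphism, so $r$ is a homeomorphism.

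Given an $\mathcal{L}_P$-sentence $\phi$, the clopen set $[\phi] \subseteq S(T_P)$ maps under $r$ to a clopen subset $U \subseteq S_{\mathrm{fld}}$. By Stone duality, every clopen subset of $S_{\mathrm{fld}}$ is of the form $[\phi']$ for some $\clring$-sentence $\phi'$, so we may choose such a $\phi'$ with $U = [\phi']$. Unwinding the definitions, for any $(M,K) \models T_P$ we have $(M,K) \models \phi$ iff $\Th(M,K) \in [\phi]$ iff $\Th(K) = r(\Th(M,K)) \in [\phi']$ iff $K \models \phi'$, which is what we wanted.

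There is essentially no obstacle beyond Fact~\ref{tp-fact} itself, which is already justified in the footnote of the excerpt. If one prefers to avoid the explicit passage to Stone spaces, a parallel compactness argument works directly: Fact~\ref{tp-fact} implies that the two sets of complete $\clring$-theories $A_\phi = \{\Th(K) : (M,K) \models T_P \cup \{\phi\}\}$ and $A_{\neg\phi} = \{\Th(K) : (M,K) \models T_P \cup \{\neg \phi\}\}$ partition the space of completions of the theory of characteristic-zero fields, and a routine compactness argument then produces a single $\clring$-sentence $\phi'$ that is consistent with every theory in $A_\phi$ and inconsistent with every theory in $A_{\neg\phi}$, as required.
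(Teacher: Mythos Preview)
Your proof is correct and follows essentially the same Stone-duality argument the paper sketches immediately before the corollary: from Fact~\ref{tp-fact} one gets that the restriction map on spaces of completions is a homeomorphism, so clopen sets (i.e., sentences) correspond. One minor quibble: the target space should be completions of the theory of \emph{fields}, not just characteristic-zero fields, since $T_P$ does not impose a characteristic; this does not affect the argument.
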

Consequently, to express $(\mathrm{II}_C)$ via a set of
$\clring$-sentences, it suffices to express it as a set of
$\mathcal{L}_P$-sentences about the pair $(M,K) \models T_P$.  The
advantage of $(M,K)$ is that we can regard varieties over $K$ as
definable sets in $M$, in the following way.

If $M$ is an algebraically closed field, we can identify $\Pp^n(M)$
with a definable set in $M$.  Then we can identify quasiprojective
varieties $X \subseteq \Pp^n$ with the corresponding definable sets
$X(M) \subseteq \Pp^n(M)$.  The map $X \mapsto X(M)$ is faithful
because we assume that ``varieties'' are reduced.  If $K$ is a
subfield of $M$, then we can unambiguously talk about a variety $V
\subseteq \Pp^n$ being ``defined over $K$''---the model-theoretic
and algebro-geometric notions agree because we are working in characteristic
$0$.  Another consequence of the characteristic $0$ assumption is that two irreducible
varieties $V, W$ are birationally equivalent over $K$ if and only if
there is a $K$-definable generic bijection between $V$ and $W$, or
more precisely, a $K$-definable bijection $V_0 \to W_0$ for some
definable subsets $V_0 \subseteq V$ and $W_0 \subseteq W$ with
$\dim(V_0) = \dim(V)$ and $\dim(W_0) = \dim(W)$.  This is because
definable functions are piecewise algebraic in an algebraically closed field of characteristic $0$.

Next, we need a handful of facts about definability and complexity in
ACF.  Most of these facts are well-known or easy exercises, but one of
them takes more work and will be proved in the following section.
Throughout, work in an algebraically closed field $M$.

If $X \subseteq \Pp^n$ is a projective variety, we let $\pdeg(X)$
denote the projective degree of $X$.
\begin{fact} \label{pdeg-def}
  If $\{V_a\}_{a \in Y}$ is a definable family of projective
  varieties, then $\pdeg(V_a)$ depends definably on $a$.
\end{fact}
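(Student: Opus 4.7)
The plan is to express $\pdeg(V_a)$ in terms of two quantities both known to be definable in families over $\ACF$: the dimension of $V_a$, and the generic cardinality of intersections of $V_a$ with linear subspaces of complementary dimension.

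First, I would invoke the standard fact---a consequence of uniform $\exists^\infty$-elimination in $\ACF$---that in any definable family of definable sets, the dimension is a definable function of the parameter. Applied here, the map $a \mapsto \dim(V_a) \in \{-\infty, 0, 1, \ldots, n\}$ is definable, so it suffices to prove the statement separately on each definable piece $Y_d := \{a \in Y : \dim(V_a) = d\}$ and then glue.

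Next, I would use the classical geometric characterization: for a reduced projective variety $V \subseteq \Pp^n$ of dimension $d$, $\pdeg(V)$ equals the cardinality of $V \cap L$ for a generic $(n-d)$-dimensional linear subspace $L \subseteq \Pp^n$. The Grassmannian $G := G(n-d, n)$ of such subspaces is $0$-definable in $\ACF$, and the function $N(a, L) := |V_a \cap L|$ (with value $\infty$ when infinite) is a definable function whose finite values are bounded by a single integer depending only on the family, again by $\exists^\infty$-elimination. Then on $Y_d$, one has $\pdeg(V_a) = k$ if and only if the definable set $\{L \in G : N(a, L) = k\}$ has full dimension in $G$, i.e.\ is Zariski dense. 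Since dimension is itself definable in families and only finitely many values of $k$ need be considered, this yields a definable characterization of $a \mapsto \pdeg(V_a)$ on each $Y_d$.

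The main point to verify carefully is the geometric claim that the generic intersection count coincides with the projective degree even when $V_a$ is reducible or non-equidimensional; this reduces to the standard observation that a generic $(n-d)$-dimensional $L$ avoids every component of $V_a$ of dimension less than $d$ and meets each top-dimensional component transversely, in characteristic zero. I anticipate no serious technical obstacle: the argument assembles well-known $\ACF$ definability results with classical intersection theory. The subtlest point is simply ensuring that all occurrences of the word \emph{generic} can be made uniform in the parameter $a$, which is precisely what uniform $\exists^\infty$-elimination guarantees.
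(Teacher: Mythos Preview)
Your proposal is correct and follows essentially the same approach as the paper: both compute $\pdeg(V_a)$ as the generic intersection count with an $(n-d)$-dimensional linear subspace, with dimension handled separately. The only cosmetic difference is that the paper packages the ``generic $L$'' step via definability of types over the generic type of the Grassmannian, whereas you unwind this concretely by checking that $\{L : N(a,L)=k\}$ has full dimension; in $\ACF$ these are equivalent formulations of the same fact.
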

This holds because we can calculate $\pdeg(V)$ for $d$-dimensional $V$
by counting the number of intersections of $V$ with a generic
$(n-d)$-dimensional linear subspace of $\Pp^n$.  Model
theoretically, the definability follows using the definability of
types applied to the generic type of the Grassmannian of
$(n-d)$-dimensional linear subspaces of $\Pp^n$.

For a given number $d,r$ and projective space $\Pp^n$, it is a classical theorem of Chow stating that the Chow variety $\mathrm{Gr}(d,r,n)$ exists, parameterizing effective cycles of $\Pp^n$ with degree $d$ and dimension $r$.  See \cite[Chapter 21]{HarrisAG} for a proof. From this, we extract the following:
\begin{fact}[Existence of Chow varieties] \label{chow-varieties}
  Let $M$ be an algebraically closed field and $n, m$ be integers.
  There is a 0-definable family $\{V_a\}_{a \in Y}$ containing every
  projective variety $X \subseteq \Pp^n$ with $\pdeg(X) \le m$.
  Moreover, if $K$ is a subfield, then $\{V_a : a \in Y(K)\}$
  contains every projective variety $X \subseteq \Pp^n$ defined over
  $K$ with $\pdeg(X) \le m$.
\end{fact}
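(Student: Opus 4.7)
The plan is to unpack the cited construction of Chow varieties and verify the two assertions of the Fact: 0-definability of the family and the descent of rationality from $K^{\mathrm{alg}}$ to $K$.

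First, I would build $Y$ as a finite disjoint union of products of Chow varieties. For each pair $(r,d)$ with $0 \le r \le n$ and $1 \le d \le m$, the Chow variety $\mathrm{Gr}(d,r,n)$ is a projective variety defined over $\Qq$ parameterizing effective $r$-cycles in $\Pp^n$ of degree $d$. Taking $Y$ to be the finite disjoint union of all products $\prod_i \mathrm{Gr}(d_i, r_i, n)$ indexed by tuples with $\sum_i d_i \le m$ gives a 0-definable parameter space, and letting $V_a$ be the set-theoretic union of the (reduced) supports of the cycles encoded by the coordinates of $a$ gives a 0-definable family $\{V_a\}_{a \in Y}$. Every reduced projective variety $X \subseteq \Pp^n$ with $\pdeg(X) \le m$ decomposes into irreducible components $X_1 \cup \cdots \cup X_s$ of individual degrees summing to at most $m$, so $X$ appears as $V_a$ for $a$ the tuple encoding these components (each with multiplicity one).

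Second, for the descent claim, suppose $X \subseteq \Pp^n$ is defined over $K$. Then $X$ is $\Aut(K^{\mathrm{alg}}/K)$-invariant, so its irreducible $K^{\mathrm{alg}}$-components are permuted by the absolute Galois action. Grouping components into Galois orbits produces effective $K^{\mathrm{alg}}$-cycles that are individually Galois-stable, and each such cycle corresponds to a Galois-fixed point of the appropriate $\mathrm{Gr}(d,r,n)$, hence to a $K$-rational point by Galois descent on the projective variety. Concatenating these orbit-parameters gives some $a \in Y(K)$ with $V_a = X$.

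The main (and really only) obstacle is reconciling the paper's notion of variety (reduced but possibly reducible) with Chow's notion of effective cycle (a finite nonnegative $\Zz$-linear combination of irreducible subvarieties). I handle this above by allowing multiple tuples in $Y$ to encode the same underlying reduced variety, which is permitted since the Fact only asks that the family \emph{contains} every such $X$, not that the parameter be unique. An alternative route would use Hilbert schemes of bounded Hilbert polynomial (which directly parameterize closed subschemes and whose reduced locus is constructible), but that introduces its own bookkeeping. Beyond this translation, everything is classical: the Chow varieties and their universal cycles are constructed via Chow forms over $\Zz$, hence are 0-definable, and Galois descent for projective varieties over $\Qq$ is standard.
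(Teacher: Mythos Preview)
Your proposal is correct. The paper itself does not give a detailed proof of this Fact; it simply cites the classical construction of Chow varieties from Harris and, for the $K$-rationality clause, remarks that it is ``clear from the proof, or can be obtained by using elimination of imaginaries to ensure that $a$ is a code for $V_a$.'' Your argument is a faithful unpacking of the first of these two options: you build the parameter space out of Chow varieties over $\Qq$ and then run an explicit Galois-orbit argument to get $a \in Y(K)$.

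The paper's second option is worth noting as a genuinely different shortcut. Using elimination of imaginaries in ACF, one can replace the family $\{V_a\}_{a \in Y}$ by one in which $a$ is a canonical parameter for $V_a$. Then if $X = V_a$ is $K$-definable, its code $a$ lies in $\dcl(K)$, which equals $K$ in characteristic $0$. This bypasses the explicit decomposition into Galois orbits of components and the descent step on each $\mathrm{Gr}(d,r,n)$, at the cost of invoking a model-theoretic black box. Your approach is more constructive and makes the dependence on the Chow-form construction transparent; the EI route is shorter and indifferent to how the original family was built.

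One minor comment: taking \emph{products} of Chow varieties is slightly more than you need, since a single $\mathrm{Gr}(d,r,n)$ already parameterizes reducible (equidimensional) cycles. A disjoint union over dimensions would suffice for the first claim, though your product construction is convenient for the descent step because it lets you record each Galois orbit in a separate coordinate.
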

The part about $K$-definability is clear from the proof, or can be
obtained by using elimination of imaginaries to ensure that $a$ is a
code for $V_a$.
\begin{fact} \label{degree-bound}
  There is a computable function $B(x,y,z)$ with the following property.
  Let $X \subseteq \Pp^n$ and $C \subseteq \Pp^m$ be smooth projective
  integral varieties over $M = M^{\alg}$, with $C$ a curve with genus $g_C\geq 2$.  Regard $X
  \times C \subseteq \Pp^n \times \Pp^m$ as a subvariety of
  $\Pp^{(n+1)(m+1)-1}$ via the Segre embedding.  Let $f : X
  \dashrightarrow C$ be a dominant rational map.  Let $\Gamma$ be the
  Zariski closure of $\Gamma_f$ in $X \times C \subseteq
  \Pp^{(n+1)(m+1)-1}$ via the Segre embedding.  Then the projective degree $\pdeg(\Gamma)$
  (calculated inside $\Pp^{(n+1)(m+1)-1}$) is at most
  $B(\pdeg(X),\pdeg(C),\dim(X))$.
\end{fact}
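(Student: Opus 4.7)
The plan is to compute $\pdeg(\Gamma)$ via the bidegree of $\Gamma$ in $\Pp^n \times \Pp^m$ (where only two terms can be nonzero) and then reduce to a single Riemann-Hurwitz estimate on a hyperplane section of $X$. Write $a = \pdeg(X)$, $b = \pdeg(C)$, $d = \dim X$. In the Chow ring $A^*(\Pp^n \times \Pp^m) = \Zz[h_1, h_2]/(h_1^{n+1}, h_2^{m+1})$, I would expand
$$[\Gamma] \;=\; \sum_{k=0}^{d} c_k \, h_1^{n-d+k}\, h_2^{m-k},$$
where $c_k = |\Gamma \cap (L_1 \times L_2)|$ for generic linear subspaces of dimensions $n-d+k$ and $m-k$. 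The Segre embedding pulls $\Oo_{\Pp^N}(1)$ back to $h_1 + h_2$, so
$$\pdeg(\Gamma) \;=\; \sum_{k=0}^{d} \binom{d}{k}\, c_k.$$

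Next I would pin down the nonzero $c_k$ via dimension counts. A generic $L_2$ of dimension $m - k$ meets the $1$-dimensional $C$ only when $k \leq 1$, so $c_k = 0$ for $k \geq 2$. When $k = 0$, the projection $\Gamma \to X$ is birational and $c_0 = |X \cap L_1| = a$. When $k = 1$, the section $X' := X \cap L_1$ is a smooth geometrically integral curve of degree $a$ in $\Pp^n$ by Bertini (we are in characteristic $0$), and a direct count yields $c_1 = b \cdot e'$, where $e' := \deg(f|_{X'})$. Hence
$$\pdeg(\Gamma) \;=\; a + d\, b\, e'.$$

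It remains to bound $e'$ by a computable function of $a$. The restriction $f|_{X'}$ is nonconstant---for $d \geq 2$ because a generic codimension-$(d-1)$ section of $X$ is not contained in any fixed fiber of $f$, and for $d=1$ because $X = X'$---so Riemann-Hurwitz applied to $f|_{X'}: X' \to C$ together with $g_C \geq 2$ gives $2 g(X') - 2 \geq 2 e'$, i.e.\ $e' \leq g(X') - 1$. Since $X'$ is a smooth integral curve of degree $a$ spanning some $\Pp^r$ with $r \leq a$ (a nondegenerate integral curve of degree $a$ forces $r \leq a$), Castelnuovo's genus bound yields $g(X') \leq \pi(a, r) \leq \binom{a-1}{2}$. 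Altogether
$$\pdeg(\Gamma) \;\leq\; a + d\, b\, \binom{a-1}{2} \;=:\; B(a, b, d).$$
The main obstacle I expect is the genericity bookkeeping---showing that $c_k$ vanishes for $k \geq 2$, that $c_0 = a$ and $c_1 = b e'$ hold for sufficiently generic $L_1, L_2$, and that Bertini produces a smooth integral $X'$ on which $f$ restricts to a well-defined nonconstant morphism; once these geometric inputs are secured, the quantitative Castelnuovo and Riemann-Hurwitz bounds make $B$ visibly computable.
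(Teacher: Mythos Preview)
Your proposal is correct and is essentially the same argument as the paper's: both expand $(h_1+h_2)^d$ against $[\Gamma]$, observe that only the $k=0$ and $k=1$ terms survive because $C$ is a curve, identify these terms as $\pdeg(X)$ and $d\cdot\pdeg(C)\cdot\deg(D/C)$ for a generic $(d-1)$-fold hyperplane section $D$ of $X$ (your $X'$), and then bound $\deg(D/C)$ by Riemann--Hurwitz together with a genus bound on $D$. The only noteworthy differences are packaging (you work in $A^*(\Pp^n\times\Pp^m)$ directly, the paper works in $A^*(X\times C)$ via pushforward/pullback) and the specific genus bound invoked (you use Castelnuovo's $\binom{a-1}{2}$, the paper cites the Gruson--Lazarsfeld--Peskine bound $(a-1)^2$); either suffices for a computable $B$.
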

We will prove Fact~\ref{degree-bound} in the next section.
\begin{lemma} \label{colossal-waste-of-time}
    Let $X$ and $Y$ be irreducible projective varieties.  A closed subvariety $\Gamma \subseteq X \times Y$ is the Zariski closure of the graph of a rational map $f : X \dashrightarrow Y$ if and only if the following conditions all hold:
    \begin{itemize}
        \item $\Gamma$ is irreducible.
        \item For generically-many $a \in V$, the projection $\Gamma \to V$ has fiber size 1.  In other words, the definable set \[
        \{ a \in V(M) : |\Gamma \cap (\{a\} \times Y)| = 1\}\] is generic in $V$.
    \end{itemize}
    When these conditions hold, the rational map is dominant if and only if $\Gamma$ projects onto $Y$.
\end{lemma}
\begin{proof}
    First suppose $\Gamma$ is the Zariski closure of the graph $\Gamma_f$ of a morphism $f : U \to Y$ for some Zariski dense open set $U \subseteq X$.  Since $\Gamma_f$ is Zariski closed in the Zariski open subset $U \times Y \subseteq X \times Y$, we have $\Gamma \cap (U \times Y) = \Gamma_f$, and the second condition holds.  If the first condition fails, then $\Gamma$ is a union of two smaller Zariski closed sets $A \cup B$.  Restricting to $U \times Y$, we see that \[\Gamma_f = \Gamma \cap (U \times Y) = (A \cap (U \times Y)) \cup (B \cap (U \times Y)).\]  Since $\Gamma_f$ is irreducible as a variety (being isomorphic to $U$), one of $A$ or $B$ must contain $\Gamma_f$, and then contain its Zariski closure $\Gamma$, a contradiction.

    Conversely, suppose the two conditions hold.  Let $D$ be the definable set of $a \in X$ such that there is a unique $b \in Y$ such that $(a,b) \in \Gamma$.  By assumption, $D$ is generic.  Let $f_0 : D \to Y$ be the function whose graph is $\Gamma \cap (D \times Y)$.  Because we are in characteristic 0, $f_0$ is generically given by a rational function.  So there is a Zariski dense open $U \subseteq X$ such that $U \subseteq D$ and $f_0 \restriction U$ is some morphism $f : U \to Y$.  Then $\Gamma \cap (U \times Y) = \Gamma_f$.  It follows that $\Gamma$ is contained in the union of the following closed sets: (1) $(X \setminus U) \times Y$ and (2) the Zariski closure of the graph $\Gamma_f$.  Since $\Gamma$ is irreducible, it must be contained in one set or the other.  It can't be contained in $(X \setminus U) \times Y$, since it intersects the complementary set $U \setminus Y$.  Instead, $\Gamma$ must be contained in the Zariski closure of $\Gamma_f$.  But $\Gamma$ is Zariski closed and contains $\Gamma_f$, so $\Gamma$ must equal the Zariski closure of $\Gamma_f$.

    For the final point, if $f$ is non-dominant, then its image is contained in some closed subvariety $W \subseteq Y$.  As $\Gamma$ is the Zariski closure of the graph, it is contained in $X \times W$, so it does not project onto $Y$.  Conversely, suppose $\Gamma$ does not project onto $Y$.  Since the varieties are projective, hence complete, the image of the projection is some closed proper subvariety $W \subsetneq Y$.  The graph of $f$ is contained in $X \times W$, so $f$ maps $X$ into $W$, and $f$ is non-dominant.
\end{proof}
\begin{fact} \label{irreducibility-def}
  Let $\{V_a\}_{a \in Y}$ be a definable family of subsets of
  $\Pp^n(M)$.  Let $Y'$ be the set of $a \in Y$ such that $V_a$ is an
  irreducible closed subvariety of $\Pp^n$.  Then $Y'$ is definable.
\end{fact}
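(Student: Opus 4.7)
The plan is to express ``$V_a$ is an irreducible closed subvariety of $\Pp^n$'' as a conjunction of uniformly definable conditions, each handled by a standard feature of $\ACF$.

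First I would reduce to the closed case. In any uniformly definable family of constructible sets in an algebraically closed field, the Zariski closure $\overline{V_a}$ is uniformly definable in $a$, being cut out by polynomials of uniformly bounded degree in the parameters defining $V_a$. Hence ``$V_a = \overline{V_a}$'' is a definable condition and I may restrict to the sub-family where each $V_a$ is closed.

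The key observation is that a closed set $V \subseteq \Pp^n$ is irreducible if and only if (i) $V$ is nonempty, (ii) $V$ has a unique top-dimensional irreducible component, and (iii) $V$ is of pure dimension, i.e., all irreducible components of $V$ have the same dimension. Conditions (ii) and (iii) together force $V$ to consist of a single irreducible component. Now (i) is trivial; (ii) is the assertion that the Morley degree of $V_a$ is $1$, which is uniformly definable because $\ACF$ is $\omega$-stable and, for a Zariski closed set, Morley rank coincides with the Krull dimension and Morley degree with the number of top-dimensional components, both of which are uniformly definable in families; and (iii) can be phrased as the emptiness of $\{p \in V_a : \dim_p V_a < \dim V_a\}$, a definable condition since local dimension is upper semi-continuous in $p$ with constructible level sets.

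The main technical obstacle is the last item: verifying that local dimension, and hence pure dimension, is uniformly definable across the family. This ultimately rests on definability of Morley rank and its local variants in uniform families in $\omega$-stable theories. Alternatively, one can bypass local dimension entirely by first bounding $\pdeg(V_a)$ uniformly via Fact~\ref{pdeg-def}, and then transferring the question to the Chow variety produced by Fact~\ref{chow-varieties}, on which the subset corresponding to irreducible reduced cycles is a classical constructible locus; pulling this constructible condition back along the canonical map from $a$ to its parameter on the Chow variety yields the desired definability.
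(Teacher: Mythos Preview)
The paper does not actually prove this fact; it merely cites two external references (a model-theoretic argument in an appendix of the differential Chow varieties paper, and Stacks Project tag 0554 for a scheme-theoretic treatment). So there is no in-paper argument to compare against, and your sketch already goes further than the paper does.

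Your decomposition into ``closed'' + ``nonempty'' + ``Morley degree $1$'' + ``pure-dimensional'' is correct and standard. One caution: you justify uniform definability of Morley degree by appealing to $\omega$-stability, but $\omega$-stability alone does not imply that Morley degree is definable in families---that requires the definable multiplicity property, which $\ACF$ has but which is essentially the content of the fact you are proving. So that step is really a citation of a result of comparable strength rather than a reduction to something more elementary. Your alternative route through Chow varieties, using Facts~\ref{pdeg-def} and~\ref{chow-varieties} to land in a fixed Chow variety and then invoking the classical constructibility of the irreducible locus there, is more self-contained relative to the surrounding material and is closer in spirit to the geometric reference the paper cites.
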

See for example~\cite[Appendix]{diff-chow} for a model-theoretic proof or~\cite[Tag~0554]{stacks-project} for a geometric treatment. 

The celebrated theorem of Hironaka states that every variety admits
resolution of singularities in characteristic
$0$~\cite{Hironaka-resolution}.  Specifically:
\begin{fact}[Hironaka] \label{hironaka}
  If $V$ is a geometrically irreducible variety over $K$, then there
  is a smooth projective irreducible variety $V'$ over $K$, and a
  birational map $V \dashrightarrow V'$ over $K$.
\end{fact}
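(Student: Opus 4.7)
The plan is to reduce to Hironaka's theorem~\cite{Hironaka-resolution} applied to a projective compactification of $V$, and to use the fact that blowups of projective varieties remain projective to ensure that the output is projective and irreducible rather than merely smooth and proper.

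First I would pass to a projective compactification of $V$. By Nagata's compactification theorem, $V$ embeds as a dense open subvariety of a proper $K$-variety, and applying Chow's lemma then yields a projective $K$-variety $\bar V$ that is birational to $V$. Since $V$ is geometrically irreducible and a dense open of $\bar V$ is birational to $V$, the variety $\bar V$ is itself geometrically irreducible.

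Next I would apply Hironaka's theorem to $\bar V$, obtaining a smooth $K$-variety $V'$ together with a birational proper morphism $\pi : V' \to \bar V$, realized as a finite sequence of blowups along smooth closed centers contained in the singular locus. Because the blowup of a projective scheme along a closed subscheme is again projective, $V'$ is projective; it is irreducible because $\pi$ is birational onto the irreducible $\bar V$. Composing the birational map $V \dashrightarrow \bar V$ with the birational inverse of $\pi$ produces a birational map $V \dashrightarrow V'$ over $K$, as required.

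The main obstacle is Hironaka's theorem itself, whose proof is notoriously intricate; once resolution of singularities in characteristic $0$ is granted as a black box, the remaining steps reduce to routine facts about compactifications, blowups, and Chow's lemma.
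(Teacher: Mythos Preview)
The paper does not prove this statement at all: it is stated as a \emph{Fact} with a citation to Hironaka~\cite{Hironaka-resolution}, and no argument is given beyond the sentence ``The celebrated theorem of Hironaka states that every variety admits resolution of singularities in characteristic~$0$.'' Your proposal therefore goes beyond what the paper does, supplying the standard reduction from Hironaka's general resolution theorem to the specific formulation needed here (smooth \emph{projective} model, birational over~$K$).

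Your argument is correct. The chain Nagata compactification $\to$ Chow's lemma $\to$ Hironaka's resolution by blowups in smooth centers $\to$ projectivity preserved under blowup is the usual route, and each step is valid in characteristic~$0$. One could shorten it slightly in practice, since in the paper's applications $V$ arises as a quasi-projective variety anyway, so one can take the Zariski closure in projective space directly and skip Nagata and Chow; but as the paper's stated conventions do not require varieties to be quasi-projective, your more careful reduction is the right thing to write. Your closing remark that the real content is Hironaka's theorem itself, used as a black box, matches the paper's stance exactly.
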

The following fact essentially says that the class of smooth
projective varieties in $\Pp^n$ is ind-definable, i.e., a small union
of definable families.
\begin{fact} \label{families}
  Let $K$ be a subfield of the algebraically closed field $M$.  Let
  $V \subseteq \Pp^n$ be a smooth projective subvariety defined over
  $K$.  Then there is a 0-definable family $\{V_a\}_{a \in Y}$ such
  that each $V_a$ is a smooth projective subvariety of $\Pp^n$, and $V
  = V_a$ for some $a \in Y(K)$.
\end{fact}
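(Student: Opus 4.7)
The plan is to apply Fact~\ref{chow-varieties} to obtain a 0-definable family containing $V$, and then carve out the smooth members by a uniform first-order condition coming from the Jacobian criterion.

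First, set $d := \pdeg(V)$. By Fact~\ref{chow-varieties}, there is a 0-definable family $\{W_a\}_{a \in Y_0}$ of projective subvarieties of $\Pp^n$ containing every such subvariety of projective degree at most $d$, and some $a_0 \in Y_0(K)$ satisfies $V = W_{a_0}$. Now define
$$Y := \{a \in Y_0 : W_a \text{ is smooth}\} \subseteq Y_0.$$
If $Y$ is 0-definable, then $\{W_a\}_{a \in Y}$ is the desired family: each of its members is a smooth projective subvariety of $\Pp^n$ by construction, and since $V$ is smooth we have $a_0 \in Y(K)$.

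To verify 0-definability of $Y$, I would proceed as follows. The dimension function $a \mapsto \dim W_a$ is definable in ACF, for instance via the $\exists^\infty$ quantifier applied to generic coordinate projections (or alternatively from $\pdeg$ and the Hilbert polynomial, since degree is already bounded on the family by Fact~\ref{pdeg-def}). Passing to each of the $n+1$ standard affine charts of $\Pp^n$, the ideal defining $W_a$ in that chart has generators depending definably on $a$; this is built into the Chow variety parameterization behind Fact~\ref{chow-varieties}, which produces uniform defining equations via Chow forms. The rank of the Jacobian of these generators at a point $p \in W_a$ is then a definable function of $(a,p)$, and by the Jacobian criterion in characteristic $0$, $W_a$ is smooth iff this rank equals $n - \dim W_a$ at every $p$ in every chart. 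Universally quantifying over $p$ yields a first-order 0-definable condition on $a$.

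The main obstacle is ensuring uniform definability of the defining equations across the family, which is what makes the Jacobian condition first-order. If one prefers to bypass the choice of equations, one can define smoothness intrinsically: the Grassmannian of linear subspaces of $\Pp^n$ is itself a projective variety, so ``the tangent space of $W_a$ at $p$ has dimension $r$'' can be expressed definably in $(a,p,r)$, and smoothness of $W_a$ becomes the condition that at every $p \in W_a$ this dimension equals $\dim W_a$. Either route produces the required 0-definable $Y \subseteq Y_0$ and completes the proof.
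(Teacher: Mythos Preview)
Your argument is correct and, like the paper, rests on the Jacobian criterion; the paper's own ``proof'' is the single sentence that Fact~\ref{families} is easy to see using the Jacobian criterion of smoothness.

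That said, the route through Chow varieties is a genuine detour compared with what the paper almost certainly intends. Given the fixed $V$, choose homogeneous generators $f_1,\ldots,f_m \in K[x_0,\ldots,x_n]$ of its ideal, let $d_i = \deg f_i$ and $d = \dim V$, and take $Y_0$ to be the affine space of tuples $(g_1,\ldots,g_m)$ of homogeneous polynomials of degrees $d_1,\ldots,d_m$. For $a=(g_1,\ldots,g_m)$ let $W_a$ be the common zero locus in $\Pp^n$, and let $Y \subseteq Y_0$ be the (visibly 0-definable) set of $a$ for which, in every standard affine chart and at every point of $W_a$, the Jacobian of the dehomogenized $g_i$'s has rank $n-d$. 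For $a \in Y$ the scheme cut out by the $g_i$ is smooth of pure dimension $d$, hence reduced, so $W_a$ is a smooth projective subvariety; and $a_0 = (f_1,\ldots,f_m) \in Y(K)$ gives $V$. The point is that by parameterizing with explicit equations you already have the Jacobian in hand, so the issue you flag as the ``main obstacle''---recovering uniform defining equations from the Chow-form parameterization---simply does not arise. Your Chow-variety approach buys a family indexed by a canonical parameter space, but at the cost of that extra extraction step; the direct approach is more elementary and is what the paper's one-line hint is pointing at.
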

Fact~\ref{families} is easy to see using the Jacobian criterion of
smoothness.

Putting everything
together, we can express axiom $(\mathrm{II}_C)$ for $K$ by saying the following
about the pair $(M,K) \models T_P$.
\begin{quote}
  Let $n$ be an integer and $\phi(x,y)$ be an $\clring$-formula.
  Let $\{V_a\}_{a \in Y}$ be a 0-definable family in $M$ such that
  each $V_a$ is a smooth projective variety $V_a \subseteq \Pp^n$.
  For each $a \in Y(K)$ and each $b$ in $K$, if $V_a(M)$ is
  irreducible and $\phi(M,b) \cap V_a$ has the same
  dimension as $V_a$, at least one of the following happens:
  \begin{enumerate}
  \item There is a $K$-point in $\phi(M,b) \cap V_a$.
  \item There is a $K$-definable irreducible projective variety $\Gamma \subseteq
    V_a \times \tilde{C}$ with $\pdeg(\Gamma) \le B(\pdeg(V_a),\pdeg(\tilde{C}),\dim(V_a))$ such that $\Gamma(M)$
    is the Zariski closure of the graph of a dominant rational map
    from $V_a(M)$ to $\tilde{C}(M)$.
  \end{enumerate}
  In (2), $\tilde{C} \subseteq \Pp^m$ is the projective completion of
  $C$, and $B(x,y,z)$ is the computable function from
  Fact~\ref{degree-bound}.  The projective degree of $\Gamma$ is computed with respect to the Segre embedding.
\end{quote}
This is an axiom schema, with one axiom for each combination of $n$,
$\{V_a\}_{a \in Y}$, and $\phi(x,y)$.  Irreducibility is definable by
Fact~\ref{irreducibility-def}.  The projective degree $\pdeg(V_a)$ is
definable by Fact~\ref{pdeg-def}.  Definability of dimension is well-known---it holds in any strongly minimal theory.  We can quantify over the
possibilities for $\Gamma$ using Chow varieties
(Fact~\ref{chow-varieties}).  We can express the condition ``$\Gamma(M)$ is the Zariski closure of the
graph of a dominant rational map\ldots'' using Lemma~\ref{colossal-waste-of-time}, again using definability of irreducibility and definability of dimension.  Thus, everything is genuinely
first-order. 

Moreover, the axiom schema is equivalent to $(\mathrm{II}_C)$.  First suppose Condition $(\mathrm{II}_C)$ holds.  In the setting of the axiom schema, $V_a$ is a geometrically integral $K$-variety, and so there are two cases by condition $(\mathrm{II}_C)$:
\begin{itemize}
  \item $V_a(K)$ is Zariski dense in $V_a$, in the sense that $V_a(K)$ intersects any non-empty $K$-definable Zariski open set $U \subseteq V_a$.  Since $V_a$ is irreducible, it has a unique generic type, so the definable set $V_a \setminus \phi(M,b)$ has lower dimension than $V_a$.  So does its Zariski closure $\overline{V_a \setminus \phi(M,b)}$.  (Any definable set has the same dimension as its Zariski closure.)  Take $U$ to be the complementary open set $V_a \setminus \overline{V_a \setminus \phi(M,b)}$.  Then $U$ is non-empty.  It is $K$-definable because $V_a$ and $\phi(M,b)$ are.  By the Zariski density assumption, $U$ intersects $V_a(K)$.  But $U \subseteq \phi(M,b)$, so condition (1) of the axiom schema holds.
    \item There is a dominant rational map $f : V_a \dashrightarrow C$ over $K$.  Take $\Gamma$ to be the Zariski closure in $V_a \times \tilde{C}$ of the graph $\Gamma_f$.  By Fact~\ref{degree-bound}, $\pdeg(\Gamma) \le B(\pdeg(V_a),\pdeg(\tilde{C}),\dim(V_a))$.  Then condition (2) in the axiom schema holds.
\end{itemize}
Conversely, suppose the axiom schema holds.  We verify condition $(\mathrm{II}_C)$.  Suppose $V$ is a geometrically irreducible variety over $K$ without any $K$-definable dominant rational map to $C$.  We must show that $V(K)$ is Zariski dense in $V$.  By Hironaka's theorem (Fact~\ref{hironaka}), there is a $K$-definable smooth projective
variety $V' \subseteq \Pp^n$ that is birationally equivalent to $V$
over $K$.  We may replace $V$ with $V'$---this does not affect the Zariski density of $K$-rational points or the (non-)existence of a $K$-definable dominant rational map to $C$.  So we may assume that $V$ is smooth and projective.  By Fact~\ref{families}, $V$ belongs to some 0-definable
family $\{V_a\}_{a \in Y}$ of smooth projective varieties in $\Pp^n$.  Fix $a \in Y(K)$ such that $V = V_a$.  Let $U$ be a non-empty $K$-definable Zariski open subset of $V_a$.  We can write $U$ as $\phi(M,b)$ for some formula $\phi$ and parameter $b$ from $K$.  Because $V_a$ is irreducible and $U$ is a non-empty Zariski open subset, $\dim(U) = \dim(V_a)$.  Then the axiom schema applies.  Since there are no $K$-definable dominant rational maps from $V_a$ to $C$, we instead get a $K$-point in $\phi(M,b) \cap V_a =  U \cap V_a = U$.  As $U$ was an arbitrary $K$-definable Zariski open subset of $V$, we see that $V(K)$ is Zariski dense in $V$.

This completes the proof of Theorem~\ref{thm:cxf-exist}, and so the
model companion $C\XF$ exists.  From Proposition~\ref{prop:axiom}, we
see that a model of $C\XF$ is a field $K/K_0$ satisfying the following
conditions:
\begin{itemize}
\item[$(\mathrm{I}_C)$] If $L/K$ is any proper finite extension,
    then $C(L) \ne \varnothing$.
\item[$(\mathrm{II}_C)$] For any geometrically integral
    variety $V/K$, either there is a dominant rational map $f : V
    \dashrightarrow C$ over $K$ or $V(K)$ is Zariski dense in $V$.
\item[$(\mathrm{III}_C)$] $C(K) = \varnothing$
\end{itemize}
We will see later in Section~\ref{sec:other-properties} that Axiom
$(\mathrm{I}_C)$ can be omitted, as it follows from the other two
axioms.

\begin{remark} \label{rem:computable}
  The axioms of $C\XF$ are computable.  This can be seen by tracing
  through the proof.  Here are some relevant observations:
  \begin{enumerate}
  \item Condition $(\mathrm{I}_C)$ is easy to axiomatize, and the axioms
    are computable by inspection.
  \item The conversion between $\mathcal{L}_P$-sentences and
    $\clring$-sentences is computable because $T_P$ is computably
    axiomatized.
  \item In Fact~\ref{chow-varieties}, the formulas defining
    $\{V_a\}_{a \in Y}$ depend computably on $n$ and $m$, by
    inspecting the construction.
  \item Similarly, the definable sets given by Fact~\ref{pdeg-def} and
    \ref{irreducibility-def} depend in a computable way on the
    formulas defining $\{V_a\}_{a \in Y}$, and this can be seen by
    inspecting the proofs.
  \end{enumerate}
\end{remark}
\begin{question}
Recall that when axiomatizing large fields or PAC fields, one can reduce to the case of curves.  For example, a field $K$ is PAC if $C(K)$ is non-empty for any geometrically integral curve $C$ over $K$.  This condition implies the stronger statement that $V(K)$ is non-empty for any geometrically integral variety $V$ over $K$, by an argument using Bertini's theorem.

Does the same hold for $C\XF$?  More specifically, in axiom (II$_C$), can we reduce to the case where $V$ is a curve?  We could not see how to apply the usual Bertini arguments here.
\end{question}

\section{Complexity bounds of rational morphisms}\label{sec:inter}
In this section, we prove Fact~\ref{degree-bound}, bounding the
projective degree of the Zariski closure of the graph of a dominant
rational map $X \dashrightarrow C$ in terms of the projective degrees
of $X$ and $C$.  Throughout the section, we work over an algebraically closed field $K$ of characteristic 0. 

Before giving the proof, we recall some basic intersection theory.
Unless noted otherwise, the facts here are stated in \cite[Section
  A.1]{Hartshorne-AG}, and proved in \cite{Fulton-intersection}.
\begin{enumerate}
\item \emph{(Chow ring.)} To any $d$-dimensional smooth projective variety $X$ over $K$,
  there is associated a commutative graded unital ring $A(X) = \bigoplus_{i =
    0}^d A^i(X)$ called the \emph{Chow ring}.  Multplication is
  written $\alpha.\beta$ rather than $\alpha \cdot \beta$.
\item \emph{(Classes of subvarieties.)} If $V$ is an irreducible subvariety of $X$ of codimension $i$,
  then there is a corresponding element $[V] \in A^i(X)$, and these
  elements generate $A^i(X)$ as a group.  If $V$ is a reducible
  subvariety with irreducible components $V_1, \ldots, V_k$, then we
  define $[V] = \sum_{i = 1}^k [V_i]$.  The element $[X] \in A^0(X)$
  is the identity element of the ring.
\item \emph{(Classes of divisors.)} If $D$ is a divisor\footnote{Weil or Cartier---the two are
equivalent by smoothness of $X$.} $D = \sum_{i = 1}^k n_iV_i$, then we
  define $[D] = \sum_{i = 1}^k n_i[V_i] \in A^1(X)$.  Then $[D] =
  [D']$ if and only if $D$ is linearly equivalent to $D'$, and so $A^1(X)$ can be
  identified with the Picard group of $X$.
\item \emph{(Pullback maps.)} If $f : X \to Y$ is a morphism of smooth
  projective varieties, there is a graded ring homomorphism $f^* :
  A(Y) \to A(X)$ called the \emph{pullback}.  This defines a
  contravariant functor, so that $(f \circ g)^* = g^* \circ f^*$ and
  $\id^* = \id$.  The pullback map
  $A^1(Y) \to A^1(X)$ agrees with the usual pullback map on Picard
  groups.  When $\pi : X \times Y \to X$ is a projection, the
  pullback map sends $[V] \in A(X)$ to $[V \times Y] \in A(X \times
  Y)$.
\item \emph{(Pushforward maps.)} If $f : X \to Y$ is a morphism of smooth projective varieties,
  there is a graded group homomorphism $f_* : A(X) \to A(Y)$ shifting
  degrees by $\dim(Y) - \dim(X)$, called the \emph{pushforward}.  This
  defines a covariant functor, so that $(f \circ g)_* = f_* \circ g_*$
  and $\id_* = \id$.  The pushforward map sends $[V]$ to
  $\deg(V/f(V))[f(V)]$, where $\deg(V/f(V))$ is the degree of the
  function field extension $[K(V) : K(f(V))]$, or 0 if the extension
  is infinite (i.e., $\dim(f(V)) < \dim(V)$).
\item \emph{(Degree map.)} If $X$ has dimension $d$, then the
  pushforward along $X \to \Spec K$ gives a linear map
  \begin{gather*}
    A^d(X) \to \Zz \\
    \alpha \mapsto \int_X \alpha
  \end{gather*}
  characterized by the fact that $\int_X [p] = 1$ for $p$ a point.
\item \emph{(Projection formula.)} If $f : X \to Y$ is a morphism, $\alpha \in A(X)$ and $\beta \in
  A(Y)$, then the \emph{projection formula} holds:
  \begin{equation*}
    f_*(\alpha . f^*(\beta)) = f_*(\alpha).\beta.
  \end{equation*}
  If $\alpha$ and $\beta$ have suitable gradings, this implies
  \begin{equation*}
    \int_X \alpha . f^*(\beta) = \int_Y f_*(\alpha).\beta
  \end{equation*}
  which we also call the projection formula.
\item \emph{(Transverse intersections.)} Let $V_1, V_2, \ldots, V_k$ be varieties intersecting
  transversally, meaning that if $p$ is a generic point of $V_1 \cap
  \cdots \cap V_k$, then $p$ is a smooth point of each $V_i$ and the
  tangent spaces $T_p V_i$ are transverse as vector spaces within $T_p
  X$.  Then $[V_1].[V_2].\cdots.[V_k] = [V_1 \cap \cdots \cap V_k]$.
  See \cite[Proposition~8.2, Example~8.2.1, and
    Remark~8.3]{Fulton-intersection}.
\item \emph{(Projective degree.)} If $X$ is a $d$-dimensional irreducible closed subvariety of
  $\Pp^n$, and $H_1, \ldots, H_d$ are generic hyperplanes in $\Pp^n$,
  then $X, H_1, \ldots, H_d$ intersect transversally, and so
  \begin{equation*}
    [X].[H_1].\ldots.[H_d] = [X \cap H_1 \cap \cdots \cap H_d].
  \end{equation*}
  Therefore $\int_{\Pp^n} [X].[H_1].\ldots.[H_d]$ is the number of
  points in the intersection $X \cap H_1 \cap \cdots \cap H_d$, which
  is the projective degree $\pdeg(X)$.  The $H_i$ are all linearly
  equivalent to each other, so $[H_1] = [H_2] = \cdots = [H_n] =
  \alpha$ for some element $\alpha \in A^1(\Pp^n)$.  Therefore
  \begin{equation*}
    \int_{\Pp^n} \alpha^d.[X] = \pdeg(X).
  \end{equation*}
\end{enumerate}
Now we return to the setting of Fact~\ref{degree-bound}.  Suppose that $X \subseteq \Pp^n$ is an integral smooth projective variety,
$C \subseteq \Pp^m$ is a projective curve of genus at least 2, and $f
: X \dashrightarrow C$ is a dominant rational morphism.  Let $\Gamma$
be the closure of the graph of $f$.  Note that
\begin{equation*}
  \Gamma \subseteq X \times C \subseteq \Pp^n \times \Pp^m \subseteq \Pp^{(n+1)(m+1)-1}
\end{equation*}
where the final inclusion is the Segre embedding.  Our goal is to
bound $\pdeg(\Gamma)$ in terms of $\pdeg(X)$ and $\pdeg(C)$.  Let
$\alpha \in A^1(\Pp^n)$ be the class of a hyperplane, and define
$\beta \in A^1(\Pp^m)$ and $\gamma \in A^1(\Pp^{(n+1)(m+1)-1})$
similarly.  From the definition of the Segre embedding, one can see
that $\gamma$ pulls back to $\pi_1^*(\alpha) + \pi_2^*(\beta)$ in
$A^1(\Pp^n \times \Pp^m)$.  If $V$ is one of the varieties in the
following diagram
\begin{equation*}
  \xymatrix{
    & X \times C \ar[dl] \ar[d] \ar[dr] & \\ X \ar[d] & \Pp^n \times \Pp^m \ar[dl] \ar[d] \ar[dr] & C \ar[d] \\ \Pp^n & \Pp^{(n+1)(m+1)-1} & \Pp^m,
  }
\end{equation*}
let $\alpha_V$ or $\beta_V$ or $\gamma_V$ denote the pullback of
$\alpha$, $\beta$, or $\gamma$ along the arrows.  Abusing notation, we
omit the subscript when $V$ is clear from context.  Then $\alpha +
\beta = \gamma$ holds in $A(\Pp^n \times \Pp^m)$ and $A(X \times C)$.

Let $d = \dim(X) = \dim(\Gamma)$.  Take $H_1, \ldots, H_{d-1}$ generic
hyperplanes in $\Pp^n$.  By Bertini's theorem, they intersect $X$
transversally and the intersection $D := H_1 \cap \cdots \cap H_{d-1}
\cap X$ is a smooth connected curve.  If $\Omega \subseteq X$ denotes
the maximal domain of definition of $f$, then $X \setminus \Omega$ has dimension at most
$d-2$ by the valuative criterion of properness~(see \cite[Lemma 3.2]{milne_abelian} for example), so $H_1 \cap \cdots \cap H_{d-1}$ does not intersect $X
\setminus \Omega$.  Thus $D \subseteq \Omega$, and $f$ is defined on
$D$.  Let $\Gamma_D$ denote the restriction of the graph of $f$ to
$D$.  Then $\Gamma_D$ is a closed subvariety of $\Gamma$, isomorphic
to $D$.

Note that $\alpha = [H_i]$ in $A(X)$, so $\alpha = [H_i \times C]$ in
$A(X \times C)$.  The intersection
\begin{equation*}
  (H_1 \times C) \cap (H_2 \times C) \cap \cdots \cap (H_{d-1} \times
  C) \cap \Gamma
\end{equation*}
is transverse and equal to $\Gamma_D$, so
\begin{equation*}
  \alpha^{d-1}[\Gamma] = [\Gamma_D] \text{ in } A(X \times C).
\end{equation*}
Similarly,
\begin{gather*}
  \alpha^{d-1}[X] = [D] \text{ in $A(X)$ or $A(\Pp^n)$}.
\end{gather*}
Now we calculate
\begin{equation*}
  \pdeg(\Gamma) = \int_{\Pp^{(n+1)(m+1)-1}} \gamma^d [\Gamma] =
  \int_{X \times C} \gamma^d [\Gamma] = \int_{X \times C} (\alpha +
  \beta)^d [\Gamma] = \sum_{i = 0}^d \binom{d}{i} \int_{X \times C}
  \alpha^{d-i}\beta^i [\Gamma],
\end{equation*}
where the second equality holds by the projection formula, as
$[\Gamma] \in A(X \times C)$ pushes forward to $[\Gamma] \in
\Pp^{(n+1)(m+1)-1}$.  Meanwhile, $\beta^2 = 0$ in $A(C)$, because
$\beta$ has grading 1 and $A(C) = A^0(C) \oplus A^1(C)$.  Then
$\beta^2 = 0$ in $A(X \times C)$ as well, and so
\begin{equation*}
  \pdeg(\Gamma) = \int_{X \times C} \alpha^d [\Gamma] + d \int_{X
    \times C} \alpha^{d-1} \beta [\Gamma].
\end{equation*}
The first summand can be rewritten using the projection formula as
\begin{equation*}
  \int_{X \times C} \alpha^d [\Gamma] = \int_{\Pp^n} \alpha^d [X] = \pdeg(X),
\end{equation*}
because $[\Gamma] \in A(X \times C)$ pushes forward to $[X] \in
A(\Pp^n)$.

For the second summand, 
\begin{equation*}
  \int_{X \times C} \alpha^{d-1}[\Gamma]\beta = \int_{X \times C}
      [\Gamma_D]\beta = \deg(D/C) \int_{\Pp^m} \beta [C] = \deg(D/C)\pdeg(C),
\end{equation*}
because $[\Gamma_D]$ pushes forward to $\deg(D/C)[C]$ in $\Pp^m$.
Putting everything together,
\begin{equation*}
  \pdeg(\Gamma) = \pdeg(X) + d \deg(D/C) \pdeg(C).
\end{equation*}
Now let $g_C$ and $g_D$ be the genera of $C$ and $D$.  Then
\begin{equation*}
  \deg(D/C) \le \frac{g_D - 1}{g_C - 1} \le g_D - 1
\end{equation*}
by the Riemann-Hurwitz theorem, as $g_C \ge 2$ by assumption.\footnote{In order to apply Riemann-Hurwitz, we need the finite morphism $D \to C$ to be separable, and this holds because $K$ has characteristic 0.  It is precisely this step where the construction of $C\XF$ fails in positive characteristic.}  It
remains to bound $g_D$. To this end, the following bound on $g_D$ can be deduced from~\cite{GLP-bound}, see for example~\cite{glp_bound_note}.
\begin{gather*}
  g_D \le \pdeg(D)^2 - 2\pdeg(D) + 1.
\end{gather*}
Finally, $\pdeg(D) = \int_{\Pp^n} \alpha [D] = \int_{\Pp^n} \alpha (\alpha^{d-1}[X]) = \pdeg(X)$.  Thus everything can be bounded in terms of $\pdeg(X)$ and $\pdeg(C)$. Explicitly,
\[
\pdeg(\Gamma)\leq \pdeg(X)+d(\pdeg(X)^2-2\pdeg(X))\pdeg(C).
\]
This completes the proof of Fact~\ref{degree-bound}.

We thank Ofer Gabber for suggesting some of the arguments used in this section. The second author would like to thank Antoine Ducros and Micha\l{} Szachniewicz for helpful discussions on intersection theory.

\section{Properties of $C\XF$} \label{sec:other-properties}
We fix $C$ a curve of genus at least 2 over $K_0$ with $C(K_0) =
\varnothing$.
Slightly abusing notation, let $(C\XF_{K_0})_\forall$ be the theory $T_{C,K_0}$ of
$K_0$-fields that avoid $C$, meaning that $C(K) = \varnothing$.  Let $C\mathrm{XF}_{K_0}$ be the model companion of $(C\XF_{K_0})_\forall$, which exists by Theorem~\ref{thm:cxf-exist}.
  We will usually omit the subscript $K_0$ when it is clear from context, writing $C\XF$ and $C\XF_\forall$.
\begin{theorem} \label{large-characterize}
  Let $\tilde{C}$ be the projective completion of $C$.  If $K \models
  C\XF$, then $K$ is large if and only if $\tilde{C}(K) =
  \varnothing$.
\end{theorem}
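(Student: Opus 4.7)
The plan is to use axiom $(\mathrm{II}_C)$ from Proposition~\ref{prop:axiom} as the pivot between largeness and the $K$-rationality of points on $\tilde{C}$. The two conditions relate only through one key geometric input: a dominant rational map from a smooth curve to a projective curve automatically extends to a morphism defined everywhere. Both directions then become short deductions.

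For the forward direction, I would argue by contradiction: assume $K$ is large and pick a point in $\tilde{C}(K)$. Since $\tilde{C}$ is smooth, projective, and (by construction as the projective completion of a curve in the paper's sense) geometrically integral, largeness forces $\tilde{C}(K)$ to be infinite. But $\tilde{C} \setminus C$ consists of only finitely many points (the points at infinity), so $C(K) = \tilde{C}(K) \setminus (\tilde{C}\setminus C)(K)$ is also infinite, contradicting the defining axiom $(\mathrm{III}_C)$ that $C(K) = \varnothing$.

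For the reverse direction, assume $\tilde{C}(K) = \varnothing$ and let $D$ be a smooth (hence, by convention, geometrically integral) curve over $K$ with a $K$-rational point $p$. Apply axiom $(\mathrm{II}_C)$ to $D$: either $D(K)$ is Zariski dense in $D$ (hence infinite, since $D$ is one-dimensional), in which case we are done, or there is a dominant rational map $f : D \dashrightarrow C$ over $K$. Composing with $C \hookrightarrow \tilde{C}$ gives a rational map $D \dashrightarrow \tilde{C}$, and here the key geometric fact enters: since $D$ is a smooth curve and $\tilde{C}$ is projective, this rational map extends to an honest morphism $\bar{f} : D \to \tilde{C}$ defined at every point of $D$. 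Evaluating at $p$ yields $\bar{f}(p) \in \tilde{C}(K)$, contradicting the assumption $\tilde{C}(K) = \varnothing$. Hence the second alternative of $(\mathrm{II}_C)$ cannot occur, and $D(K)$ is infinite, proving largeness.

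There is no real obstacle here beyond recognizing that axiom $(\mathrm{II}_C)$ already packages the hard work: the only substantive ingredient is the classical extension theorem for rational maps from smooth curves to projective varieties, which lets us transport a $K$-point on the source $D$ to a $K$-point on $\tilde{C}$. One small point to mind in the writeup is the degenerate case where $C$ is itself already projective, so that $C = \tilde{C}$ and $\tilde{C}(K) = \varnothing$ holds automatically from $(\mathrm{III}_C)$; the reverse direction then shows every such model is large, consistent with the statement.
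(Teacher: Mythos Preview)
Your proposal is correct and follows essentially the same argument as the paper: both directions hinge on the finiteness of $\tilde{C}\setminus C$ for the forward implication and on axiom $(\mathrm{II}_C)$ together with the extension of rational maps from smooth curves to projective targets for the converse. The only cosmetic difference is that the paper phrases the forward direction as a direct argument (nonempty $\tilde{C}(K)$ is finite, hence witnesses non-largeness) rather than by contradiction.
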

\begin{proof}
  As $\tilde{C}$ contains only finitely more points than $C$,
  $\tilde{C}(K)$ is certainly finite.  If $\tilde{C}(K)$ is non-empty,
  then $\tilde{C}$ directly shows that $K$ is not large.  Conversely,
  suppose that $\tilde{C}(K)$ is empty.  We claim that $K$ is large.
  Let $C_1$ be a smooth curve over $K$ with a $K$-point.  We must show that $C_1(K)$ is infinite.
  Break into two cases:
  \begin{enumerate}
  \item There is a $K$-definable dominant rational map $C_1
    \dashrightarrow C$.  Then there is a $K$-definable morphism $C_1
    \to \tilde{C}$.  The image of any point $p \in C_1(K)$ is a point
    in $\tilde{C}(K)$, so $\tilde{C}(K) \ne \varnothing$, a
    contradiction.
  \item There is no $K$-definable dominant rational map $C_1
    \dashrightarrow C$.  By the axioms of $C\XF$, $C_1(K)$ is Zariski
    dense in $C_1$, and so $C_1(K)$ is infinite. \qedhere
  \end{enumerate}
\end{proof}
\begin{corollary} \label{cor:non-large-model-complete}
  There is a non-large model complete field.
\end{corollary}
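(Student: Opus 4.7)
The plan is to realize a non-large model complete field as a model of $C\XF$ for a suitable pair $(K_0,C)$. Since $C\XF$ is the model companion of the inductive theory $T_C$ (Theorem~\ref{thm:cxf-exist}), every model of $C\XF$ is automatically model complete. By Theorem~\ref{large-characterize}, such a model $M$ fails to be large precisely when $\tilde{C}(M)\ne \varnothing$, and the latter is automatic as soon as $\tilde{C}(K_0)\ne \varnothing$. The task therefore reduces to producing a curve $C$ of genus at least $2$ over a characteristic-zero field $K_0$ such that $C(K_0)=\varnothing$ while $\tilde{C}(K_0)\ne \varnothing$.

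For an explicit choice, take $K_0=\Qq$ and let $\tilde{C}\subseteq \Pp^2_\Qq$ be the projective Fermat quartic $X^4+Y^4=Z^4$, a smooth geometrically integral curve of genus $(4-1)(4-2)/2=3$. Its $\Qq$-rational points are exactly the four trivial ones $[\pm 1:0:1]$ and $[0:\pm 1:1]$: a point at infinity would demand a rational fourth root of $-1$, which does not exist, and the absence of any further affine rational points is the classical $n=4$ case of Fermat's last theorem. Let $C$ be the affine Fermat curve $\{x^4+y^4=1\}$ with its four rational points removed, as highlighted in the introduction. Then $C$ is a smooth geometrically integral open subvariety of $\tilde C$, it has the same projective completion $\tilde C$ and the same genus $3$, and $C(\Qq)=\varnothing$ by construction.

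Applying Theorem~\ref{thm:cxf-exist} yields the model companion $C\XF$. Any $M\models C\XF$ is model complete in $\clring(\Qq)$; since every element of $\Qq$ is $\clring$-definable over $\varnothing$, this is equivalent to model completeness of $M$ as a pure field in the ring language $\clring$. Finally, $\tilde{C}(M)\supseteq \tilde{C}(\Qq)\ne \varnothing$, so Theorem~\ref{large-characterize} gives that $M$ is not large. The only nontrivial input is the arithmetic fact that the Fermat quartic has exactly four $\Qq$-rational points, which is classical; the rest of the argument is immediate from the previously established machinery. The one point requiring minor care is the translation between $\clring(\Qq)$- and $\clring$-model completeness, but this is a triviality once one observes that rational constants are $\clring$-definable over $\varnothing$.
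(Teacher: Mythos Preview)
Your proof is correct and follows essentially the same approach as the paper: both take $K_0=\Qq$ and $C$ the affine Fermat quartic with its four trivial points removed, then invoke Theorem~\ref{large-characterize} using that $\tilde{C}(\Qq)\ne\varnothing$. You add a couple of details the paper leaves implicit---the genus computation, the appeal to Fermat's theorem for $n=4$ to ensure $C(\Qq)=\varnothing$, and the remark that $\clring(\Qq)$- and $\clring$-model completeness coincide---but the argument is the same.
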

\begin{proof}
  Take $K_0 = \Qq$.  Take $C_0$ to be the affine Fermat curve $\{(x,y)
  : x^4 + y^4 = 1\}$.  It is well-known that $C_0(\Qq)$ contains only the
  four points $\{(\pm 1, 0), (0, \pm 1)\}$.  Let $C$ be $C_0$ with
  these four points removed.  Then $C(\Qq)$ is empty, so $C\XF$
  exists.  Take $K \models C\XF$.  Then $K$ is model complete.  If
  $\tilde{C}$ is the projective completion of $C$ and $C_0$, then
  $\tilde{C}(K)$ is non-empty because it contains at least the four
  points of $C_0(\Qq)$.  By Theorem~\ref{large-characterize}, $K$ is
  not large.
\end{proof}

Next we investigate other properties of $C\mathrm{XF}$. We need the following lemma.
\begin{lemma} \label{lem:factor}
  Work in $K^{\alg}$.  If $V_1, V_2$ are integral varieties, then any rational map $f : V_1 \times V_2 \dashrightarrow C$ must
  factor through $V_1$ or $V_2$, in the sense that it arises (up to equivalence of rational maps) from a rational map $V_1 \dashrightarrow C$ or from a rational map $V_2 \dashrightarrow C$.
\end{lemma}
\begin{proof}
In the proof, we implicitly use the well-known fact that if $V$ is a variety, then every generic definable set $D \subseteq V$ contains a Zariski dense open subset $U \subseteq V$, and every definable function on a generic subset of $V$ agrees generically with a rational function on $V$ (since we are in characteristic 0).

We may assume that $V_i$'s are smooth and projective since this is a statement about rational maps. Let $\tilde{C}$ denote the smooth projective completion of $C$. Note that $\tilde{C}$ is pure in the sense of~\cite[Definition 3.1]{algebraic_hyperbolicity} by~\cite[Lemma 3.5]{algebraic_hyperbolicity}, because $\tilde{C}$ has positive genus.
For all $a$ on a generic definable subset of $V_1$, the map $f(a,y)$ is a rational map $V_2 \dashrightarrow C$. Thus by~\cite[Lemma 3.2]{algebraic_hyperbolicity}, it corresponds to a morphism $V_2\to \tilde{C}$. There are only finitely many \emph{non-constant} morphisms $V_2 \rightarrow \tilde{C}$ by
  \cite[Corollary 6.3.30]{positive-II} because $\tilde{C}$ is a smooth projective curve with genus at least 2, hence having ample cotangent bundle.  Let $\{g_1,\ldots,g_n\}$
  list all of them.  For $1 \le i \le n$, let $D_i$ be the set of $a
  \in V_1$ such that $f(a,y)$ is equivalent to $g_i(y)$, and let $D_0$
  be the set of $a \in V_1$ such that $f(a,y)$ is generically
  constant.  The sets $D_i$ are definable, and their union is generic in $V_1$.  Therefore, one of them is generic in $V_1$.

  If $D_0$ is generic in $V_1$, let $h : D_0 \to C$ be the
  definable function sending $a$ to the generic value of $f(a,y)$.
  Then $f(x,y) = h(x)$ on a generic definable subset of $V_1 \times V_2$, and $h$ is equivalent to a rational map on $V_1$.

  If $D_i$ is generic in $V_1$ for some $i \ge 1$, then $f(x,y)
  = g_i(y)$ on a generic definable subset of $V_1 \times V_2$.  In particular, $f(x,y) = g_i(y)$ as rational maps.
\end{proof}
Converting the above statement into algebra, we have the following:
\begin{lemma}\label{lem:regular-tensor}
  If $K$ is a $K_0$-field and $L_1,L_2$ are two regular extensions
  satisfying $C\XF_\forall$, then $\Frac(L_1 \otimes_K L_2) \models
  C\XF_\forall$.
\end{lemma}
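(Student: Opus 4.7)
The plan is to assume for contradiction that some point $p$ lies in $C(\Frac(L_1 \otimes_K L_2))$ and derive a point of $C(L_1)$, contradicting $L_1 \models C\XF_\forall$. The key tool is Lemma~\ref{lem:factor}, which constrains rational maps from a product variety to $C$. The main obstacle will be that the lemma is stated over $K^{\alg}$, so we will need a descent step at the end.

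First I would carry out a Noetherian reduction. Only finitely many elements of $L_1$ and $L_2$ are needed to express $p$, so I can find finitely generated intermediate extensions $K \subseteq K_i \subseteq L_i$ with $p \in C(\Frac(K_1 \otimes_K K_2))$. Subextensions of regular extensions remain regular (in characteristic $0$, this just means $K$ is algebraically closed in $K_i$), so writing $K_i = K(V_i)$ exhibits each $V_i$ as a geometrically integral $K$-variety. Then $V_1 \times V_2$ is itself geometrically integral with function field $\Frac(K_1 \otimes_K K_2)$, so $p$ becomes a rational map $f : V_1 \times V_2 \dashrightarrow C$ defined over $K$.

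Next I would rule out the constant case and apply the lemma. Since $C$ is a curve, $f$ is either constant or dominant, and a constant $f$ would yield a point in $C(K) \subseteq C(L_1) = \varnothing$. Base-changing the dominant $f$ to $K^{\alg}$ and applying Lemma~\ref{lem:factor}, the result factors through one of the two projections, say $\pi_1$. In function-field language, this says that the induced embedding $f^* : K(C) \hookrightarrow K(V_1 \times V_2)$ has its image inside $K^{\alg}(V_1)$ when everything is viewed inside $K^{\alg}(V_1 \times V_2)$.

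The hard part will be descending this factorization back to $K$. My plan is to exploit the fact that $K(V_1 \times V_2)/K(V_1)$ is itself a regular extension: since $V_2$ is geometrically integral over $K$, its base change $V_2 \times_K \Spec K(V_1)$ is geometrically integral over $K(V_1)$, and so $K(V_1)$ is algebraically closed in $K(V_1 \times V_2)$. Consequently the only subextension of $K(V_1 \times V_2)/K(V_1)$ that is algebraic over $K(V_1)$ is $K(V_1)$ itself, which forces $K^{\alg}(V_1) \cap K(V_1 \times V_2) = K(V_1)$ and hence $f^*(K(C)) \subseteq K(V_1) = K_1$. It follows that $p \in C(K_1) \subseteq C(L_1) = \varnothing$, the desired contradiction.
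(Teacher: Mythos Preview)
Your argument is correct and follows essentially the same route as the paper's proof: reduce to finitely generated extensions $K_i = K(V_i)$ with $V_i$ geometrically integral, interpret a hypothetical point of $C(\Frac(L_1 \otimes_K L_2))$ as a rational map $V_1 \times V_2 \dashrightarrow C$, and use Lemma~\ref{lem:factor} to force a factorization through a factor, contradicting $C(L_i) = \varnothing$. The paper compresses this into three sentences and leaves two points implicit that you spell out---the constant case (handled since $C(K) \subseteq C(L_1) = \varnothing$) and the descent of the factorization from $K^{\alg}$ to $K$---so your version is really an expanded form of the same proof rather than a different approach. Your descent via the identity $K^{\alg}(V_1) \cap K(V_1 \times V_2) = K(V_1)$, deduced from the regularity of $K(V_1 \times V_2)/K(V_1)$, is a clean way to make that step precise.
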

\begin{proof}
We may assume that $L_1$ and $L_2$ are finitely generated over $K$, hence $L_i=K(V_i)$ for some geometrically integral $V_i/K$. Since $L_i$ avoids $C$, there is no dominant rational map $V_i\dashrightarrow C$. By Lemma~\ref{lem:factor}, there is no dominant $f:V_1\times V_2\dashrightarrow C$. By Lemma~\ref{rational-ff}, $\mathrm{Frac}(L_1\otimes_K L_2)=K(V_1\times V_2)$ avoids $C$.
\end{proof}
From this, we can easily deduce the following, which enables us to characterize the completions of $C\XF$.
\begin{lemma}\label{lem:partial-elem}
  Let $K$ be a $K_0$-field and $L_1, L_2$ be two regular extensions of
  $K$ satisfying $C\XF$.  Then the identity map from $K$ to $K$ is
  partial elementary between the $L_i$'s.  In other words,
  $L_1\equiv_K L_2$.
\end{lemma}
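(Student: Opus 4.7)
\emph{Proof proposal.} The plan is to exhibit a single model $M\models C\XF$ into which both $L_1$ and $L_2$ embed over $K$, and then invoke model completeness of $C\XF$ to conclude $L_1 \prec M \succ L_2$ over $K$.

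First, I would form the tensor product $L_1 \otimes_K L_2$. Since $L_1/K$ is regular (it suffices for one of them to be regular), the tensor product $L_1 \otimes_K L_2$ is an integral domain, so its field of fractions $F := \Frac(L_1 \otimes_K L_2)$ is a well-defined field extending both $L_1$ and $L_2$ via the canonical maps $\ell \mapsto \ell \otimes 1$ and $\ell \mapsto 1 \otimes \ell$. Crucially, both of these maps restrict to the \emph{same} inclusion $K \hookrightarrow F$, since $k \otimes 1 = 1 \otimes k$ for $k \in K$. Thus $F$ is naturally a $K_0$-field containing $L_1$ and $L_2$ as subfields with the inclusions agreeing on $K$.

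Next, Lemma~\ref{lem:regular-tensor} applies to $L_1, L_2$ as regular extensions of $K$ satisfying $C\XF_\forall$ (since $C\XF \vdash C\XF_\forall$) and yields $F \models C\XF_\forall = T_C$. Because $C\XF$ is the model companion of $T_C$, every model of $T_C$ embeds into a model of $C\XF$; pick such an embedding $F \hookrightarrow M$ with $M \models C\XF$. Composing, we obtain $K$-embeddings $L_i \hookrightarrow M$ for $i = 1, 2$ (agreeing on $K$).

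Finally, since $L_1, L_2, M$ are all models of $C\XF$ and $C\XF$ is model complete (being a model companion), the embeddings $L_i \hookrightarrow M$ are elementary. Consequently, for any $L_{\mathrm{ring}}(K_0)$-formula $\varphi(\bar x)$ and any tuple $\bar k \in K$, we have $L_1 \models \varphi(\bar k) \iff M \models \varphi(\bar k) \iff L_2 \models \varphi(\bar k)$. This is precisely the statement $L_1 \equiv_K L_2$, completing the proof. The only non-trivial input is the domain property of $L_1 \otimes_K L_2$ (which forces us to assume regularity) together with Lemma~\ref{lem:regular-tensor}; once these are in hand, model completeness of the companion does all the work.
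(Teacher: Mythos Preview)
Your proof is correct and follows essentially the same approach as the paper: form $\Frac(L_1\otimes_K L_2)$, apply Lemma~\ref{lem:regular-tensor} to see it avoids $C$, extend to a model of $C\XF$, and invoke model completeness to get $L_1\preceq M\succeq L_2$ over $K$. The paper's version is terser but the structure and the key inputs are identical.
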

\begin{proof}
By Lemma~\ref{lem:regular-tensor}, we have at least one model $L \models C\XF_\forall$ such that the following diagram commutes, namely $L = \Frac(L_1 \otimes_K L_2)$.
\[\begin{tikzcd}
	& {L_1} && L \\
	& {} \\
	{} & K && {L_2}
	\arrow[from=1-2, to=1-4]
	\arrow[from=3-2, to=3-4]
	\arrow[from=3-2, to=1-2]
	\arrow[from=3-4, to=1-4]
\end{tikzcd}\] 
Since $C\XF_\forall$ is inductive, by Fact~\ref{mc-fact} we may enlarge $L$ and assume that $L\models C\XF$. Thus the maps $L_i\to L$ are elementary by the model completeness of $C\XF$. Hence the map $\mathrm{id}:K\to K$ is partial elementary from $L_1$ to $L_2$.
\end{proof}
Let $\Abs(M)$ denote $K_0^\alg \cap M$, the relative algebraic closure
of $K_0$ in $M$.
\begin{theorem}\label{completions-1}
Let $L_1,L_2\models C\XF$.  Then $\Abs(L_1) \cong \Abs(L_2)$ iff
$L_1\equiv L_2$.
\end{theorem}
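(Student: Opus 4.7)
The plan is to prove the two directions by quite different methods. The implication $L_1 \equiv L_2 \Rightarrow \Abs(L_1) \cong \Abs(L_2)$ is a piece of general field theory, while the converse is the substantive model-theoretic content and will follow almost immediately from Lemma~\ref{lem:partial-elem}.

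For the direction $L_1 \equiv L_2 \Rightarrow \Abs(L_1) \cong \Abs(L_2)$, I will invoke the standard Galois-theoretic fact that two subfields $F_1, F_2 \subseteq K_0^{\alg}$ are $K_0$-isomorphic iff they are $\Gal(K_0^{\alg}/K_0)$-conjugate, iff for every monic irreducible polynomial $p(x) \in K_0[x]$ the polynomial $p$ has a root in $F_1$ precisely when it has a root in $F_2$. For each such $p$, ``$p$ has a root'' is a single $\clring(K_0)$-sentence, whose truth in $L_i$ coincides with its truth in $\Abs(L_i)$. Hence $L_1 \equiv L_2$ forces $\Abs(L_1) \cong_{K_0} \Abs(L_2)$.

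For the main direction, assume a $K_0$-isomorphism $\sigma\colon \Abs(L_1) \to \Abs(L_2)$ is given. Let $K := \Abs(L_1)$ and use $\sigma$ to view $L_2$ as a $K$-field, so that after this identification both $L_1$ and $L_2$ are extensions of a common $K$. Since $K_0^{\alg} \cap L_i = \Abs(L_i) = K$, the field $K$ is algebraically closed inside each $L_i$; in characteristic $0$ this is exactly the statement that $L_i/K$ is a regular extension. Moreover $K \subseteq L_1$ implies $C(K) = \varnothing$, so $K \models C\XF_\forall$. Lemma~\ref{lem:partial-elem} then yields $L_1 \equiv_K L_2$, and in particular $L_1 \equiv L_2$.

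The main subtlety, rather than a serious obstacle, is in the first direction: one cannot simply read off $\Abs(L_i)$ up to isomorphism from the list of finite extensions of $K_0$ embedding into $L_i$, and a Galois-conjugacy argument inside $K_0^{\alg}$ is needed. The second direction is more conceptual but routine once Lemma~\ref{lem:partial-elem} is in hand, since the isomorphism $\sigma$ lets us pull the two models over a common regular base that itself avoids $C$.
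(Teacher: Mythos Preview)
Your proof is correct. The substantive direction ($\Abs(L_1)\cong\Abs(L_2)\Rightarrow L_1\equiv L_2$) is handled exactly as in the paper, via Lemma~\ref{lem:partial-elem}, after identifying the two copies of $\Abs$ so that both $L_i$ become regular extensions of a common $K$.

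For the converse direction you take a genuinely different route. The paper invokes elementary amalgamation: given $L_1\equiv L_2$, embed both elementarily into a common $L$; since elementary extensions of fields are regular, $\Abs(L_1)=K_0^{\alg}\cap L=\Abs(L_2)$ as subfields of $L$, giving the isomorphism immediately. Your argument instead appeals to the fact that two algebraic extensions of $K_0$ are $K_0$-isomorphic as soon as they realize the same irreducible polynomials over $K_0$. This is exactly the content of Lemma~\ref{rel-closure-sol}, which the paper proves separately (and remarks is ``presumably well-known, but we include a proof for lack of a reference''). So your ``standard Galois-theoretic fact'' is correct but not entirely trivial; if you want the argument to be self-contained you should either cite Lemma~\ref{rel-closure-sol} or sketch its proof. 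The amalgamation approach is slicker here because it sidesteps that lemma entirely, while your approach has the virtue of making explicit which first-order data about $L_i$ pins down $\Abs(L_i)$.
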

\begin{proof}
  If $\Abs(L_1) \cong \Abs(L_2)$, then $L_1 \equiv L_2$ by
  Lemma~\ref{lem:partial-elem}.  Conversely, if $L_1 \equiv L_2$ then there are elementary embeddings $L_1 \to M$ and $L_2 \to M$ for some structure $M$, by \cite[Lemma~4.11]{Poizat_2000}.  Then $\Abs(L_1) \cong \Abs(M) \cong \Abs(L_2)$.
\end{proof}
So the completions of $C\XF$ correspond to the possibilities for
$\Abs(M)$.  Later, we will see that $\Abs(M)$ can be any $C$-avoiding
algebraic extension of $K_0$.

The following is also immediate from Lemma~\ref{lem:partial-elem}.
\begin{corollary}\label{cor:near-qe}
  $C\XF$ has quantifier elimination if we expand by the solvability
  predicates
  \begin{equation*}
    \mathrm{Sol}_n(x_0,...,x_n)\leftrightarrow \exists y \, x_0+yx_1+...+y^nx_n=0    
  \end{equation*}  
\end{corollary}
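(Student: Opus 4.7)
The plan is to apply the standard substructure test for quantifier elimination and reduce to Lemma~\ref{lem:partial-elem}. Let $\cL^+$ denote the expansion of $\cL(K_0)$ by the predicates $\mathrm{Sol}_n$. The test says that $C\XF$ has QE in $\cL^+$ iff every $\cL^+$-isomorphism $\tau : A_1 \to A_2$ between $\cL^+$-substructures $A_i \subseteq M_i$ of models $M_i \models C\XF$ is elementary.

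Fix such data. First I would pass to fraction fields: since the values of $\mathrm{Sol}_n$ on $A_i$ determine those on $\Frac(A_i) \subseteq M_i$ by clearing denominators, $\tau$ extends canonically to an $\cL^+$-isomorphism between the subfields $K_i := \Frac(A_i) \subseteq M_i$. Without loss of generality, I will assume $A_i = K_i$ is a subfield.

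The central step is to extend $\tau$ further to an isomorphism $\tau' : L_1 \to L_2$, where $L_i := K_i^\alg \cap M_i$ is the relative algebraic closure of $K_i$ in $M_i$. The $\mathrm{Sol}_n$-preservation hypothesis says exactly that for each polynomial $p(y) \in K_1[y]$, $p$ has a root in $M_1$ (equivalently in $L_1$) iff $\tau(p) \in K_2[y]$ has a root in $M_2$ (equivalently in $L_2$). Fixing compatible algebraic closures $\Omega_i \supseteq L_i$ and any extension $\tilde{\tau} : \Omega_1 \to \Omega_2$ of $\tau$, this hypothesis translates into the statement that the closed subgroups $\tilde{\tau}(\Gal(\Omega_1/L_1))$ and $\Gal(\Omega_2/L_2)$ of $\Gal(\Omega_2/K_2)$ are contained in the same conjugates of open subgroups. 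A standard profinite-group argument (e.g.\@ in each finite quotient $\Gal(\Omega_2/K_2)/N$, plugging in $U = H_1$ and $U = H_2$ shows the images of the two subgroups are conjugate; then one patches together via the usual compactness of inverse limits of non-empty finite sets) shows these two closed subgroups are conjugate, so after modifying $\tilde{\tau}$ by an element of $\Gal(\Omega_2/K_2)$ one obtains the desired $\tau' : L_1 \to L_2$. I expect this Galois-theoretic identification to be the only real subtlety in the argument.

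Finally, identifying $L_1$ with $L_2$ via $\tau'$, both $M_1$ and $M_2$ become regular extensions of the common $K_0$-field $L := L_1 = L_2$ satisfying $C\XF$ (regularity is immediate because $L$ is relatively algebraically closed in each $M_i$, and we are in characteristic $0$). Lemma~\ref{lem:partial-elem} then yields $M_1 \equiv_L M_2$, and in particular $M_1 \equiv_{A_1} M_2$ via $\tau$. Thus $\tau$ is elementary, which is the required QE criterion.
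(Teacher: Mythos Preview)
Your proposal is correct and follows essentially the same route as the paper: reduce the substructure test to fraction fields, extend the isomorphism to the relative algebraic closures, and then invoke Lemma~\ref{lem:partial-elem}. The only difference is in the ``central step'': the paper isolates this as Lemma~\ref{rel-closure-sol} and proves it by a direct primitive-element-plus-compactness argument, whereas you give the equivalent Galois-theoretic version (the $\mathrm{Sol}_n$ hypothesis says the two closed subgroups meet the same conjugacy classes of open subgroups, hence have conjugate images in every finite quotient, hence are conjugate by profinite compactness). Both arguments are standard and yield the same conclusion; the paper's version is slightly more elementary in that it avoids setting up the Galois correspondence explicitly.
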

\begin{proof}
  This follows formally from Lemma~\ref{lem:partial-elem}.  Suppose
  $M_1, M_2$ are models of $C\XF$, and $A_i$ is a substructure
  (subring) of $M_i$ for $i = \{1,2\}$.  Suppose $f_0 : A_1 \to A_2$
  is an isomorphism in the expanded language.  By a standard
  compactness argument, it suffices to show that $f_0$ is a partial
  elementary map.  First extend $f_0$ to $f_1 : \Frac(A_1) \to
  \Frac(A_2)$.  The resulting map continues to respect the $Sol_n$
  predicates because
  \begin{equation*}
    Sol_n\left(\frac{x_0}{z},\ldots,\frac{x_{n-1}}{z} \right)    \iff Sol_n\left( z^{n-1}x_0,\ldots,z^{n-1-i}x_i,\ldots, x_{n-1}\right).
  \end{equation*}
  Next, let $K_i$ be the relative algebraic closure of $\Frac(A_i)$ in
  $M_i$.  The fact that $f_1$ preserves $Sol_n(-)$ implies that $f_1 :
  \Frac(A_1) \to \Frac(A_2)$ extends to an isomorphism $f_2 : K_1 \to
  K_2$, by Lemma~\ref{rel-closure-sol} below.  Then $f_2$ is a partial
  elementary map by Lemma~\ref{lem:partial-elem}, and so $f_0$ is also
  a partial elementary map.
\end{proof}
\begin{lemma} \label{rel-closure-sol}
  If $M_1, M_2$ are two fields extending a perfect field $K$, and if
  \begin{equation*}
    M_1 \models Sol_n(a_1,\ldots,a_n) \iff M_2 \models Sol_n(a_1,\ldots,a_n)
  \end{equation*}
  for $a_1,\ldots,a_n \in K$, then the relative algebraic closures
  $K^\alg \cap M_1$ and $K^\alg \cap M_2$ are isomorphic over $K$.
\end{lemma}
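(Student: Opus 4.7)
The plan is to reformulate the statement as a Galois-theoretic question, solve it at each finite Galois level by a short cardinality trick, and patch the pieces together by profinite compactness. Fix an algebraic closure $\Omega$ of $K$, and write $G := \Gal(\Omega/K)$; perfectness of $K$ ensures $\Omega = \ksep$. Embed $L_i := \kalg \cap M_i$ into $\Omega$. The hypothesis that $Sol_n$ agrees on $K$-parameters is equivalent to saying every $f \in K[y]$ has a root in $L_1$ iff it has one in $L_2$ (since any such root is algebraic over $K$), or equivalently, for every $G$-orbit $O \subseteq \Omega$, $O \cap L_1 \ne \varnothing \iff O \cap L_2 \ne \varnothing$. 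The goal is then to produce $\pi \in G$ with $\pi(L_1) = L_2$.

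For each finite Galois subextension $K \subseteq N \subseteq \Omega$, set $G_N := \Gal(N/K)$, $L_{i,N} := L_i \cap N$, and $H_{i,N} := \Gal(N/L_{i,N}) \le G_N$. Every subgroup $U \le G_N$ is the stabilizer of a primitive element of its fixed field $N^U$, so the restricted hypothesis becomes: for every $U \le G_N$, some $G_N$-conjugate of $H_{1,N}$ is contained in $U$ iff the same holds for $H_{2,N}$. Applying this with $U = H_{1,N}$, the trivial inclusion $H_{1,N} \subseteq H_{1,N}$ yields $H_{2,N} \subseteq g H_{1,N} g^{-1}$ for some $g \in G_N$, so $|H_{2,N}| \le |H_{1,N}|$; by symmetry the orders coincide, and a containment of finite subgroups of equal order is an equality. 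Hence $H_{1,N}$ and $H_{2,N}$ are $G_N$-conjugate, so $L_{1,N}$ and $L_{2,N}$ are $G_N$-conjugate subfields of $N$.

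Finally, let $C_N := \{\pi \in G_N : \pi(L_{1,N}) = L_{2,N}\}$, a non-empty finite subset of $G_N$ by the previous step. A short verification shows that for $N \subseteq N'$ the restriction map $G_{N'} \twoheadrightarrow G_N$ sends $C_{N'}$ into $C_N$, so $\{C_N\}$ forms an inverse system of non-empty finite sets. Profinite compactness then produces a coherent element $\pi \in G = \varprojlim_N G_N$ with $\pi|_N \in C_N$ for every finite Galois $N$. Because every algebraic element of $L_i$ lies in some such $N$, $L_i = \bigcup_N L_{i,N}$, and thus $\pi(L_1) = L_2$, delivering the desired $K$-isomorphism. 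I expect the only delicate point to be the finite-level reformulation -- confirming that root-existence over $K$ really does translate into subconjugacy information for \emph{every} subgroup of $G_N$, which depends on realizing arbitrary subgroups as stabilizers; once that is in hand, the cardinality trick and the inverse-limit gluing are both routine.
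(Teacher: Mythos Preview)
Your proof is correct and takes a genuinely different route from the paper's. The paper argues model-theoretically: it shows that the quantifier-free type of an enumeration of $L_1$ is finitely satisfiable in $M_2$ (using the primitive element theorem at each finite stage), invokes compactness to realize it in an elementary extension, and thus obtains an embedding $L_1 \hookrightarrow L_2$ over $K$; by symmetry there is also $L_2 \hookrightarrow L_1$, and a pigeonhole argument on roots of minimal polynomials shows the composite self-embedding of $L_1$ is surjective. Your argument instead translates the hypothesis into Galois theory, proving at each finite Galois level $N/K$ that the subgroups $H_{1,N}$ and $H_{2,N}$ are conjugate via the cardinality trick, and then patches the conjugating elements by profinite compactness. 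The core finiteness idea is the same in both proofs (two-way containment plus finite cardinality forces equality), but yours phrases it group-theoretically and replaces the model-theoretic compactness step by the compactness of the profinite group $G$. Your approach has the advantage of producing the isomorphism as an explicit element of $\Gal(\Omega/K)$ and of avoiding any appeal to model-theoretic machinery; the paper's approach is shorter to write down and fits more naturally with the surrounding model-theoretic context.
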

This is presumably well-known, but we include a proof for lack of a reference.
\begin{proof}
  Let $L_i = K^\alg \cap M_i$, and let $\bar{a}$ be an enumeration of
  $L_1$.  We claim that $\qftp(\bar{a}/K)$ is finitely satisfiable in
  $M_2$.  Indeed, if $(b_1,\ldots,b_n)$ is a finite subtuple of
  $\bar{a}$, then $K(\bar{b}) = K(c)$ for some $c \in L_i$ by the
  primitive element theorem.  If $P(x)$ is the minimal polynomial of
  $c$ over $K$, then $P(x)$ has a root $c'$ in $M_2$ by assumption.
  Then there is an isomorphism $K(\bar{b}) = K(c) \cong K(c')$ sending
  $c$ to $c'$.  If $b_1',\ldots,b_n'$ are the images of
  $b_1,\ldots,b_n$ under this isomorphism, then $\qftp(\bar{b}'/K) =
  \qftp(\bar{b}/K)$, proving the claim.

  By compactness, there is an elementary extension $M_2' \succeq M_2$
  and an infinite tuple $\bar{a}'$ in $M_2'$ realizing $\qftp(\bar{a}/K)$.
  Then $\bar{a}'$ enumerates some algebraic extension $F/K$, so $F
  \subseteq K^{\alg} \cap M_2' = K^{\alg} \cap M_2 = L_2$.  The map
  $a_i \mapsto a'_i$ is an isomorphism $L_1 \cong F$ over $K$.  Thus
  we have an embedding $\iota : L_1 \to L_2$ over $K$.

  By symmetry, we also get an embedding $\iota' : L_2 \to L_1$ over
  $K$.  The composition $\iota' \circ \iota$ is an embedding of $L_1$
  into itself over $K$.  As $L_1/K$ is algebraic, this must be onto.
  (Otherwise, take $\beta \in L_1 \setminus \iota'(\iota(L_1))$ and
  let $P(x)$ be the minimal polynomial of $\beta$ over $K$.  If $S$ is
  the set of roots of $P$ in $L_1$, then the self-embedding $\iota'
  \circ \iota$ maps $S$ into $S \setminus \{\beta\}$, which
  contradicts the pigeonhole principle.)

  Thus $\iota' \circ \iota$ is onto, which implies that $\iota'$ is
  onto, and $\iota'$ is an isomorphism.
\end{proof}
\begin{theorem} \label{acl}
  Suppose $M \models C\XF$ and $K$ is a $K_0$-subfield of $M$.  Then
  $\acl_M(K)$ is the relative algebraic closure of $K$ in $M$.
\end{theorem}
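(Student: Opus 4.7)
The containment $K^{\alg} \cap M \subseteq \acl_M(K)$ is immediate, since any $\alpha \in K^{\alg} \cap M$ is a root of a polynomial in $K[X]$ with only finitely many roots. For the reverse inclusion, set $K' := K^{\alg} \cap M$ and fix $a \in M \setminus K'$, so that $a$ is transcendental over $K'$. Since $K \subseteq K'$ gives $\acl_M(K) \subseteq \acl_M(K')$, it is enough to show $a \notin \acl_M(K')$. By a standard compactness argument, this reduces to producing, for each $n$, an elementary extension of $M$ containing $n$ pairwise distinct realizations of $\tp(a/K')$.

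The approach is to iterate the amalgamation provided by Lemma~\ref{lem:regular-tensor}. I will inductively build an elementary chain $M = M_0 \prec M_1 \prec \cdots \prec M_n$ of models of $C\XF$ together with pairwise distinct realizations $a = b_0, b_1, \ldots, b_i \in M_i$ of $\tp(a/K')$. For the inductive step, take an isomorphic copy $\iota : M_i \to M_i'$ fixing $K'$ pointwise and set $b_{i+1} := \iota(b_0)$. Two observations make the step work. First, $K'$ is still algebraically closed in $M_i$: since $M \prec M_i$, no polynomial over $K$ acquires new roots in $M_i$, so $K^{\alg} \cap M_i = K^{\alg} \cap M = K'$, and the same holds for $M_i'$ by transport along $\iota$. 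Hence $M_i$ and $M_i'$ are both regular extensions of $K'$ satisfying $C\XF$. Second, Lemma~\ref{lem:regular-tensor} then gives $\Frac(M_i \otimes_{K'} M_i') \models T_C$, and we embed this field into some $M_{i+1} \models C\XF$. By model completeness of $C\XF$, $M_i \prec M_{i+1}$ and $M_i' \prec M_{i+1}$, so each of $b_0, \ldots, b_i, b_{i+1}$ realizes $\tp(a/K')$ in $M_{i+1}$ (for $b_{i+1}$ this is because $\iota$ fixes $K'$ and sends $b_0 = a$ to $b_{i+1}$, preserving types).

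The only subtle point is distinctness of the new realization from the old ones. In the tensor product $M_i \otimes_{K'} M_i'$, for any fixed $j \leq i$, choose $K'$-bases of $M_i$ and $M_i'$ containing $\{1, b_j\}$ and $\{1, b_{i+1}\}$ respectively; these exist because $b_j, b_{i+1}$ lie outside $K'$. The induced $K'$-basis of $M_i \otimes_{K'} M_i'$ then has $b_j \otimes 1$ and $1 \otimes b_{i+1}$ as two distinct basis elements, so $b_j \otimes 1 \neq 1 \otimes b_{i+1}$ in the tensor product, and hence in $M_{i+1}$. This yields $n$ pairwise distinct realizations of $\tp(a/K')$ in $M_n$, completing the proof.

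There is not really a hard obstacle; rather, the main conceptual point is recognizing that the amalgamation over $K'$ built into Lemma~\ref{lem:regular-tensor} is strong enough to multiply realizations of any transcendental type, and that the relative algebraic closure of $K$ is automatically preserved along the elementary chain because the number of roots of each polynomial is preserved.
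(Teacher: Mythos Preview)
Your proof is correct and rests on the same core idea as the paper's: amalgamate $M$ with an isomorphic copy over $K' = K^{\alg}\cap M$ via Lemma~\ref{lem:regular-tensor}, then use model completeness to make both copies elementary in the result. The difference is one of packaging. You iterate the amalgamation to manufacture $n$ distinct realizations of $\tp(a/K')$ for each $n$, checking distinctness by a basis argument in the tensor product. The paper instead performs a single amalgamation $M, M' \hookrightarrow N$ and argues directly that $\acl_M(K') = \acl_N(K') = \acl_{M'}(K') \subseteq M \cap M' = K'$, using that $\acl$ is preserved in elementary extensions and that $M, M'$ are linearly disjoint over $K'$ in $N$. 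This avoids the induction and the element-by-element analysis entirely. Your route is slightly more hands-on but equally valid; the paper's is shorter because it exploits the set-level identity $M \cap M' = K'$ rather than tracking individual realizations.
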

\begin{proof}
  Replacing $K$ with the relative algebraic closure, we may assume
  that $K$ is relatively algebraically closed in $M$.  It suffices to
  show that $\acl_M(K) \subseteq K$.  Let $M'$ be a copy of $M$ over
  $K$.  As in the proof of Lemma~\ref{lem:partial-elem}, build a
  diagram
  \begin{equation*}
    \xymatrix{ K \ar[r] \ar[d] & M \ar[ddr] \ar[d] & \\ M' \ar[drr] \ar[r] & \Frac(M \otimes_K M') \ar[dr] & \\ & & N}
  \end{equation*}
  where $N \models C\XF$.  By model completeness, $M \preceq N$ and
  $M' \preceq N$.  Then $M$ and $M'$ are $\ACF$-independent over $K$
  inside $N$, so $M \cap M' = K$ because $K$ is relatively
  algebraically closed in $M$ and $M'$.

  As algebraic closure is preserved in elementary extensions,
  \begin{equation*}
    \acl_M(K) = \acl_N(K) = \acl_{M'}(K) \subseteq M'
  \end{equation*}
  and so $\acl_M(K) \subseteq M \cap M' = K$.
\end{proof}

\begin{definition}[van den Dries~\cite{lou-dimension}] \label{ab}
		Let $(K,+,\cdot,\ldots)$ be an expansion of a field, and $F$ be a
		subfield.  Then $K$ is \emph{algebraically bounded over $F$} if for
		any formula $\varphi(\bar{x},y)$, there are finitely many polynomials
		$P_1,\ldots,P_m \in F[\bar{x},y]$ such that for any $\bar{a}$, \emph{if}
		$\varphi(\bar{a},K)$ is finite, then $\varphi(\bar{a},K)$ is contained in
		the zero set of $P_i(\bar{a},y)$ for some $i$ such that $P_i(\bar{a},y)$ does not vanish.  Following the convention in \cite{lou-dimension,JK-slim} we say that $K$
		is \emph{algebraically bounded} if it is algebraically bounded over
		$K$.\end{definition}
\begin{corollary} \label{alg-bound}
  Models of $C\XF$ are algebraically bounded.
\end{corollary}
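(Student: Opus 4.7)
The plan is to deduce Corollary~\ref{alg-bound} from Theorem~\ref{acl} via a standard compactness argument of van den Dries~\cite{lou-dimension}: because model-theoretic algebraic closure in $C\XF$ agrees with the field-theoretic algebraic closure over $K_0$, any element of a finite definable fiber is forced to satisfy a non-trivial polynomial over $K_0$ in the parameters; compactness then collapses these polynomials into a single finite list.

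First I would fix a formula $\varphi(\bar x, y)$. If $\varphi(\bar a, M)$ is finite, each $b \in \varphi(\bar a, M)$ lies in $\acl_M(\bar a)$, which by Theorem~\ref{acl} equals the relative algebraic closure of $K_0(\bar a)$ in $M$; thus $b$ is a root of some $P(\bar a, y)$ where $P \in K_0[\bar x, y]$ has $P(\bar a, \cdot) \not\equiv_y 0$ (i.e., at least one $y$-coefficient of $P$ does not vanish at $\bar a$). To extract a uniform finite list of such polynomials, I would form the partial type
\[
\Sigma(\bar x, y) := \{\varphi(\bar x, y),\, \chi(\bar x)\} \cup \bigl\{ P(\bar x, y) \ne 0 \;\vee\; P(\bar x, \cdot) \equiv_y 0 : P \in K_0[\bar x, y]\bigr\},
\]
where $\chi(\bar x)$ is a first-order formula expressing that $\varphi(\bar x, \cdot)$ is finite, and $P(\bar x, \cdot) \equiv_y 0$ abbreviates the finite conjunction saying all $y$-coefficients of $P$ vanish at $\bar x$. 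Any realization $(\bar a, b)$ of $\Sigma$ in some $M \models C\XF$ would place a $b$ transcendental over $K_0(\bar a)$ inside the finite fiber $\varphi(\bar a, M)$, forcing $b \in \acl_M(\bar a)$ and contradicting Theorem~\ref{acl}. So $\Sigma$ is inconsistent, and by compactness finitely many polynomials $P_1, \ldots, P_m \in K_0[\bar x, y]$ already witness the inconsistency; these $P_i$ are exactly the polynomials required by Definition~\ref{ab}.

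The main obstacle is producing the first-order formula $\chi(\bar x)$ expressing finiteness of the $\varphi$-fiber; this is precisely the uniform elimination of $\exists^\infty$ for $C\XF$ (Theorem~\ref{model-theoretic}(2c)). Granted this, one may take $\chi(\bar x) = \exists^{\le n_\varphi} z\, \varphi(\bar x, z)$ for the uniform bound $n_\varphi$. Establishing uniform $\exists^\infty$-elimination for $C\XF$ is a separate task, which can be carried out using the near-quantifier-elimination with $Sol_n$ predicates from Corollary~\ref{cor:near-qe} by controlling the finite fibers of atomic formulas in the extended language.
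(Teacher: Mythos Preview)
Your argument is essentially correct and is really just an unpacking of the lemma the paper cites as a black box: the paper simply invokes \cite[Lemma~2.11]{geom-field}, which states that algebraic boundedness is equivalent to model-theoretic and field-theoretic algebraic closure agreeing in elementary extensions, and then applies Theorem~\ref{acl}. Your compactness argument is (a version of) the proof of that cited lemma.

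The one point that deserves comment is your handling of $\exists^\infty$. You are right that the compactness step needs the finiteness of the fiber to be first-order, and you correctly flag this as the crux. However, your proposed route---extracting uniform $\exists^\infty$-elimination from quantifier elimination in $\mathcal{L}_{Sol}$ by ``controlling the finite fibers of atomic formulas''---is the hard way around and is not obviously workable: the $Sol_n$ predicates are existential, and analysing the fibers of Boolean combinations of $Sol_n$-instances directly is not straightforward. The clean route, and the one the paper actually takes (in Corollary~\ref{exists-infty}, immediately after the present corollary), is to observe that Theorem~\ref{acl} already gives exchange for $\acl$, and exchange alone yields uniform elimination of $\exists^\infty$ in any theory of fields by \cite[Theorem~2.5]{geom-field}. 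Since Corollary~\ref{exists-infty} is proved from Theorem~\ref{acl} directly and does not rely on the present corollary, there is no circularity in invoking it here; you should simply cite exchange rather than attempting a $Sol_n$-based argument.
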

\begin{proof}
  By~\cite[Lemma 2.11]{geom-field}, $K$ is algebraically
bounded if and only if field-theoretic and model-theoretic algebraic
closure \emph{over $K$} agree with each other in elementary extensions
of $K$.  This holds by Theorem~\ref{acl}.
\end{proof}
\begin{corollary} \label{exists-infty}
  $C\XF$ is a geometric theory:
  \begin{enumerate}
  \item $\acl(-)$ satisfies exchange.
  \item $\exists^\infty$ is uniformly eliminated.
  \end{enumerate}
\end{corollary}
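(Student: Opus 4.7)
The plan is to derive both clauses from the characterization of model-theoretic algebraic closure in Theorem~\ref{acl} together with the algebraic boundedness statement of Corollary~\ref{alg-bound}; no new model-theoretic input about $C\XF$ should be required.

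For clause (1), I would first reduce exchange to the classical Steinitz exchange property for field-theoretic algebraic closure. By Theorem~\ref{acl}, for any $M \models C\XF$ and any subset $A \subseteq M$, $\acl_M(A)$ equals the relative algebraic closure in $M$ of the subfield generated by $K_0 \cup A$. Thus if $a \in \acl_M(Ab) \setminus \acl_M(A)$, then $a$ is algebraic over the field $K_0(A,b)$ and transcendental over $K_0(A)$. A short transcendence degree computation shows $\trdg(K_0(A,a,b)/K_0(A,a)) = 0$, so $b$ lies in the relative algebraic closure of $K_0(A,a)$ in $M$, which by Theorem~\ref{acl} is $\acl_M(Aa)$. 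Hence exchange holds.

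For clause (2), I would deduce uniform elimination of $\exists^\infty$ directly from algebraic boundedness. Fix a formula $\varphi(\bar{x},y)$. By Corollary~\ref{alg-bound} applied in the form of Definition~\ref{ab}, there are finitely many polynomials $P_1,\ldots,P_m \in K_0[\bar{x},y]$ such that for every $\bar{a}$, if $\varphi(\bar{a},M)$ is finite then it is contained in the zero set of some $P_i(\bar{a},y)$ that does not vanish identically. Setting $N := \max_i \deg_y(P_i)$, any finite instance $\varphi(\bar{a},M)$ has size at most $N$. Consequently the equivalence
\begin{equation*}
\exists^\infty y \, \varphi(\bar{x},y) \;\longleftrightarrow\; \exists y_0,\ldots,y_N \Bigl( \bigwedge_{i \ne j} y_i \ne y_j \wedge \bigwedge_i \varphi(\bar{x},y_i) \Bigr)
\end{equation*}
holds in every model of $C\XF$, which is the required uniform elimination.

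The main thing to check is really the transcendence-degree step in clause (1) and the observation that the bound $N$ depends only on $\varphi$, not on $\bar{a}$ — both are routine once Theorem~\ref{acl} and Corollary~\ref{alg-bound} are in hand, so I expect no genuine obstacle. Alternatively, one may simply cite \cite[Lemma 2.11]{geom-field} (or the analogous statement in \cite{lou-dimension}), which packages the implication ``algebraically bounded $+$ $\acl$ equals relative algebraic closure $\Longrightarrow$ geometric'' in a single step, yielding both clauses at once.
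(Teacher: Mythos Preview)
Your treatment of (1) is the paper's argument spelled out: the paper simply observes that $\acl$ agrees with field-theoretic algebraic closure and hence satisfies exchange, leaving the Steinitz computation implicit. For (2) the paper takes a shorter route than you do: rather than going through algebraic boundedness, it cites \cite[Theorem~2.5]{geom-field}, which says that in any theory of fields where $\acl$ satisfies exchange, $\exists^\infty$ is uniformly eliminated. So (2) follows immediately from (1), and Corollary~\ref{alg-bound} is not invoked at all.

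Your argument via algebraic boundedness is essentially fine, but note two small points. First, Corollary~\ref{alg-bound} with the convention of Definition~\ref{ab} only gives the $P_i$ with coefficients in the model $K$, not in $K_0$; this does not affect the degree bound $N$, but it should be stated correctly. Second, since $C\XF$ is not complete, obtaining a \emph{single} $N$ valid across all completions needs one more sentence (a compactness argument, or an appeal to algebraic boundedness over the fixed base $K_0$). The paper's route via (1) sidesteps this entirely, which is its main advantage. Finally, the citation in your closing alternative is off: \cite[Lemma~2.11]{geom-field} is the characterization of algebraic boundedness already used to prove Corollary~\ref{alg-bound}; the result that packages ``exchange $\Rightarrow$ elimination of $\exists^\infty$'' for fields is \cite[Theorem~2.5]{geom-field}, which is precisely what the paper invokes.
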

\begin{proof}
  (1) holds because $\acl(-)$ agrees with field-theoretic algebraic
  closure, and (2) follows from (1) in any theory of fields
  by~\cite[Theorem 2.5]{geom-field}.
\end{proof}

Next we try to determine all the possible candidates that can serve as $\Abs(K)$ for $K\models C\XF$. We need the following lemma.
\begin{lemma}\label{lem:Weil-res}
Let $K$ be a model of $C\XF_\forall$ and $L$ be a proper finite extension of $K$, let $V$ be a geometrically integral affine variety over $L$, and let $W=\Res_{L/K}(V)$ be the Weil restriction of $V$ to $K$. Then there is no dominant rational map over $K$ from $W$ to $C$.
\end{lemma}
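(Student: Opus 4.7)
The plan is to derive a contradiction from the existence of a dominant rational map $f: W \dashrightarrow C$ over $K$, by base changing to $K^\alg$ to exploit the product decomposition of a Weil restriction and then combining Lemma~\ref{lem:factor} with Galois descent.

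\emph{Step 1 (Base change).} The Weil restriction satisfies the canonical isomorphism $W \times_K K^\alg \cong \prod_{\sigma} V^\sigma$, where $\sigma$ ranges over the $K$-embeddings $L \hookrightarrow K^\alg$ and $V^\sigma := V \otimes_{L, \sigma} K^\alg$. Since $V$ is geometrically integral over $L$, each $V^\sigma$ is integral over $K^\alg$, and a product of integral varieties over an algebraically closed field is again integral. The base change $f_{K^\alg}: \prod_\sigma V^\sigma \dashrightarrow C_{K^\alg}$ remains dominant.

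\emph{Step 2 (Factoring).} By iteratively applying Lemma~\ref{lem:factor}, splitting $\prod_\sigma V^\sigma$ as a product of two integral subproducts and recursing, we obtain $f_{K^\alg} = g \circ \pi_{\sigma_0}$ for some embedding $\sigma_0$ and some dominant rational map $g: V^{\sigma_0} \dashrightarrow C_{K^\alg}$, where $\pi_{\sigma_0}$ denotes the projection onto the $\sigma_0$-factor. The choice of $\sigma_0$ is unique: if $f_{K^\alg}$ also factored as $g' \circ \pi_{\sigma_1}$ for some $\sigma_1 \ne \sigma_0$, then fixing the $\sigma_1$-coordinate of a generic input while varying the $\sigma_0$-coordinate would force $g$ to be constant, contradicting its dominance onto the positive-dimensional curve $C$.

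\emph{Step 3 (Galois descent).} Under the decomposition $W \times_K K^\alg \cong \prod_\sigma V^\sigma$, the natural $\Gal(K^\alg/K)$-action on the left side permutes the factors on the right side: each $\tau \in \Gal(K^\alg/K)$ sends the $\sigma$-factor to the $(\tau\sigma)$-factor, and correspondingly sends $\pi_\sigma$ to $\pi_{\tau\sigma}$ (after identifying $V^\sigma$ with $V^{\tau\sigma}$ via $\tau$). Since $f$ is defined over $K$, $f_{K^\alg}$ is Galois-invariant, and conjugating $f_{K^\alg} = g \circ \pi_{\sigma_0}$ by any $\tau$ produces another factorization $f_{K^\alg} = g^\tau \circ \pi_{\tau\sigma_0}$. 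By the uniqueness in Step~2, $\tau\sigma_0 = \sigma_0$ for every $\tau \in \Gal(K^\alg/K)$, so $\sigma_0(L) \subseteq (K^\alg)^{\Gal(K^\alg/K)} = K$. Since $\sigma_0$ is injective with $\sigma_0|_K = \id_K$, this forces $[L:K] = 1$, contradicting the properness of $L/K$.

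The main technical obstacle is the Galois-theoretic bookkeeping in Step~3, namely verifying cleanly that the $\Gal(K^\alg/K)$-action on $W_{K^\alg}$ permutes the projections $\{\pi_\sigma\}$ in exactly the same way it permutes the embeddings $\{\sigma\}$. Steps~1 and~2 rely on standard properties of Weil restriction and on Lemma~\ref{lem:factor} (which itself draws on Grauert--Manin finiteness of dominant rational maps to $C$). I note that the hypothesis $K \models C\XF_\forall$ is not directly used: only the properness of $L/K$, geometric integrality of $V/L$, and genus $\ge 2$ of $C$ enter, so the lemma is really a purely geometric statement about Weil restrictions.
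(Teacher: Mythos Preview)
Your proof is correct and follows essentially the same strategy as the paper: base change to split the Weil restriction as a product of conjugate factors, apply Lemma~\ref{lem:factor} to factor the map through a single projection, and then use Galois invariance of $f$ to derive a contradiction. The paper's version is terser---it base-changes only to the splitting field of a primitive element and phrases the contradiction as ``obtaining a $K$-definable (in ACF) root of $P$''---but the argument is the same; your observation that the hypothesis $K \models C\XF_\forall$ is not actually used is correct and worth noting.
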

\begin{proof}
By the primitive element theorem, we may assume that $L=K[X]/P(X)$ for some irreducible polynomial $P(X)$ over $K$. For $F$ the splitting field of $P(X)$ over $K$, we have $W_F= \prod_{g\in G} V_F^g$, where $(V_F)^g$ is the conjugate of $(V_F)$ by $g\in G := \Gal(F/K)$. By Lemma~\ref{lem:factor}, we know that a dominant rational map over $K$ will factor through one of the components, hence obtaining a $K$-definable (in ACF) root of $P$, which is a contradiction.
\end{proof}
Let $C\XF'$ be the theory $C\XF$ without the axiom on finite
extensions (axiom (I$_C$)).  Fix some $K \models C\XF'$.  A priori this could be a
more general setting than $K \models C\XF$, but we will see shortly
that $C\XF' \vdash C\XF$.
\begin{theorem} \label{PAC}
  If $K \models C\XF'$ and
  $L/K$ is a proper algebraic extension, then $L$ is PAC.
\end{theorem}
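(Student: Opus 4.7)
The plan is to show that any geometrically integral variety $V$ over $L$ has an $L$-rational point. Replacing $V$ by a non-empty affine open subvariety, I may assume $V$ is affine. Since $V$ is of finite type and $L/K$ is algebraic, $V$ descends to some finite subextension of $L/K$; and because $L/K$ is proper, I may enlarge this subextension to obtain an intermediate field $L_1$ with $K \subsetneq L_1 \subseteq L$ over which $V$ is defined. Write $V'$ for $V$ viewed as a geometrically integral affine variety over $L_1$.

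The heart of the argument is to pass to the Weil restriction $W = \Res_{L_1/K}(V')$. This $W$ is an affine $K$-variety, and it is geometrically integral: after base change to $\bar{K}$ one has $W_{\bar{K}} \cong \prod_\sigma (V')^\sigma$, where $\sigma$ ranges over the $K$-embeddings $L_1 \hookrightarrow \bar{K}$; each conjugate $(V')^\sigma$ is integral because $V'$ is geometrically integral, and a finite product of integral varieties over an algebraically closed field is integral. Since $L_1/K$ is a proper finite extension, Lemma~\ref{lem:Weil-res} guarantees that there is no dominant rational map $W \dashrightarrow C$ over $K$. Applying axiom $(\mathrm{II}_C)$, which $K$ satisfies as a model of $C\XF'$, yields that $W(K)$ is Zariski dense in $W$, and in particular non-empty. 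The universal property of the Weil restriction then gives $W(K) = V'(L_1) = V(L_1) \subseteq V(L)$, so $V(L) \ne \varnothing$, as desired.

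The main obstacle is the case in which $V$ is already defined over $K$ and happens to carry a dominant rational map $V \dashrightarrow C$ over $K$. Then axiom $(\mathrm{II}_C)$ by itself would not force $V(K)$ to be non-empty, let alone $V(L)$. It is precisely here that properness of the extension $L/K$ is used: by inflating the base to a nontrivial finite subextension $L_1 \supsetneq K$ and then restricting scalars, any potentially problematic dominant map to $C$ is killed by Lemma~\ref{lem:Weil-res} (which in turn rests on Lemma~\ref{lem:factor} and the Grauert--Manin finiteness of dominant rational maps to a curve of genus at least $2$). This is the reason the Weil restriction detour, rather than a direct application of axiom $(\mathrm{II}_C)$ to $V$ itself, is the correct device.
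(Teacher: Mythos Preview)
Your proof is correct and follows essentially the same route as the paper: use Weil restriction from a proper finite subextension and invoke Lemma~\ref{lem:Weil-res} to kill any dominant rational map to $C$, then apply axiom $(\mathrm{II}_C)$. The one minor difference is in the reduction to a finite extension: the paper first reduces to the case where $L/K$ itself is finite by citing the fact that algebraic extensions of PAC fields are PAC \cite[Corollary~11.2.5]{field-arithmetic}, whereas you descend $V$ to a finite subextension $L_1$ and work there directly. Your version is slightly more self-contained, and you also make explicit the geometric integrality of $W$, which the paper leaves implicit.
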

\begin{proof}
  Algebraic extensions of PAC fields are PAC~\cite[Corollary 11.2.5]{field-arithmetic}, so we may assume $L$ is
  a proper finite extension.  Let $V$ be a geometrically integral
  affine variety over $L$.  By Lemma~\ref{lem:Weil-res}, $\Res_{L/K}
  V$ has no $K$-definable dominant rational maps to $C$.  By the
  axioms of $C\XF'$, there is a $K$-point on $\Res_{L/K} V$, or
  equivalently, an $L$-point on $V$.
\end{proof}
\begin{corollary} \label{drop-axiom}
  Suppose $K \models C\XF'$.
  \begin{enumerate}
  \item If $L/K$ is a proper elementary extension, then $C(L) \ne
    \varnothing$.
  \item $K \models C\XF$.
  \end{enumerate}
\end{corollary}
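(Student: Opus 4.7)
I read ``proper elementary extension'' in part (1) as a slip for ``proper algebraic extension''. If interpreted in the standard model-theoretic sense, the statement is contradictory: axiom $(\mathrm{III}_C)$ is the universal sentence $C(K) = \varnothing$, so any elementary extension $L \succeq K$ of $K \models C\XF'$ would itself satisfy $C(L) = \varnothing$, directly contradicting the stated conclusion $C(L) \ne \varnothing$. Moreover, part (2) is precisely the assertion that axiom $(\mathrm{I}_C)$ follows from $C\XF'$, and this is exactly the specialization of part (1) to finite (hence algebraic) extensions; so the algebraic reading is the only one consistent with the corollary's structure and with its label \emph{drop-axiom}.

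Under this reading, my plan for part (1) is a one-step application of Theorem~\ref{PAC}. Given $K \models C\XF'$ and a proper algebraic extension $L/K$, Theorem~\ref{PAC} shows that $L$ is a PAC field. The curve $C$ is smooth and geometrically integral over $K_0 \subseteq L$ by hypothesis, hence $C \times_{K_0} L$ remains geometrically integral as an $L$-variety. By the definition of a PAC field, every geometrically integral variety over $L$ has an $L$-rational point; in particular $C(L) \ne \varnothing$.

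For part (2), I need to verify axiom $(\mathrm{I}_C)$ for $K$: for every proper finite extension $L/K$, $C(L) \ne \varnothing$. Every finite extension is algebraic, so this is the special case of part (1) with $[L:K] < \infty$. The remaining axioms $(\mathrm{II}_C)$ and $(\mathrm{III}_C)$ hold because $K \models C\XF'$, and so $K \models C\XF$.

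The mathematical content is routine once Theorem~\ref{PAC} and the definition of PAC are in hand; the only real obstacle is interpretive, namely reconciling the literal wording of (1) with a substantive claim. My proposal resolves this by explicitly adopting the algebraic-extension reading, which is what the logical role of part (2) and the hypothesis of Theorem~\ref{PAC} force.
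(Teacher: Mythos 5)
Your proof is correct and is the intended argument: the paper states the corollary with no explicit proof, it being an immediate consequence of Theorem~\ref{PAC} together with the definition of a PAC field (apply it to the geometrically integral $L$-variety $C_L$). You are also right that ``proper elementary extension'' in (1) is a misprint for ``proper algebraic extension''; the literal reading contradicts axiom $(\mathrm{III}_C)$, which is a first-order sentence and hence preserved in any $L \succeq K$, and (2) requires precisely the finite-extension case of (1).
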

Now that we know $C\XF$ and $C\XF'$ are equivalent, we can use this to
build nice models of $C\XF$:
\begin{theorem} \label{extender}
  Suppose $F \models C\XF_\forall$.  Then there is a regular extension
  $K/F$ with $K \models C\XF$.
\end{theorem}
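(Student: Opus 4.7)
The plan is to construct $K$ as the directed union of a transfinite chain $(F_\alpha)_{\alpha < \kappa}$ of regular extensions of $F$ satisfying $T_C$, engineered so that $K$ satisfies axioms $(\mathrm{II}_C)$ and $(\mathrm{III}_C)$; Corollary~\ref{drop-axiom} then upgrades this to $K \models C\XF$.

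Fix $\kappa = (|F| + \aleph_0)^+$. Start with $F_0 = F$. At limit ordinals, take directed unions: regularity of $F_\lambda/F$ is preserved because ``algebraically closed in'' passes to directed unions (with separability automatic in characteristic $0$), and $T_C$ is preserved because it is inductive. At each successor stage $\alpha+1$, a bookkeeping function selects a pair $(V, W)$, where $V$ is a geometrically integral variety over $F_\alpha$ admitting no dominant rational $F_\alpha$-map to $C$, and $W \subsetneq V$ is a proper closed subvariety of $V$ over $F_\alpha$; then set $F_{\alpha+1} := F_\alpha(V)$, the function field of $V$.

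The essential verification is that $F_{\alpha+1} \models T_C$. By Fact~\ref{rational-ff}, the $C$-points of $F_\alpha(V)$ correspond to rational maps $V \dashrightarrow C$ over $F_\alpha$. Since $\dim C = 1$, every non-constant such map is dominant, contradicting the choice of $V$; constant maps would lie in $C(F_\alpha) = \varnothing$. Hence $C(F_{\alpha+1}) = \varnothing$. Geometric integrality of $V$ ensures $F_{\alpha+1}/F_\alpha$ is regular, and transitivity preserves regularity of $F_{\alpha+1}/F$. Moreover, the generic point of $V$ supplies an $F_{\alpha+1}$-rational point of $V$ lying outside $W$, since $W \subsetneq V$. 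Arrange the bookkeeping so that every pair arising at any stage $\beta < \kappa$ is offered for handling at cofinally many later stages; this is possible because the total number of such pairs is at most $\kappa$. Let $K = \bigcup_{\alpha < \kappa} F_\alpha$; then $K/F$ is regular and $K \models T_C$. For $(\mathrm{II}_C)$, take a geometrically integral $V/K$ without dominant rational map to $C$ over $K$, and any proper closed $W \subsetneq V$ over $K$. Both are defined over some $F_\beta$; crucially, $V_{F_\gamma}$ has no dominant $F_\gamma$-map to $C$ for \emph{every} $\gamma \ge \beta$, since base-changing such a map to $K$ would contradict the hypothesis on $V$. Thus $(V_{F_\beta}, W)$ remains eligible at every later stage, so the bookkeeping handles it, yielding a point in $V(K) \setminus W(K)$. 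Hence $V(K)$ is Zariski dense in $V$, confirming $(\mathrm{II}_C)$, and Corollary~\ref{drop-axiom} concludes $K \models C\XF$.

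The main obstacle is orchestrating the bookkeeping: a pair eligible over $F_\beta$ may in principle lose eligibility over a larger $F_\gamma$ (if $V_{F_\gamma}$ acquires a new dominant map to $C$ using coefficients in $F_\gamma \setminus F_\beta$). The saving point is that the pairs \emph{relevant} to axiom $(\mathrm{II}_C)$---those attached to some $V/K$ with no dominant map to $C$ over $K$---remain persistently eligible by base change, so the bookkeeping catches them despite this instability.
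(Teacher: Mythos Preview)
Your proposal is correct and follows essentially the same transfinite-chain construction as the paper. The paper is deliberately terse---it says only ``by choosing $\kappa$ and the $V_\alpha$ correctly, we can easily ensure that the limit \ldots\ is a model of $C\XF'$''---and you have simply spelled out the bookkeeping that the paper leaves implicit, including the key persistence observation that a $V$ with no dominant $K$-map to $C$ also has no dominant $F_\gamma$-map to $C$ at any intermediate stage.
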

\begin{proof}
It suffices to find a regular extension $K/F$ with $K \models C\XF'$.
The proof is nearly identical to the well-known proof of Fact~\ref{mc-fact}(1), but we include the proof for completeness.
\begin{claim}
    If $F \models C\XF_\forall$, then there is a regular extension $F'/F$ such that $F' \models C\XF_\forall$, and for any geometrically integral variety $V/F$, either there is a dominant rational map $f : V \dashrightarrow C$ over $F'$ or $V(F')$ is non-empty.
\end{claim}
\begin{claimproof}
    Let $\{V_\alpha\}_{\alpha < \kappa}$ enumerate the geometrically integral varieties $V$ over $F$.  Build an increasing chain of regular extensions $F_\alpha$ as follows:
    \begin{itemize}
        \item If $\alpha = 0$, set $F_\alpha = F$.
        \item If $\alpha$ is a limit ordinal, set $F_\alpha = \bigcup_{\beta < \alpha} F_\beta$.
        \item If $\alpha = \beta+1$ and there is a dominant rational map $V_\beta \dashrightarrow C$ over $F_\beta$, set $F_{\beta+1} = F_\beta$.
        \item If $\alpha = \beta+1$ and there is no dominant rational map $V_\beta \dashrightarrow C$ over $F_\beta$, set $F_{\beta+1} = F_\beta(V_\beta)$.  This will satisfy $C\XF_\forall$ by Fact~\ref{rational-ff} and induction.
    \end{itemize}
    Then take $F' = \bigcup_{\alpha < \kappa} F_\alpha$.
\end{claimproof}
Apply the claim $\omega$ times to build an increasing chain of regular extensions
\begin{equation*}
    F = F_0 \subseteq F_1 \subseteq \cdots
\end{equation*}
such that $F_i \models C\XF_\forall$, and if $V$ is a geometrically integral variety over $F_i$ then either there is a dominant rational map $V \dashrightarrow C$ over $F_{i+1}$, or $V(F_{i+1}) \ne \varnothing$.  Take $K = \bigcup_{i < \omega} F_i$.  Then satisfies axiom $(\mathrm{III}_C)$ ($K \models C\XF_\forall$), and $K$ satisfies the following weaker version of axiom $(\mathrm{II}_C)$:
\begin{quote}
  For any geometrically integral variety $V/K$, either there is a dominant rational map $f : V \dashrightarrow C$ over $K$ or $V(K)$ is non-empty.
\end{quote}
Replacing $V$ with Zariski dense open subvarieties, this implies the full axiom $(\mathrm{II}_C)$.  Thus $K \models C\XF'$
\end{proof}
\begin{theorem} \label{completions-2}
  If $L/K_0$ is an algebraic extension avoiding $C$, then there is $M
  \models C\XF$ with $\Abs(M) = L$.
\end{theorem}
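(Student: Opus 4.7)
The plan is to bootstrap directly from Theorem~\ref{extender}. The hypothesis says $C(L)=\varnothing$, which is exactly the statement $L\models C\XF_\forall$. So I can feed $L$ into Theorem~\ref{extender} to obtain a regular extension $M/L$ with $M\models C\XF$. The only thing left to check is that the relative algebraic closure of $K_0$ inside this $M$ is precisely $L$, rather than something bigger.

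For the computation of $\Abs(M)$, I would argue as follows. Since $L/K_0$ is algebraic, $L^{\alg}=K_0^{\alg}$, so
\[
\Abs(M) \;=\; K_0^{\alg}\cap M \;=\; L^{\alg}\cap M.
\]
Now regularity of $M/L$ (in characteristic $0$) is equivalent to $L$ being relatively algebraically closed in $M$, i.e. $L^{\alg}\cap M = L$. Combining these two equalities gives $\Abs(M)=L$, as required.

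There is essentially no obstacle here: the real work was already carried out in Theorem~\ref{extender}, whose construction hands us a regular extension rather than merely an arbitrary extension, and in the equivalence $C\XF'\equiv C\XF$ (Corollary~\ref{drop-axiom}) that justifies the conclusion of that theorem. The only thing to be mildly careful about is noting that the starting field $L$ does satisfy $C\XF_\forall$ (this is immediate from the assumption that $L$ avoids $C$) and that ``regular'' in characteristic $0$ coincides with ``relatively algebraically closed,'' so no stray algebraic elements can sneak into $M$ beyond those already in $L$.
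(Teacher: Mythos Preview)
Your proof is correct and follows exactly the paper's approach: the paper's proof is the single line ``Take $M/L$ a regular extension with $M \models C\XF$,'' invoking Theorem~\ref{extender}, and you have simply spelled out the (straightforward) verification that $\Abs(M)=L$ which the paper leaves implicit.
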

\begin{proof}
  Take $M/L$ a regular extension with $M \models C\XF$.
\end{proof}
With Theorem~\ref{completions-1}, this characterizes the completions
of $C\XF$:
\begin{itemize}
\item If $K \models C\XF$, then $\Th(K)$ is determined by the
  isomorphism class of $\Abs(K)$ as a $K_0$-field.
\item The possibilities for $\Abs(K)$ are the algebraic extensions of
  $K_0$ avoiding $C$.
\end{itemize}
Next we consider the example from the introduction, and obtain a model
with a decidable theory:
\begin{theorem} \label{thm:fermat-example}
  Suppose $K_0 = \Qq$ and $C$ is the punctured Fermat curve $\{(x,y)
  : x^4 + y^4 = 1, x \ne \pm 1, y \ne \pm 1\}$.
  \begin{enumerate}
  \item There is a model $M \models C\XF$ with $\Abs(M) = \Qq$.
  \item There is a decidable completion of $C\XF$.
  \end{enumerate}
\end{theorem}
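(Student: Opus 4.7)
The plan is to obtain both parts as direct consequences of the completion-classification machinery established in Theorems~\ref{completions-1}, \ref{extender}, and \ref{completions-2}, together with the computable axiomatizability of $C\XF$ already noted in the preceding Remark.

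For part (1), I would first observe that $C(\Qq)=\varnothing$: Fermat's theorem (the $n=4$ case, due to Fermat himself by infinite descent) implies that the only rational points on the affine Fermat quartic $x^4+y^4=1$ are $(\pm 1, 0)$ and $(0,\pm 1)$, and these four points are exactly the points removed in passing to $C$. Hence $\Qq$ is itself an algebraic extension of $K_0=\Qq$ that avoids $C$, so I can apply Theorem~\ref{completions-2} with $L=\Qq$ to produce $M\models C\XF$ with $\Abs(M)=\Qq$. (Equivalently, one could apply Theorem~\ref{extender} to $F=\Qq$ directly: the resulting regular extension $M/\Qq$ automatically satisfies $\Abs(M)=\Qq$ since regularity means $\Qq$ is relatively algebraically closed in $M$.)

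For part (2), I take the completion $T^\ast=\Th(M)$ where $M$ is the model produced in (1). To see that $T^\ast$ is decidable, it is enough to exhibit a computable axiomatization, since any computably axiomatized complete theory has a decidable theory (run the deduction enumerator until a proof of $\varphi$ or $\neg\varphi$ appears). I would axiomatize $T^\ast$ as $C\XF$ together with the schema
\begin{equation*}
  \Sigma = \{\,\forall x\,\neg\bigl(P(x)=0\bigr) : P(x)\in\Qq[x]\ \text{monic, irreducible, of degree}\ \ge 2\,\}.
\end{equation*}
The set $\Sigma$ is computable because irreducibility of a polynomial over $\Qq$ is decidable (e.g.\ via Kronecker's algorithm, or any modern factorization algorithm). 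The axioms of $C\XF$ are computable by the Remark at the end of Section~\ref{sec:axiom}. Hence $C\XF\cup\Sigma$ is a computable axiomatization.

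It remains to check that $C\XF\cup\Sigma$ is consistent and complete. Consistency is witnessed by the model $M$ of part (1): $\Abs(M)=\Qq$ precisely means that no monic irreducible polynomial in $\Qq[x]$ of degree $\ge 2$ has a root in $M$, so $M\models\Sigma$. For completeness, suppose $M_1,M_2\models C\XF\cup\Sigma$. Then $\Abs(M_i)$ is an algebraic extension of $\Qq$ containing no root of any monic irreducible polynomial in $\Qq[x]$ of degree $\ge 2$, forcing $\Abs(M_i)=\Qq$. By Theorem~\ref{completions-1}, $M_1\equiv M_2$. So $C\XF\cup\Sigma$ is complete, hence decidable, which is exactly the claim of (2). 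There is no real obstacle here; the only nontrivial input is Fermat's theorem on $x^4+y^4=1$, and every other step is an immediate application of results already proved.
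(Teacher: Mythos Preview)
Your proposal is correct and follows essentially the same approach as the paper: part (1) via Theorem~\ref{completions-2} with $L=\Qq$, and part (2) by observing that $C\XF$ together with a computable schema expressing $\Abs(M)=\Qq$ gives a computably axiomatized complete theory (completeness via Theorem~\ref{completions-1}, consistency via part (1)). You supply more detail than the paper---the explicit appeal to Fermat's descent for $C(\Qq)=\varnothing$ and the explicit schema $\Sigma$---but the structure of the argument is the same.
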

\begin{proof}
  Part (1) follows by taking $L = K_0 = \Qq$ in
  Theorem~\ref{completions-2}.  Part (2) follows because $C\XF$ is
  computably axiomatized, and the condition ``$\Abs(M) = \Qq$'' is
  also computably axiomatized. Together, these generate a consistent
  theory by part (1) and a complete theory by
  Theorem~\ref{completions-1}.
\end{proof}
\begin{corollary} \label{cor:decidable-non-large}
  There is an infinite decidable field that is not large.
\end{corollary}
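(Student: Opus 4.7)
The plan is to simply combine the preceding theorem with Theorem~\ref{large-characterize}. Take $K_0 = \Qq$ and let $C$ be the punctured Fermat curve $\{(x,y) : x^4 + y^4 = 1,\ x \ne \pm 1,\ y \ne \pm 1\}$. The preceding theorem produces a decidable completion of $C\XF$; I pick any model $M$ of this completion. Such an $M$ is a field extending $\Qq$, hence infinite, and decidable by construction, so the only remaining task is to verify non-largeness.

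For non-largeness, I would apply Theorem~\ref{large-characterize}, which says that a model of $C\XF$ fails to be large precisely when $\tilde{C}(M) \ne \varnothing$. The projective completion $\tilde{C}$ of the punctured affine Fermat curve is birational to the full affine Fermat curve $\{x^4 + y^4 = 1\}$, and in particular the four $\Qq$-rational points $(\pm 1, 0)$, $(0, \pm 1)$---which were removed from $C$ but not from its smooth projective model---lie in $\tilde{C}(\Qq) \subseteq \tilde{C}(M)$. Therefore $\tilde{C}(M) \ne \varnothing$, and Theorem~\ref{large-characterize} gives that $M$ is not large.

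Since every step is already established earlier in the excerpt, there is no real obstacle; the only thing to double-check is that the four removed affine points genuinely give smooth points of $\tilde{C}$ over $\Qq$, which is immediate because the projective Fermat quartic is smooth and these points lie on it. Putting these observations together yields the desired infinite, decidable, non-large field.
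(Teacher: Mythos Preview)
Your proposal is correct and follows exactly the approach implicit in the paper: the corollary is stated without proof there, being immediate from the preceding theorem together with Theorem~\ref{large-characterize} (the paper already spelled out the non-largeness argument earlier, in the proof that there is a non-large model complete field). The only cosmetic point is that $\tilde{C}$ literally \emph{contains} the four affine points rather than merely being birational to a curve containing them, but your conclusion is unaffected.
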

There was no previous known example of this kind, to the best of our knowledge.  This also contradicts the intuition that any ``logically tame'' field should be large.

On the topic of decidability, one can also ask whether the incomplete theory $C\XF$ is decidable---is there an algorithm which takes a sentence $\phi$ and determines whether $C\XF \vdash \phi$?  This turns out to be closely related to the effective Mordell problem in number theory:
\begin{theorem}\label{thm:mordell}
  Let $K_0$ be a number field and $C$ be a curve of genus $\ge 2$ over $K_0$ with no $K_0$-rational points.  Then the following two computational problems are Turing equivalent:
  \begin{enumerate}
      \item Given a sentence $\phi$ in the language of $K_0$-algebras, determine whether $C\XF \vdash \phi$.
      \item Given a finite extension $K/K_0$, determine whether $C(K) = \varnothing$.
  \end{enumerate}
\end{theorem}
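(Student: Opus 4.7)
The plan is to establish two Turing reductions, using the classification of completions of $C\XF$ (Theorems~\ref{completions-1} and~\ref{completions-2}) together with quantifier elimination in the expanded language (Corollary~\ref{cor:near-qe}).

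For the reduction from (2) to (1), given a finite extension $K/K_0$, pick a primitive element $\alpha \in K$ with minimal polynomial $p(x) \in K_0[x]$ and form the $\cL(K_0)$-sentence $\phi_K := \exists y\,(p(y) = 0)$. The plan is to show that $C(K) = \varnothing \iff C\XF \not\vdash \neg\phi_K$, which reduces (2) to (1). The forward direction uses Theorem~\ref{completions-2} to produce $M \models C\XF$ with $\Abs(M) \cong K$; such an $M$ contains a root of $p$ and witnesses $\phi_K$. For the backward direction, any root of $p$ in $M \models C\XF$ generates a $K_0$-subfield isomorphic to $K$, forcing $C(M) \supseteq C(K) \ne \varnothing$ and contradicting $M \models T_C$.

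For the reduction from (1) to (2), note first that $\{\phi : C\XF \vdash \phi\}$ is already r.e.\ since $C\XF$ is computably axiomatized (see the remark at the end of Section~\ref{sec:axiom}). It suffices to show that its complement is r.e.\ relative to oracle (2). For each finite $C$-avoiding extension $K/K_0$, let $T_K := C\XF \cup \{\text{``}q \text{ has exactly } n_q \text{ roots in } M\text{''} : q \in K_0[x]\}$, where $n_q$ is the number of roots of $q$ in $K$; this is r.e.\ axiomatized uniformly in $K$, and complete by Theorem~\ref{completions-1}, hence decidable. I plan to enumerate finite $K/K_0$, filter by $C(K) = \varnothing$ using the oracle, and search for proofs of $T_K \vdash \neg\phi$. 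Correctness of this enumeration reduces to the key claim:
\begin{equation*}
(*)\qquad C\XF \not\vdash \phi \iff T_K \vdash \neg\phi \text{ for some finite } C\text{-avoiding } K/K_0.
\end{equation*}

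The main obstacle is $(*)$. The backward direction is immediate since $T_K$ extends $C\XF$ consistently. For the forward direction, I would apply Corollary~\ref{cor:near-qe} to obtain a quantifier-free sentence $\phi'$ in $\cL(K_0) \cup \{\mathrm{Sol}_n\}_{n \ge 1}$ equivalent to $\phi$ modulo $C\XF$. As a closed quantifier-free formula, $\phi'$ is a Boolean combination of absolute equations among $K_0$-constants and finitely many atoms $\mathrm{Sol}_n(c_0, \ldots, c_n)$; each such atom asserts the existence in $M$ of a root of a specific polynomial $q \in K_0[y]$, and any such root lies in $\Abs(M)$. Let $E$ be the compositum over $K_0$ of the splitting fields of the polynomials appearing in these atoms; then the truth value of $\phi'$ in any $M \models C\XF$ depends only on $\Abs(M) \cap E$ (chosen in a fixed embedding into $K_0^{\alg}$). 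If $C\XF \not\vdash \phi$, some completion $T_L$ with $L/K_0$ algebraic and $C$-avoiding satisfies $T_L \vdash \neg\phi$; then setting $K := L \cap E$ yields a finite $C$-avoiding subfield realizing the same root-pattern on $E$, so $T_K \vdash \neg\phi'$ and hence $T_K \vdash \neg\phi$, establishing $(*)$.
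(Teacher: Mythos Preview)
Your proposal is correct and follows essentially the same approach as the paper: both directions rely on the characterization of completions (Theorems~\ref{completions-1}--\ref{completions-2}) together with quantifier elimination in $\mathcal{L}_{Sol}$ (Corollary~\ref{cor:near-qe}) to reduce satisfiability of a sentence to a question about finite $C$-avoiding extensions of $K_0$. The only cosmetic difference is that the paper applies QE to $\phi$ up front and then directly tests $K \models \neg\phi'$ in each finite extension, whereas you defer QE to the correctness argument for $(*)$ and instead invoke decidability of the complete theory $T_K$; both routes yield the same parallel-search algorithm.
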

\begin{proof}
  Let $\mathcal{L}_{Sol}$ be the language of $K_0$-algebras expanded
  with $Sol_n$ predicates as in Corollary~\ref{cor:near-qe}.  Note the
  following:
  \begin{claim}
    There is a recursive function which assigns to each sentence
    $\phi$ a quantifier-free $\mathcal{L}_{Sol}$-sentence $\phi'$ such
    that $C\XF \vdash \phi \leftrightarrow \phi'$.
  \end{claim}
  Indeed, such a $\phi'$ exists by Corollary~\ref{cor:near-qe}, and we can
  find $\phi'$ recursively by searching logical consequences of $C\XF$
  until we find one of the form $\phi \leftrightarrow \phi'$.
  \begin{claim}
    If $\phi$ is a quantifier-free $\mathcal{L}_{Sol}$-sentence then
    the following are equivalent:
    \begin{itemize}
    \item Some model of $C\XF$ satisfies $\phi$.
    \item There is an algebraic extension $K/K_0$ such that $C(K) =
      \varnothing$ and $K \models \phi$.
    \item There is a finite extension $K/K_0$ such that $C(K) =
      \varnothing$ and $K \models \phi$.
    \end{itemize}
  \end{claim}
  To see this, note that $M \models \phi \iff \Abs(M) \models \phi$,
  and the possibilities for $\Abs(M)$ are characterized by
  Theorem~\ref{completions-2}.  This shows that the first two bullet
  points are equivalent.  The second bullet point implies the third
  bullet point because if we write $K$ as a union of an increasing
  chain of finite extensions $K_0 \subseteq K_1 \subseteq K_2
  \subseteq \cdots$, then $K \models \phi$ implies that $K_i \models
  \phi$ for sufficiently large $i$.  The converse is trivial.

  We now show that problems (1) and (2) are Turing equivalent.  First
  suppose we have an oracle for problem (1).  Let $K$ be a given
  finite extension of $K_0$.  Write $K$ as $K_0(a)$ where $a$ has
  irreducible polynomial $P(x)$.  Then the following are equivalent:
  \begin{itemize}
  \item $C(K) = \varnothing$.
  \item There is a finite extension $L/K$ such that $C(L) = \varnothing$.
  \item There is some finite extension $L/K_0$ such that $C(L) =
    \varnothing$ and $L \models \exists x : P(x) = 0$.
  \end{itemize}
  By the second claim, these are in turn equivalent to the following:
  \begin{itemize}
  \item There is a model $M \models C\XF$ such that $M \models \exists
    x : P(x) = 0$.
  \item $C\XF \not \vdash \neg \exists x : P(x) = 0$.
  \end{itemize}
  We can determine whether the final point holds using the oracle for
  problem (1).

  Conversely, suppose an oracle for problem (2) is given.  Given a
  sentence $\phi$, we must determine whether $C\XF \vdash \phi$.  By the
  first claim, we may assume $\phi$ is a quantifier-free
  $\mathcal{L}_{Sol}$-sentence.  By the second claim, exactly one of
  the following holds:
  \begin{itemize}
  \item $C\XF \vdash \phi$.
  \item There is a finite extension $K/K_0$ such that $C(K) =
    \varnothing$ and $K \models \neg \phi$.
  \end{itemize}
  Run two processes in parallel.  One process searches all logical
  consequences of $C\XF$, looking for a proof of $\phi$.  The other process
  searches all finite extensions $K$ of $K_0$, looking for one such
  that $C(K) = \varnothing$ and $K \models \neg \phi$.  We can test
  whether $C(K) = \varnothing$ using the oracle, and we can test
  whether $K \models \phi$ using the rational roots test.  Eventually
  one process terminates.
\end{proof}
In particular, $C\XF$ would be decidable if there was an effective proof of the Mordell conjecture---an algorithm to find all the $K$-rational points on a curve of genus $\ge 2$ over $K$.
    
Next we aim to show that models of $C\XF$ are Hilbertian, hence unbounded. Recall that a field $K$ is \emph{Hilbertian} if there is $K'$ an elementary extension of $K$ and $t\in K'\setminus K$ such that $K(t)$ is relatively algebraically closed in $K'$. This is equivalent to the usual definition in terms Hilbert's irreduciblity theorem~\cite[Chapter 12.1] {field-arithmetic} by \cite[Proposition 15.1.1]{field-arithmetic}.
\begin{theorem}
  \label{Hilb}
  If $K \models C\XF$, then $K$ is Hilbertian.
\end{theorem}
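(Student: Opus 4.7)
The plan is to satisfy the definition of Hilbertianity recalled just before the theorem: produce an elementary extension $K' \succeq K$ and a transcendental element $t \in K' \setminus K$ such that $K(t)$ is relatively algebraically closed in $K'$.

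First I would set $F = K(t)$ for $t$ a fresh transcendental and verify $F \models C\XF_\forall$, i.e.\ $C(F) = \varnothing$. A $K(t)$-point of $C$ amounts to a morphism $\Spec K(t) \to C$, which extends to a morphism $\Pp^1_K \to \tilde{C}$ by the valuative criterion of properness applied to the smooth projective curve $\tilde{C}$. Since $g(\tilde{C}) \geq 2$, the Riemann--Hurwitz formula rules out any non-constant morphism $\Pp^1_K \to \tilde{C}$, so the extended map must be constant. Its image is forced to lie in $C \subseteq \tilde{C}$ because the generic point of $\Pp^1_K$ maps into $C$, and this yields a $K$-rational point of $C$, contradicting $C(K) = \varnothing$.

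Having established $F \models C\XF_\forall$, I would invoke Theorem~\ref{extender} to produce a regular extension $K'/F$ with $K' \models C\XF$. Model completeness of $C\XF$ then gives $K \preceq K'$. Regularity of $K'/F$ means precisely that $F = K(t)$ is relatively algebraically closed in $K'$, and clearly $t \in K' \setminus K$, so all three clauses of the definition of Hilbertianity are satisfied.

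The only genuinely curve-specific input is the Riemann--Hurwitz step showing $C(K(t)) = \varnothing$; this is where the genus $\geq 2$ hypothesis is essential, and it is the one point I would anticipate having to be careful about. The remainder of the argument is formal, flowing from Theorem~\ref{extender} together with model completeness of $C\XF$, so no further real obstacle is expected.
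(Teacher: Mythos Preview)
Your proposal is correct and follows essentially the same route as the paper: show $K(t)\models C\XF_\forall$, apply Theorem~\ref{extender} to get a regular extension $K'\models C\XF$, and conclude $K\preceq K'$ by model completeness. The only difference is cosmetic---the paper phrases the first step as ``there are no $K$-definable dominant rational maps $\Aa^1\dashrightarrow C$'' (implicitly via Fact~\ref{rational-ff}), whereas you spell out the Riemann--Hurwitz argument after extending to $\Pp^1\to\tilde{C}$.
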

\begin{proof}
  Build the rational function field $K(t)$.  Note
  that $K(t) \models C\XF$, because there are no $K$-definable
  dominant rational maps $\Aa^1 \dashrightarrow C$.  By
  Theorem~\ref{extender}, there is a regular extension $K'$ of $K(t)$
  such that $K' \models C\XF$.  Then $K' \succeq K$ by model
  completeness.
\end{proof}

\begin{theorem} \label{thm:unbdd}
  If $K \models C\XF$, then $\Gal(K)$ is unbounded.
\end{theorem}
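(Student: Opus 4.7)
The plan is to derive this as a quick consequence of Hilbertianity, already established in Theorem~\ref{Hilb}.  Recall that $\Gal(K)$ is bounded exactly when $K$ has only finitely many separable extensions of each finite degree, so it suffices to produce infinitely many quadratic extensions of $K$.  Since we are in characteristic $0$, this amounts to showing that $K^\times/(K^\times)^2$ is infinite.

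To see this, I would use the standard ``thin set'' characterization of Hilbertianity.  Recall that a subset of $K$ is \emph{thin} if it is contained in a finite union of images $\pi(V(K))$, where $V$ is an irreducible $K$-variety and $\pi:V\to \Aa^1$ is a dominant $K$-morphism of degree $\ge 2$; Hilbertianity is equivalent to the statement that $K$ is not a union of finitely many thin sets (see~\cite[Chapter~12]{field-arithmetic}).  The set of squares in $K$ is thin, being the image of the degree $2$ map $\Aa^1\to\Aa^1$, $b\mapsto b^2$.  Suppose for contradiction that $K^\times/(K^\times)^2$ is finite, with representatives $a_1,\ldots,a_m$.  Then
\[
K \;=\; \{0\}\cup\bigcup_{i=1}^{m}\bigl\{a_i b^2 : b\in K\bigr\},
\]
exhibiting $K$ as a finite union of thin sets, contradicting Theorem~\ref{Hilb}.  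Hence $K^\times/(K^\times)^2$ is infinite, so $K$ has infinitely many quadratic extensions, and $\Gal(K)$ is unbounded.

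There is no real obstacle here: everything is classical once Hilbertianity is in hand.  Alternatively, one can simply cite that Hilbertian fields have unbounded absolute Galois group, for example via~\cite[Chapter~16]{field-arithmetic}, which in fact gives the stronger statement (promised in the introduction) that $\Gal(K)$ is $\omega$-free; that stronger statement would presumably be proved separately, using additionally the PAC property of finite extensions from Theorem~\ref{PAC} together with the characterization of $\omega$-free PAC extensions of Hilbertian fields.
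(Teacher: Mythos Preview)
Your argument is correct. The paper's proof begins with exactly your observation---``This follows from Hilbertianity''---but then, instead of invoking the thin-set characterization, gives a self-contained direct argument that reuses the construction from Theorem~\ref{Hilb}: it passes to a regular extension $K^* \succeq K$ containing independent transcendentals $t_1,\ldots,t_n$ (so $t_i/t_j$ is a non-square in $K^*$), and concludes that $K^\times/(K^\times)^2$ is infinite. Both routes arrive at infinitely many quadratic extensions; yours is the more standard deduction from Hilbertianity, while the paper's avoids importing the thin-set formalism and keeps everything internal to the $C\XF$ machinery already developed.

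One caveat on your closing remark: the stronger statement that $\Gal(K)$ is $\omega$-free is indeed proved separately in the paper (Theorem~\ref{omega-free}), but not via the route you sketch. Since $K$ itself is not PAC (it avoids $C$), one cannot directly invoke the ``Hilbertian PAC $\Rightarrow$ $\omega$-free'' theorem for $K$; that argument only applies to the proper finite extensions of $K$. The paper instead proves $\omega$-freeness of $\Gal(K)$ by a direct embedding-problem argument, building an auxiliary extension $F/K$ that solves the given embedding problem and still avoids $C$, then using the universality of the monster model to realize $F$ inside an elementary extension.
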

\begin{proof}
  This follows from Hilbertianity, but here is a direct proof using
  the strategy of Theorem~\ref{Hilb}.
  \begin{claim}
    For any $n$, there are $a_1,\ldots,a_n \in K^\times$ such that
    $\sqrt{a_i/a_j} \notin K$ for $i \ne j$.
  \end{claim}
  \begin{claimproof}
    It suffices to find such $a_i$ in an elementary extension of $K$.
    Build the rational function field $K(t_1,\ldots,t_n)$.  As in the
    proof of Theorem~\ref{Hilb}, there is a regular extension $K^*$ of
    $K(t_1,\ldots,t_n)$ such that $K^* \succeq K$.  Then $t_i/t_j$ is
    not a square in $K^*$ for $i \ne j$, because it is not a square in
    $K(t_1,\ldots,t_n)$.
  \end{claimproof}
  Then there are infinitely many Galois extensions of degree 2.
\end{proof}
\begin{remark} \label{no-imaginaries}
  It follows that $C\XF$ does not eliminate imaginaries.  Indeed, any
  geometric theory of fields which eliminates imaginaries must have
  bounded Galois group.  See~\cite[Corollary 4.2]{geom-field}.
\end{remark}
\begin{theorem} \label{proper-finite-ext}
  If $L/K$ is a proper finite extension of $K$, then $L$ is an
  $\omega$-free PAC field.
\end{theorem}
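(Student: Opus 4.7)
The plan is to assemble three ingredients: the PAC property from Theorem~\ref{PAC}, the Hilbertianity from Theorem~\ref{Hilb}, and the classical fact that Hilbertian PAC fields have $\omega$-free absolute Galois group. No new ideas seem necessary beyond what has already been established.

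First, since $L/K$ is a proper algebraic extension and $K \models C\XF \vdash C\XF'$, Theorem~\ref{PAC} directly gives that $L$ is PAC. Second, I would argue that $L$ is Hilbertian. By Theorem~\ref{Hilb}, $K$ itself is Hilbertian, and a classical theorem (see \cite[Chapter 12]{field-arithmetic}, and in particular the fact that finite separable extensions of Hilbertian fields are Hilbertian) transfers Hilbertianity to the finite extension $L$. In characteristic $0$ this step is routine.

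Third, I would invoke the theorem of Roquette/Fried--V\"olklein: the absolute Galois group of any Hilbertian PAC field is $\omega$-free, i.e., every finite (split) embedding problem over $L$ is solvable. This is proved in \cite[Theorem 27.4.8 and surrounding material]{field-arithmetic}; for countable Hilbertian PAC fields in characteristic $0$ it even identifies $\Gal(L)$ with the free profinite group of countable rank. Applied to $L$, this immediately yields the conclusion.

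The only potential obstacle is bookkeeping about whether the cited results need countability hypotheses: the $\omega$-freeness conclusion itself holds with no cardinality restriction, so this is not a genuine difficulty, but the reference should cite the appropriate general statement in \cite{field-arithmetic}. Otherwise, the proof is essentially a two-line deduction: \emph{$L$ is PAC} (Theorem~\ref{PAC}) and \emph{$L$ is Hilbertian} (Theorem~\ref{Hilb} plus finite-extension stability), together with Fried--V\"olklein, give the theorem.
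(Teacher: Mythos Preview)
Your proposal is correct and follows exactly the same route as the paper's proof: $L$ is PAC by Theorem~\ref{PAC}, $L$ is Hilbertian because finite separable extensions of Hilbertian fields are Hilbertian (the paper cites \cite[Corollary 12.2.3]{field-arithmetic}), and Hilbertian PAC fields are $\omega$-free (the paper cites \cite[Theorem A]{Fri-Vol-Hil-PAC} rather than \cite{field-arithmetic}). Your remark about countability is a reasonable caution but, as you note, not an actual obstacle.
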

\begin{proof}
  By Theorem~\ref{PAC}, $L$ is PAC.  Finite separable extensions of
  Hilbertian fields are Hilbertian \cite[Corollary
    12.2.3]{field-arithmetic}, so $L$ is Hilbertian.  Hilbertian PAC
  fields are $\omega$-free~\cite[Theorem A]{Fri-Vol-Hil-PAC}.
\end{proof}
We will see shortly that $\Gal(K)$ is also $\omega$-free, up to
isomorphism.

Recall that a theory has $\mathrm{TP}_2$, the tree property of the
second kind, if there is a formula $\phi(x,y)$ and a model $M$
containing witnesses $a_\eta$ for $\eta : \omega \to \omega$ and
$b_{i,j}$ for $i,j \in \omega$ such that
\begin{equation*}
  M \models \phi(a_\eta,b_{i,j}) \iff \eta(i) = j.
\end{equation*}
\begin{corollary} \label{tp2}
$C\XF$ has $\mathrm{TP}_2$.
\end{corollary}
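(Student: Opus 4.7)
The plan is to deduce $\mathrm{TP}_2$ of $C\XF$ from the corresponding property of its proper finite extensions, which were just shown to be $\omega$-free PAC fields. Concretely: fix $K \models C\XF$ and, using the preceding theorem that $\mathrm{Gal}(K)$ is unbounded, pick any proper finite extension $L/K$, e.g.\ $L = K(\sqrt a)$ for a non-square $a \in K^\times$. By the previous theorem $L$ is an $\omega$-free PAC field. Moreover, $L$ is interpretable in $K$: coordinatising $L \cong K^n$ with $n=[L:K]$ via a primitive element $\alpha$ with minimal polynomial $p(x) \in K[x]$, the ring operations on $L$ become $K$-definable on $K^n$. Any $\mathrm{TP}_2$ inp-pattern in $L$ therefore pulls back to a $\mathrm{TP}_2$ pattern in $K$ (on $n$-tuples), so it suffices to establish $\mathrm{TP}_2$ for the $\omega$-free PAC field $L$.

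For this last step I would appeal to the standard model theory of PAC fields: the theorem of Chatzidakis characterising simplicity of PAC fields shows that a PAC field is simple if and only if its absolute Galois group is bounded, and non-simple PAC fields are in fact $\mathrm{TP}_2$ (rather than $\mathrm{SOP}$, as PAC fields in characteristic $0$ carry no definable linear order). An $\omega$-free PAC field has a free profinite Galois group of infinite rank, certainly unbounded, so this gives $\mathrm{TP}_2$ of $L$, hence of $K$. If instead one wants a hands-on witness, one uses that $\omega$-freeness makes $L^\times/(L^\times)^2$ infinite-dimensional over $\mathbb{F}_2$ and combines this with the PAC axiom applied to an appropriate family of norm-type varieties, extracting the required array.

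The main obstacle is the final step---producing the explicit inp-pattern in the $\omega$-free PAC field $L$. The delicate point is balancing row-inconsistency (parameters $b_{i,0}, b_{i,1}, \ldots$ lying in pairwise distinct square classes, say) with path-consistency (a single $x$ such that $x\cdot b_{i,\eta(i)}$ is a square for every $i$), since a naive choice of parameters in $L^\times$ forces these two requirements into direct contradiction. One therefore chooses the $b_{i,j}$ as values of a suitable polynomial family and uses the PAC axiom on an auxiliary geometrically integral variety to produce the path-witnesses; alternatively, one simply cites the existing literature on $\mathrm{TP}_2$ in unbounded PAC fields.
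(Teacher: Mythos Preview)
Your proposal is correct and follows essentially the same route as the paper: pass to a proper finite extension $L/K$, note that $L$ is $\omega$-free PAC by the preceding theorem, invoke Chatzidakis's observation that unbounded PAC fields have $\mathrm{TP}_2$, and pull the pattern back to $K$ via interpretability. One minor quibble: your parenthetical justification that non-simple PAC fields must be $\mathrm{TP}_2$ ``rather than $\mathrm{SOP}$'' because they carry no definable linear order is not a valid dichotomy argument (SOP concerns definable partial orders with infinite chains, and non-simplicity does not split cleanly into $\mathrm{TP}_2$ versus SOP), but since you ultimately cite Chatzidakis directly this does no harm.
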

\begin{proof}
  Suppose $M \models C\XF$.  Take any proper finite extension $L/M$.
  Then $L$ is an $\omega$-free PAC field.  An observation by
  Chatzidakis states that for PAC fields, being unbounded implies
  $\mathrm{TP}_2$ ~\cite[Section 3.5]{zoe-note}.  In particular, $L$
  has $\mathrm{TP}_2$.  As $L$ is interpretable in $M$, $M$ also has
  $\mathrm{TP}_2$.
\end{proof}
Recall that a field $K$ is \emph{very slim} if field-theoretic and model-theoretic algebraic closure agree in $K$ and its elementary extensions~\cite{JK-slim}.
\begin{example}[An algebraically bounded field that is not very slim]\label{eg:alg-bdd}
  Consider the family of genus 2 algebraic curves given
  by \[C_{a,b,c}: y^2=x(x-1)(x-a)(x-b)(x-c),\] for $a,b,c$ distinct
  and different from $0$ and $1$.  Two such curves $C_{a,b,c}$ and
  $C_{a',b',c'}$ over an algebraically closed field $K$ are
  birationally equivalent if and only if some element of $\mathrm{PGL}_2(K)$
  maps $\{0,1,\infty,a,b,c\}$ setwise to $\{0,1,\infty,a',b',c'\}$
  (see the discussion in \cite[Section 2]{igusa-genus-2}, for
  example).  It follows that for fixed $(a_0,b_0,c_0)$, there are no
  more than 720 triples $(a,b,c) \in K^3$ such that $C_{a,b,c}$ and
  $C_{a_0,b_0,c_0}$ are birationally equivalent.
  
  Let $K_0 = \Qq(a_0,b_0,c_0)$, where $a_0,b_0,c_0$ are transcendental
  and independent over $\Qq$, and let $C_0 = C_{a_0,b_0,c_0}$.  Then
  $C_0(K_0)$ is finite by the Mordell conjecture for function fields
  and number fields.  Let $C$ be $C_0$ with the $K_0$-points removed,
  and let $K \supseteq K_0$ be a model of $C\XF$.  We claim that $K$
  is not very slim as a pure field. It suffices to show that the transcendental
  triple $(a_0,b_0,c_0)$ is model-theoretically algebraic over
  $\varnothing$.

  Let $D$ be the set of triples $(a,b,c) \in K^3$ such that the curve
  $C_{a,b,c}$ has only finitely many $K$-points.  This set is
  0-definable in $K$, by elimination of $\exists^\infty$ (see
  Corollary~\ref{exists-infty}).  Then $(a_0,b_0,c_0) \in D$ because
  $C_{a_0,b_0,c_0}(K) = C_0(K)$ equals the finite set $C_0(K_0)$.  It
  remains to show that $D$ is finite.  If $(a,b,c) \in D$, then there
  is a $K$-definable dominant rational map from $C_{a,b,c}$ to $C_0 =
  C_{a_0,b_0,c_0}$, or else axioms $(\mathrm{II}_C)$ would make $C_{a,b,c}(K)$
  be infinite.  The rational map $C_{a,b,c} \dashrightarrow
  C_{a_0,b_0,c_0}$ must be birational, by Riemann-Hurwitz.  By the
  discussion above, there are at most 720 possibilities for $(a,b,c)$,
  and so $|D| \le 720$.
\end{example}

\section{The Galois group}
Continue to fix the field $K_0$, the curve $C$, and the theories $C\XF$ and $C\XF_\forall$, as in the previous section.
\begin{lemma} \label{monster-universal}
  Let $\Mm$ be a monster model of $C\XF$ and let $K$ be a small
  elementary submodel.  Let $F/K$ be a field extension such that $F$
  avoids $C$.  Then there is an embedding $\iota : F \to \Mm$ over $K$
  such that $\iota(F)$ is relatively algebraically closed in $\Mm$.
\end{lemma}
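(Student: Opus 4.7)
The plan is to reduce embedding $F$ to embedding a larger field $F^+\models C\XF$, since elementarity will automatically make the image of $F^+$ relatively algebraically closed in $\Mm$; transitivity of relative algebraic closure will then transfer this property to $\iota(F)$.

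The first step I would take is to observe that $F/K$ is already regular. Because $K \preceq \Mm \models C\XF$, we have $K \models C\XF$, so axiom $(\mathrm{I}_C)$ forces every proper finite extension of $K$ to meet $C$. No such extension can sit inside the $C$-avoiding field $F$, so $K$ is relatively algebraically closed in $F$. Using Theorem~\ref{extender}, I would then produce a regular extension $F^+/F$ with $F^+ \models C\XF$, arranging $|F^+|$ to be smaller than the saturation cardinal of $\Mm$. Transitivity of regularity gives $\Abs(F^+)=\Abs(F)=\Abs(K)$, which coincides with $\Abs(\Mm)$ by $K \preceq \Mm$. Theorem~\ref{completions-1} now yields $F^+ \equiv \Mm$, while model completeness of $C\XF$ upgrades the inclusion $K \subseteq F^+$ to $K \preceq F^+$. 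By saturation of $\Mm$, the elementary type of $F^+$ over $K$ is realized in $\Mm$, producing an elementary embedding $\iota^+ : F^+ \to \Mm$ over $K$. I set $\iota := \iota^+|_F$.

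For the closing step, elementarity gives $\iota^+(F^+) \preceq \Mm$, and since the number of distinct roots of any polynomial is a first-order invariant, $\iota^+(F^+)$ is relatively algebraically closed in $\Mm$. Regularity of $F^+/F$ makes $\iota(F)$ relatively algebraically closed in $\iota^+(F^+)$, and transitivity of relative algebraic closure then gives the conclusion. The main conceptual obstacle is the first one: trying to embed $F$ directly and have $\iota(F)$ be relatively algebraically closed in $\Mm$ looks hard, because any non-trivial $C$-avoiding algebraic extension of $F$ threatens to reappear inside $\Mm$. Passing through the ambient $C\XF$-model $F^+$ sidesteps this completely, since elementarity of $\iota^+$ forces $\Mm$ to omit any algebraic elements over $\iota(F)$ beyond those inside $\iota^+(F^+)$, and regularity of $F^+/F$ ensures there are none.
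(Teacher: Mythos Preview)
Your proof is correct and follows essentially the same route as the paper: extend $F$ to a regular extension $F^+ \models C\XF$ via Theorem~\ref{extender}, use model completeness and saturation to embed $F^+$ elementarily into $\Mm$ over $K$, then conclude by transitivity of relative algebraic closure. The paper's version is slightly leaner: it skips your preliminary observation that $F/K$ is regular and your detour through $\Abs$ and Theorem~\ref{completions-1}, since model completeness alone already gives $K \preceq F^+$ (both being models of $C\XF$), which is all that is needed to invoke saturation.
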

\begin{proof}
  By Theorem~\ref{extender} there is a regular extension $M/F$ such that $M \models
  C\XF$.  Then $M \succeq K$ by model completeness.  Moving everything
  by an isomorphism, we may assume $K \preceq M \preceq \Mm$ by
  saturation.  Then the tower of regular extensions $\Mm \supseteq M
  \supseteq F$ shows that $F$ is relatively algebraically closed in
  $\Mm$.
\end{proof}
For technical reasons, we also need the following confusing fact:
\begin{lemma} \label{mu2}
  In the setting of Lemma~\ref{monster-universal}, suppose we are
  given algebraic closures $K^{\alg}$, $F^{\alg}$, and $\Mm^{\alg}$,
  and choices of embeddings $K^{\alg} \to \Mm^{\alg}$ and $K^{\alg}
  \to F^{\alg}$.  Then there is an embedding $\iota^{\alg} : F^{\alg}
  \to \Mm^{\alg}$ making the diagram commute:
  \begin{equation*}
    \xymatrix{
      & F^{\alg} \ar@{-->}[dr]^{\iota^{\alg}} & \\
      K^{\alg} \ar[ur] \ar[rr] & & \Mm^{\alg} \\
      & F \ar[uu] \ar[dr]^{\iota} & \\
      K \ar[uu] \ar[ur] \ar[rr] & & \Mm \ar[uu]}
  \end{equation*}
\end{lemma}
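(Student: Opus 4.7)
The plan is to define $\iota^{\alg}$ first on the compositum $F \cdot K^{\alg} \subseteq F^{\alg}$ (formed using the given embedding $K^{\alg} \hookrightarrow F^{\alg}$) via the universal property of the tensor product, and then extend algebraically across $F^{\alg}/F \cdot K^{\alg}$. The key reduction is to the claim that $F \cap K^{\alg} = K$ inside $F^{\alg}$; equivalently, that $F/K$ is a regular extension. Once this is established, the prospective restrictions of $\iota$ and $\rho$ trivially agree on $F \cap K^{\alg} = K$ (both being the inclusion), so they can be amalgamated into a single map.

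To verify $F \cap K^{\alg} = K$, I would use the embedding $\iota : F \to \Mm$ supplied by Lemma~\ref{monster-universal}. Since $K \preceq \Mm$, the identity $\acl_{\Mm}(K) = K$ together with Theorem~\ref{acl} shows that $K$ is relatively algebraically closed in $\Mm$ in the field-theoretic sense. For any $\alpha \in F$ algebraic over $K$, the image $\iota(\alpha) \in \Mm$ is algebraic over $K$, hence lies in $K$; since $\iota$ is injective and fixes $K$ pointwise, $\alpha \in K$. Alternatively, axiom $(\mathrm{I}_C)$ gives this directly: if some $\alpha \in F \setminus K$ were algebraic over $K$, then $K(\alpha)$ would be a proper finite extension of $K$, forcing $C(K(\alpha)) \neq \varnothing$ and contradicting that $F$ avoids $C$.

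With $F/K$ regular and $\Chara = 0$, the fields $F$ and $K^{\alg}$ are linearly disjoint over $K$ in $F^{\alg}$, so $F \otimes_K K^{\alg}$ is a field and the multiplication map $F \otimes_K K^{\alg} \to F \cdot K^{\alg}$ is an isomorphism. Writing $\rho : K^{\alg} \to \Mm^{\alg}$ for the given embedding, define the $K$-algebra homomorphism
\begin{equation*}
  \psi : F \otimes_K K^{\alg} \to \Mm^{\alg}, \qquad f \otimes a \mapsto \iota(f) \cdot \rho(a).
\end{equation*}
This is well-defined because $\iota$ and $\rho$ both restrict to the inclusion on $K$, and it is injective because its source is a field. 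Transporting through the isomorphism $F \otimes_K K^{\alg} \cong F \cdot K^{\alg}$, one obtains an embedding $F \cdot K^{\alg} \hookrightarrow \Mm^{\alg}$ that extends $\iota$ on $F$ and $\rho$ on $K^{\alg}$. Finally, since $F^{\alg}/F \cdot K^{\alg}$ is algebraic and $\Mm^{\alg}$ is algebraically closed, a standard extension argument produces $\iota^{\alg} : F^{\alg} \to \Mm^{\alg}$ making the full diagram commute. The only non-routine step is the regularity of $F/K$; everything else is either the universal property of tensor products or the extension of a field embedding across an algebraic extension into an algebraically closed field.
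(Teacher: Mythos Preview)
Your proof is correct but takes a different route from the paper's. The paper first chooses an \emph{arbitrary} extension $\nu : F^{\alg} \to \Mm^{\alg}$ of $\iota$, observes that the composite $K^{\alg} \to F^{\alg} \xrightarrow{\nu} \Mm^{\alg}$ differs from the given embedding $K^{\alg} \to \Mm^{\alg}$ by some $\sigma \in \Gal(K)$, lifts $\sigma$ to $\tilde{\sigma} \in \Gal(\Mm)$ using surjectivity of $\Gal(\Mm) \to \Gal(K)$ (which follows from $K$ being relatively algebraically closed in $\Mm$), and then corrects by setting $\iota^{\alg} = \tilde{\sigma}^{-1} \circ \nu$. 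Your approach instead builds the correct map directly on $F \cdot K^{\alg}$ via the tensor product, exploiting the regularity of $F/K$ to get linear disjointness, and then extends algebraically. Both arguments ultimately rest on the same fact (that $K$ is relatively algebraically closed in $\Mm$, hence in $F$), but you use it to make $F \otimes_K K^{\alg}$ a field, while the paper uses it to lift Galois automorphisms. Your construction is arguably more direct; the paper's twisting argument, on the other hand, foreshadows the Galois-theoretic bookkeeping needed in the application to Theorem~\ref{omega-free}, where one really is comparing two maps that differ by an inner automorphism.
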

\begin{proof}
  Choose any embedding $\nu : F^{\alg} \to \Mm^{\alg}$ extending
  $\iota : F \to \Mm$.  The composition \[K^{\alg} \to F^{\alg}
  \stackrel{\nu}{\to} \Mm^{\alg} \tag{$\ast$} \] must have the
  form \[K^{\alg} \stackrel{\sigma}{\to} K^{\alg} \to \Mm^{\alg}\] for
  some $\sigma \in \Gal(K)$.  As $K$ is relatively algebraically
  closed in $\Mm$, the map $\Gal(\Mm) \to \Gal(K)$ is surjective.
  Therefore there is $\tilde{\sigma} \in \Gal(\Mm)$ extending
  $\sigma$, and the map in ($\ast$) can also be written as \[ K^{\alg}
  \to \Mm^{\alg} \stackrel{\tilde{\sigma}}{\to} \Mm^{\alg}.\] Take
  $\iota^{\alg} = \tilde{\sigma}^{-1} \circ \nu$.
\end{proof}
\begin{lemma} \label{k-f-lemma}
  Let $K$ be a model of $C\XF$.  Suppose $\alpha : H \to G$ is a
  surjective homomorphism of finite groups, and $\beta : \Gal(K) \to
  G$ is a continuous, surjective homomorphism.  Then there is an
  extension $F/K$ such that $F$ avoids $C$, and there is a continuous,
  surjective homomorphism $\gamma : \Gal(F) \to H$ making the diagram
  commute:
  \begin{equation*}
    \xymatrix{ \Gal(F) \ar[d] \ar@{-->}[r]^-{\gamma} & H \ar[d]^{\alpha}
      \\ \Gal(K) \ar[r]_-{\beta} & G.}
  \end{equation*}
\end{lemma}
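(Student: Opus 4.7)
My plan is to construct $F$ as the fixed field of an $H$-action on a rational function field over the Galois $G$-extension of $K$ cut out by $\ker\beta$.

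Let $L \subseteq K^{\alg}$ be the fixed field of $\ker\beta$, so $\beta$ induces an isomorphism $\bar\beta : \Gal(L/K) \cong G$. Take $|H|$ indeterminates $\{t_h\}_{h \in H}$ and set $E := L(t_h : h \in H)$. Define an $H$-action on $E$ by letting $g \in H$ act on $L$ via $\bar\beta^{-1}(\alpha(g)) \in \Gal(L/K)$ and on the indeterminates by the left regular representation $g \cdot t_h = t_{gh}$. This action is faithful (already on the $t_h$'s alone), so by Artin's theorem $F := E^H$ satisfies $E/F$ Galois with $\Gal(E/F) = H$. I take $\gamma$ to be the restriction map $\Gal(F) \twoheadrightarrow \Gal(E/F) = H$ relative to a choice of algebraic closure $F^{\alg} \supseteq E \supseteq K^{\alg}$.

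For commutativity, set $N := \ker\alpha$. Then $N$ acts trivially on $L$, so $L \subseteq E^N$, and $F \cap L = L^H = K$. Counting degrees, $[E^N : F] = [H:N] = |G| = [L \cdot F : F]$, forcing $E^N = L \cdot F$. The induced quotient $H = \Gal(E/F) \twoheadrightarrow \Gal(L \cdot F / F) \cong \Gal(L/K) \xrightarrow{\bar\beta} G$ is precisely $\alpha$ by construction, so $\alpha \circ \gamma$ agrees with $\beta$ composed with the restriction $\Gal(F) \to \Gal(K)$.

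The main content is showing $C(F) = \varnothing$. Galois descent gives $C(F) = C(E)^H$, so it suffices to see $C(E)^H = \varnothing$. A point of $C(E)$ corresponds to a rational map $\Aa^{|H|}_L \dashrightarrow C$, which after base-change to $L^{\alg}$ and composition with $C \hookrightarrow \tilde{C}$ becomes a rational map from a rational variety to a smooth projective curve of genus $\geq 2$; such a map must be constant, since otherwise $\tilde{C}_{L^{\alg}}$ would be unirational and hence rational by L\"uroth's theorem, contradicting $g(C) \geq 2$. As the map is defined over $L$, the constant value lies in $C(L)$, so $C(E) = C(L)$. Finally $H$ acts on $C(L)$ through $\alpha(H) = G$, and a second application of Galois descent gives $C(L)^G = C(L^G) = C(K) = \varnothing$. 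This hyperbolic step ruling out non-constant rational maps from a rational variety to a higher-genus curve is the main obstacle, but it is a classical consequence of the same genus-$\geq 2$ hypothesis already driving the rest of $C\XF$.
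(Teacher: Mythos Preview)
Your proof is correct and follows essentially the same construction as the paper: take $F$ to be the $H$-fixed subfield of $L(t_h : h \in H)$ with $H$ acting on $L$ through $\alpha$ and on the indeterminates by the regular representation, then use Artin's theorem and Galois descent. The only minor difference is that where the paper invokes its Lemma~\ref{lem:factor} together with Riemann--Hurwitz to show $C(L(\bar t)) = C(L)$, you instead appeal to L\"uroth's theorem to rule out non-constant maps from $\Aa^n$ to a curve of genus $\ge 2$; both justifications are standard and equally valid.
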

\begin{proof}
  Let $L$ be the fixed field of $\ker(\beta)$.  By Galois theory,
  $L/K$ is a finite Galois extension with $\Gal(L/K) \cong G$.  In
  particular, $G$ acts on $L$.  Let $L(\bar{t}) = L(t_h : h \in H)$ be
  obtained by adjoining $|H|$-many new transcendental elements $t_h$
  to $L$.  The group $H$ acts on $L$ via $\beta : H \to G$, and it
  acts on the generators $t_h$ via $h_2 \cdot t_{h_1} := t_{h_2h_1}$.
  The action of $H$ on $L(\bar{t})$ is faithful.  Let $F$ be the fixed
  field of $H$ in $L(\bar{t})$.  Then $L(\bar{t})/F$ is a finite Galois
  extension with Galois group $H$, and the following diagram commutes:
  \begin{equation*}
    \xymatrix{ \Gal(F) \ar[r] \ar[d] & \Gal(L(\bar{t})/F) \ar[d]
      \ar[r]^-{\cong} & H \ar[d]^{\alpha} \\
      \Gal(K) \ar[r] & \Gal(L/K) \ar[r]^-{\cong} & G.}
  \end{equation*}
  It remains to see that $F$ avoids $C$.  Note that $L(\bar{t})$ is
  the function field of $\Aa^n$ over $L$, where $n = |H|$.  The curve $C$ has positive genus, so there are no non-constant rational maps from $\Aa^1$ to $C$ by Riemann-Hurwitz.  By Lemma~\ref{lem:factor},
  any $L$-definable rational map $\Aa^n \dashrightarrow C$ is
  constant.  In terms of fields (see Fact~\ref{rational-ff}), this means that $C(L(\bar{t})) =
  C(L)$.  Taking $H$-invariants,
  \begin{equation*}
    C(F) = C(L(\bar{t})^H) = C(L(\bar{t}))^H = C(L)^H = C(L^H) = C(K)
    = \varnothing. \qedhere
  \end{equation*}
\end{proof}
\begin{theorem} \label{omega-free}
  If $K \models C\XF$, then $\Gal(K)$ is $\omega$-free: for any
  surjective homomorphism of finite groups $\alpha : H \to G$ and any
  continuous surjective homomorphism $\beta : \Gal(K) \to G$, there is
  a continuous surjective homomorphism $\gamma : \Gal(K) \to H$ making
  the diagram commute:
  \begin{equation*}
    \xymatrix{ \Gal(K) \ar@{-->}[r]^-{\gamma} \ar[dr]_-{\beta} & H \ar[d]^{\alpha} \\ & G.}
  \end{equation*}
\end{theorem}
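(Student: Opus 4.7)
The plan is to first obtain a lift of $\beta$ over a larger extension $F$ via Lemma~\ref{k-f-lemma}, transfer it to a monster model $\mathbb{M} \succeq K$ using Lemmas~\ref{monster-universal} and~\ref{mu2}, and then descend the lift back to $K$ using the model-completeness of $C\XF$.

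Applying Lemma~\ref{k-f-lemma} to $\alpha$ and $\beta$ yields an extension $F/K$ with $F \models C\XF_\forall$ and a continuous surjection $\gamma_F : \Gal(F) \to H$ such that $\alpha \circ \gamma_F$ equals $\beta$ composed with the restriction $\Gal(F) \to \Gal(K)$. Next, I take a monster model $\mathbb{M} \models C\XF$ containing $K$ as an elementary submodel, use Lemma~\ref{monster-universal} to embed $F \hookrightarrow \mathbb{M}$ over $K$ so that $F$ is relatively algebraically closed in $\mathbb{M}$, and use Lemma~\ref{mu2} to choose compatible embeddings $K^{\alg} \hookrightarrow F^{\alg} \hookrightarrow \mathbb{M}^{\alg}$. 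Since $F^{\alg} \cap \mathbb{M} = F$, a standard argument shows that the restriction $\mathrm{res}_{\mathbb{M}/F} : \Gal(\mathbb{M}) \to \Gal(F)$ is surjective: any $\sigma \in \Gal(F)$ extends to $\Gal(\mathbb{M} \cdot F^{\alg}/\mathbb{M}) \cong \Gal(F)$, and then to $\Gal(\mathbb{M})$. Setting $\gamma_{\mathbb{M}} := \gamma_F \circ \mathrm{res}_{\mathbb{M}/F}$ produces a continuous surjection $\Gal(\mathbb{M}) \to H$ satisfying $\alpha \circ \gamma_{\mathbb{M}} = \beta \circ \mathrm{res}_{\mathbb{M}/K}$, by the compatibility of the restriction maps guaranteed by Lemma~\ref{mu2}.

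To descend from $\mathbb{M}$ to $K$, observe that the existence of a lift of $\beta$ to $H$---equivalently, the existence of a finite Galois extension $N/K$ of degree $|H|$, containing the Galois extension $L := (K^{\alg})^{\ker \beta}$, with $\Gal(N/K) \cong H$ and restriction $\Gal(N/K) \to \Gal(L/K)$ matching $\alpha$---can be encoded as an existential first-order sentence about $K$. The parameters (the coefficients of a defining polynomial of $L/K$) all lie in $K$, since $L/K$ is finite Galois with $\Gal(L/K) \cong G$. The sentence holds in $\mathbb{M}$: a witness is the polynomial $Q \in \mathbb{M}[x]$ defining the Galois extension corresponding to $\gamma_{\mathbb{M}}$. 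Since $K \preceq \mathbb{M}$ (by model-completeness of $C\XF$), the sentence also holds in $K$, yielding the desired Galois extension $N/K$ and hence the continuous surjection $\gamma : \Gal(K) \to H$ making the diagram commute.

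The main obstacle lies in the final descent step: one must carefully check that the existence of an $H$-Galois lift of $\beta$ is genuinely a first-order existential statement with parameters only from $K$. This requires encoding the Galois action of $H$ on the roots of $Q$, as well as the compatibility of the restriction $\Gal(N/K) \to \Gal(L/K)$ with $\alpha$. Both are finite combinatorial data determined by $H$, $G$, and $\alpha$, so the encoding is in principle routine, but it genuinely exploits the finiteness of $H$; the profinite $\Gal(K)$ itself is not an elementary invariant of $K$, only its finite quotients are.
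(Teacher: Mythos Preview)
Your proposal is correct and uses the same ingredients as the paper's proof (Lemmas~\ref{k-f-lemma}, \ref{monster-universal}, \ref{mu2}), but arranges them in a slightly different order. The paper observes at the outset that ``$\Gal(K)$ is $\omega$-free'' is a first-order property, so it immediately replaces $K$ by a monster model $\Mm$; it then picks a small $K' \preceq \Mm$ over which the given $G$-extension is defined, applies Lemma~\ref{k-f-lemma} at $K'$, and embeds the resulting $F$ back into $\Mm$ to produce $\gamma : \Gal(\Mm) \to H$ directly---no descent is needed. You instead keep the original $K$, build $F/K$ and go up to $\Mm$, and then descend by explicitly encoding the existence of an $H$-Galois lift as a first-order statement over $K$. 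Your final descent step is exactly the content of the paper's opening sentence ``$\omega$-free is first-order,'' just unpacked; conversely, the paper's need to locate a small $K' \preceq \Mm$ carrying the $G$-extension is the step you avoid. Neither organization buys anything substantial over the other.
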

\begin{proof}
  The property ``$\Gal(K)$ is $\omega$-free'' is a conjunction of first-order sentences, via the usual means of interpreting Galois extensions of $K$ in $K$.  Consequently, we
  may replace $K$ with a monster model $\Mm$.
  
  We are given a
  continuous surjective homomorphism $\beta : \Gal(\Mm) \to G$.  Let
  $\mathbb{L}$ be the fixed field of $\ker(\beta)$.  By Galois theory,
  $\mathbb{L}/\Mm$ is a finite Galois extension, $\Gal(\mathbb{L}/\Mm)
  \cong G$, and $\Gal(\Mm) \to G$ is the composition
  \begin{equation*}
    \Gal(\Mm) \to \Gal(\mathbb{L}/\Mm) \stackrel{\cong}{\to} G.
  \end{equation*}
  In particular, there is a natural action of $G$ on $\mathbb{L}$.

  Let $K \preceq \Mm$ be a small model over which the extension
  $\mathbb{L}$ is defined.  Then there is a finite Galois extension
  $L/K$ such that $\mathbb{L} = L \otimes_K \Mm$ and $\Gal(\mathbb{L}/\Mm)
  \cong \Gal(L/K) \cong G$.  Then $G$ again acts on $L$.  Thus $\beta
  : \Gal(\Mm) \to G$ factors through $\Gal(K)$:
  \begin{equation*}
    \xymatrix{ \Gal(\Mm) \ar[dr]^{\beta} \ar[d] & \\ \Gal(K) \ar[r] & G.}
  \end{equation*}
  By Lemma~\ref{k-f-lemma}, there is an extension $F/K$ and a
  commuting diagram
  \begin{equation*}
    \xymatrix{ \Gal(F) \ar[d] \ar[r] & H \ar[d]^{\alpha}
      \\ \Gal(K) \ar[r] & G.}
  \end{equation*}
  where the horizontal arrows are surjective.  Moreover,
  Lemma~\ref{k-f-lemma} lets us arrange for $C(F) = \varnothing$.  By
  Lemma~\ref{monster-universal}, we may move $F$ by an isomorphism,
  and assume that $K \subseteq F \subseteq \Mm$ with $F$ relatively
  algebraically closed in $\Mm$.  Then we get a commuting diagram
  \begin{equation*}
    \xymatrix{ \Gal(\Mm) \ar[dr]^{\gamma} \ar[d] & \\ \Gal(F) \ar[r] \ar[d] & H
      \ar[d]^{\alpha} \\ \Gal(K) \ar[r] & G.}
  \end{equation*}
  All the maps are surjective, because $\Mm/F$ and $\Mm/K$ are regular
  extensions.  This completes the proof.
\end{proof}
\begin{remark}
  Technically we also need to apply Lemma~\ref{mu2} in the proof of
  Theorem~\ref{omega-free}.  The problem is that $\Gal(K)$ depends
  functorially on the pair $(K,K^{\alg})$, and cannot be made to
  depend functorially on $K$ alone.  Before moving $F$ we already have
  maps $\Gal(\Mm) \to \Gal(K)$ and $\Gal(F) \to \Gal(K)$, which must have
  come from embeddings $(K,K^{\alg}) \to (\Mm,\Mm^\alg)$ and
  $(K,K^{\alg}) \to (F,F^{\alg})$.  The proof of
  Theorem~\ref{omega-free} implicitly assumes that the following diagram
  commutes:
  \begin{equation*}
    \xymatrix{ & \Gal(F) \ar[dr] & \\ \Gal(K) \ar[rr] \ar[ur] & & \Gal(\Mm).} \tag{$\ast$}
  \end{equation*}
  In order for this to work, we need a commuting diagram of pairs
  \begin{equation*}
    \xymatrix{ & (F,F^{\alg}) \ar[dr] & \\ (K,K^\alg) \ar[rr] \ar[ur]
      & & (\Mm,\Mm^{\alg}),}
  \end{equation*}
  which is what Lemma~\ref{mu2} provides.  To put it another way, if
  we chose the embedding $F^{\alg} \to \Mm^{\alg}$ carelessly, then
  the diagram ($\ast$) might be off by an inner automorphism.
\end{remark}

\section{NSOP$_4$} \label{nsop4-section}
Recall that for $n \ge 3$, a theory has $\mathrm{SOP}_n$ if there is a formula $\phi(x,y)$ and a model $M$ containing witnesses $a_0, a_1, a_2, a_3, \ldots$, such that
\begin{equation*}
  M \models \phi(a_i,a_j) \text{ for } i < j,
\end{equation*}
but the partial type
\begin{equation*}
  \{\phi(x_1,x_2),\phi(x_2,x_3),\ldots,\phi(x_{n-1},x_n),\phi(x_n,x_1)\}
\end{equation*}
is inconsistent.  Otherwise, the theory is $\NSOP_n$.  In this section, we show that $C\XF$ has $\NSOP_4$.  For simplicitly, we restrict to the case where $K_0 = \Qq$, though the proof easily generalizes.

Let $\omega\PAC_0$ be the theory of $\omega$-free PAC fields of
characteristic 0.  Recall the $Sol_n$ predicates from
Corollary~\ref{cor:near-qe}.  Let $\mathcal{L}_{Sol}$ be the language
of rings expanded by $Sol_n$. We need the following facts; (1) is~\cite[Theorem~27.2.3]{field-arithmetic} and (2) follows from~\cite{CH-bounded}.
\begin{fact} \phantomsection \label{pac-facts-A}
  \begin{enumerate}
  \item $\omega\PAC_0$ is the model companion of the theory of fields
    of characteristic 0 in the language $\mathcal{L}_{Sol}$.
  \item Model-theoretic algebraic closure agrees with field-theoretic
    relative algebraic closure in $\omega\PAC_0$.
  \end{enumerate}
\end{fact}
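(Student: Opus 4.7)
The plan is to treat each part of the Fact as an application of known results about PAC fields, with the bulk of the work essentially contained in the cited references. So the proof proposal amounts to indicating which pieces of the PAC literature one would assemble, and sketching why those pieces suffice.

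For part (1), I would begin by showing that every field $K$ of characteristic 0 embeds into an $\omega$-free PAC field of characteristic 0, which establishes that $\omega\PAC_0$ is "cofinal" among fields of characteristic 0 in the sense needed for a model companion. This is done by an iterated construction: alternately adjoin generic points of geometrically integral varieties (which in the limit produces the PAC property) and arrange the Galois group so that every finite embedding problem can be solved (producing $\omega$-freeness). Next, I would axiomatize $\omega\PAC_0$ in $\mathcal{L}_{Sol}$: the PAC property is captured by the standard scheme asserting a $K$-point on every geometrically integral variety, and $\omega$-freeness is captured by asserting, for every pair of finite groups $H \twoheadrightarrow G$ and every Galois extension of $K$ with group $G$, that a Galois extension with group $H$ exists lifting the given one; the $Sol_n$ predicates allow us to speak of Galois extensions by referring to irreducible factors of polynomials. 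Finally, I would establish model completeness: if $K \subseteq L$ are both $\omega$-free PAC fields of characteristic 0 with the inclusion preserving $Sol_n$ for every $n$, then $K$ is relatively algebraically closed in $L$ (by Lemma~\ref{rel-closure-sol} applied inside a perfect base), and the elementary equivalence theorem for $\omega$-free PAC fields of the same characteristic then yields $K \prec L$. This is essentially the content of the cited \cite[Theorem~27.2.3]{field-arithmetic}.

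For part (2), I would show that in any $M \models \omega\PAC_0$, for $A \subseteq M$, the model-theoretic algebraic closure $\acl(A)$ equals the relative algebraic closure $A^{\alg} \cap M$. The inclusion $A^{\alg}\cap M \subseteq \acl(A)$ is automatic since any $b \in A^{\alg}\cap M$ satisfies a polynomial over $A$ with finitely many roots. For the reverse, suppose $b \in M \setminus (A^{\alg} \cap M)$; the goal is to produce infinitely many realizations of $\tp(b/A)$ in some elementary extension. The strategy is to use the richness of $\Gal(M)$: choose a countable $A$-elementary submodel $M_0 \ni b$, and use $\omega$-freeness of $\Gal(M)$ together with the PAC property to construct many embeddings $M_0 \to M^*$ over $A$ (where $M^* \succeq M$ is sufficiently saturated) whose images of $b$ are pairwise distinct. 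This is the approach systematized in \cite{CH-bounded} for bounded PAC fields, where the Galois group controls automorphisms on the model-theoretic side.

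The main obstacle in part (1) is the elementary equivalence theorem itself, whose proof in \cite{field-arithmetic} is quite involved, relying on a Galois-theoretic characterization of elementary equivalence between PAC fields via their absolute numbers together with isomorphisms of their Galois groups as profinite groups acting on the algebraic closure. The main obstacle in part (2) is verifying that the constructed conjugates of $b$ can be realized inside a \emph{single} elementary extension, which requires using saturation together with the fact that the Galois action on $M^{\alg}$ is "model-theoretically visible" in $\omega$-free PAC fields; this is where one genuinely uses more than boundedness.
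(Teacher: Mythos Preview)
The paper does not give a proof of this Fact at all: it simply records that (1) is \cite[Theorem~27.2.3]{field-arithmetic} and that (2) follows from \cite{CH-bounded}, and moves on. Your proposal is therefore not in conflict with the paper's approach---you are just unpacking what lies behind those citations---and your sketch for (1) is an accurate summary of how the cited result is proved.

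One small correction on (2): you frame the argument as specific to $\omega$-free PAC fields and say that ``one genuinely uses more than boundedness,'' but in fact the statement and its proof hold uniformly for \emph{all} perfect PAC fields, bounded or not, and $\omega$-freeness plays no role. The cleanest argument is the amalgamation trick used for $C\XF$ in Theorem~\ref{acl}: if $K$ is relatively algebraically closed in $M$, take a copy $M'$ of $M$ over $K$, form $\Frac(M \otimes_K M')$, and pass to a regular PAC extension $N$; model completeness gives $M \preceq N \succeq M'$, and $M \cap M' = K$ forces $\acl_M(K) \subseteq K$. Your proposed route via constructing many embeddings using the Galois action would also work, but it is more laborious than necessary.
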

Note that the statement of (2) also holds with $C$XF in place of $\omega\PAC_0$ by Theorem~\ref{acl}.  From
Fact~\ref{pac-facts-A}(1), we see that for any field $K$ of
characteristic 0, there is some $M \models \omega\PAC_0$ and an
embedding $K \to M$ of $\mathcal{L}_{Sol}$-structures.  Equivalently,
every field of characteristic 0 has a \emph{regular} extension that is
an $\omega$-free PAC field.  Our strategy for proving NSOP$_4$ for
$C\XF$ is to embed a model $M \models C\XF$ in a regular extension
$\hat{M} \models \omega\PAC_0$ and use Chatzidakis's work on
independence in $\omega\PAC_0$.
\begin{fact} \label{pac-facts-B}
  $\omega\PAC_0$ has quantifier elimination in $\mathcal{L}_{Sol}$.
\end{fact}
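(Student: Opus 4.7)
My plan is to apply the standard isomorphism criterion for quantifier elimination: in sufficiently saturated models $M_1,M_2 \models \omega\PAC_0$, I must show that any isomorphism $f : A_1 \to A_2$ between small $\mathcal{L}_{Sol}$-substructures $A_i \subseteq M_i$ is partial elementary. This proceeds in three steps: extend $f$ to fraction fields, then to relative algebraic closures, then invoke a classical transfer principle for $\omega$-free PAC fields.

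First I would extend $f$ to an isomorphism $\Frac(A_1) \to \Frac(A_2)$. The $Sol_n$ predicates are preserved because multiplying all coefficients of a polynomial by a common nonzero scalar does not change whether it has a root, so one can clear denominators exactly as in the proof of Corollary~\ref{cor:near-qe}. Next, applying Lemma~\ref{rel-closure-sol}, the extended $f$ lifts to an isomorphism $g : K_1 \to K_2$ between the relative algebraic closures $K_i := \Frac(A_i)^{\alg} \cap M_i$. By Fact~\ref{pac-facts-A}(2) these agree with $\acl_{M_i}(A_i)$, so $g$ is an isomorphism of (model-theoretically) algebraically closed subfields.

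It then remains to show that $g$ is partial elementary between $M_1$ and $M_2$, which is the crux of the argument. Here I would appeal to the classical transfer principle for $\omega$-free PAC fields of characteristic $0$: the complete type of such a field over its subfield of algebraic elements is determined by the isomorphism type of that subfield alone. The underlying reason is that, for general PAC fields, the elementary theory is controlled by the relative algebraic closure together with the absolute Galois group (acting on the algebraic part), while every $\omega$-free PAC field in characteristic $0$ has absolute Galois group isomorphic to the free profinite group $\hat{F}_\omega$ on countably many generators, canonically. Applied over the common copy $K = g(K_1) = K_2$, this yields $M_1 \equiv_K M_2$ after moving by $g$; this in turn gives partial elementarity of $f$.

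The main obstacle is the last step: invoking the non-trivial elementary equivalence theorem for $\omega$-free PAC fields, which is established in \cite{field-arithmetic} (see, in particular, the material surrounding Theorem~27.2.3 there, together with the Fried--V\"olklein theorem that countable Hilbertian PAC fields of characteristic $0$ have absolute Galois group $\hat F_\omega$). Once this classical input is granted, quantifier elimination follows by the routine back-and-forth argument built out of Steps~1 and~2.
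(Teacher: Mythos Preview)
Your proposal is correct, and the first two reduction steps (to fraction fields, then to relative algebraic closures via Lemma~\ref{rel-closure-sol}) match the paper exactly. The difference lies in the final step.

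You establish $M_1 \equiv_K M_2$ by invoking the elementary equivalence theory for $\omega$-free PAC fields from \cite{field-arithmetic}, relying on the fact that all such fields have absolute Galois group $\hat{F}_\omega$ together with its strong homogeneity properties. (Your phrasing ``over its subfield of algebraic elements'' is slightly off; what you actually need, and what you use, is the version over an arbitrary relatively algebraically closed subfield $K$.) The paper instead argues by amalgamation: since $M_1, M_2$ are regular over $K$, the field $\Frac(M_1 \otimes_K M_2)$ is regular over each $M_i$ and embeds regularly into some $N \models \omega\PAC_0$; then model completeness (Fact~\ref{pac-facts-A}(1)) gives $M_1 \preceq N \succeq M_2$. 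This is precisely the proof of Lemma~\ref{lem:partial-elem} transplanted from $C\XF$ to $\omega\PAC_0$, and it uses only inputs already on the table. Your route is valid but imports heavier Galois-theoretic machinery; the paper's is shorter, self-contained, and highlights the parallel with the $C\XF$ argument.
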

This appears in unpublished notes of Cherlin, van den Dries and
Macintyre~\cite[Theorem~41]{reg-closed}, but it can also be deduced easily from
Fact~\ref{pac-facts-A}(1) via the proof strategy of
Corollary~\ref{cor:near-qe}, as follows.
\begin{proof}[Proof of Fact~\ref{pac-facts-B}]
  Suppose $M_1, M_2$ are models of $\omega\PAC_0$, and $A_i$ is a
  substructure of $M_i$ for $i = \{1,2\}$.  Suppose $f : A_1 \to A_2$
  is an isomorphism in the language $\mathcal{L}_{Sol}$.  We must show
  that $f$ is a partial elementary map.  As in the proof of
  Corollary~\ref{cor:near-qe}, we reduce to the case where $A_i$ is a
  subfield, relatively algebraically closed in $M_i$.  Moving by an
  isomorphism, we may assume $A_1 = A_2 =: A$ and $f = \id_A$.  Then
  $M_1$ and $M_2$ are regular extensions of $A$, so $\Frac(M_1
  \otimes_A M_2)$ is a regular extension of $A$, $M_1$, and $M_2$.
  Take a regular extension $N$ of $\Frac(M_1 \otimes_A M_2)$ such that
  $N \models \omega\PAC_0$, as discussed above.  Then $N$ is a regular
  extension of $M_i$ for $i = 1,2$, so the embedding $M_i \to N$ is an
  embedding of $\mathcal{L}_{Sol}$-structures.  Then $M_1 \preceq N
  \succeq M_2$ by model completeness of $\omega\PAC_0$
  (Fact~\ref{pac-facts-A}(1)), and we conclude that $\id_A$ is a
  partial elementary map from $M_1$ to $M_2$.
\end{proof}
  
Next, we turn to the notion of independence in $\omega\PAC_0$.  For a
given sufficiently saturated model $M \models \omega\PAC_0$ and small
subsets $A,B,C\subseteq M$, we use $\ind^{\ACF}$ to denote forking
independence in $\ACF$, namely $A\ind^{\ACF}_C B$ holds if $A\ind_C B$
holds in $M^{\alg}$. The following definition comes
from~\cite[(1.2)]{zoe-omega-forking}, where it is called \emph{weak
independence}.

For (model-theoretically) relatively algebraically closed subsets $A,B\supseteq C$ in $M$, we define $A\ind^{I}_C B$ to mean that 
\[
A\ind^{\ACF}_C B \text{ and } A^{\alg}B^{\alg}\cap M =AB.
\]
By \cite[Lemma~2.8(3)]{zoe-omega-forking} this is equivalent to
\[
A\ind^{\ACF}_C B \text{ and } B^{\alg} \cap A^{\alg}\acl(AB) = C^{\alg}B.\]
For general subsets $A,B,C\subseteq M$, $A
\ind^I_C B$ is defined to be 
\[
\acl(AC)\ind^I_{\acl(C)}\acl(BC).
\]
\begin{fact}
When $M$ is a sufficiently saturated $\omega$-free PAC field of characteristic $0$, $\ind^I$
satisfies the following properties. 
\begin{itemize}
\item Invariance: $A\ind^I_C B$ iff $
\sigma(A)\ind^I_{\sigma(C)}\sigma(B)$ for $\sigma \in \Aut(M)$.
\item Symmetry: $A\ind^I_C B$ iff $B\ind^I_C A$.
\item Existence: $A \ind^I_B B$ always holds.
\item Monotonicity: $A\ind^I_C B$ and $A_0\subseteq A, B_0\subseteq B$ implies that $A_0\ind^I_C B_0$.
\item Strong finite character: If $a
  \nind^I_c b$ for some tuples $a, b, c$ (possibly infinite) then
  there is a formula $\phi(x,b,c) \in \tp(a/bc)$ such that $a'
  \nind^I_c b$ for any $a'$ satisfying $\phi(x,b,c)$.
\end{itemize}
\end{fact}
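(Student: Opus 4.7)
The plan is to verify each of the five clauses by unpacking the definition of $\ind^I$ and reducing to standard properties of $\ind^{\ACF}$ in $\ACF_0$, together with the identification $\acl(A) = A^{\alg} \cap M$ provided by Fact~\ref{pac-facts-A}(2) and quantifier elimination in $\mathcal{L}_{Sol}$ (Fact~\ref{pac-facts-B}). These verifications are essentially carried out in~\cite{zoe-omega-forking}, and below I sketch how the arguments proceed.

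First I would dispose of \textbf{invariance} and \textbf{symmetry}. Both clauses in the definition of $\ind^I$---the $\ind^{\ACF}$ relation and the intersection equality $A^{\alg}B^{\alg}\cap M = AB$---are manifestly symmetric in $A$ and $B$ and commute with automorphisms of $M$, since the latter preserve both $(-)^{\alg}$ and $\acl$. For \textbf{existence}, unfolding $A \ind^I_B B$ reduces to the closed-set assertion $\acl(AB) \ind^{\ACF}_{\acl(B)} \acl(B)$ (trivial, since the right-hand side equals the base) together with $\acl(AB)^{\alg}\acl(B)^{\alg} \cap M = \acl(AB)$; as $\acl(B) \subseteq \acl(AB)$, the intersection collapses to $\acl(AB)^{\alg} \cap M$, which equals $\acl(AB)$ by Fact~\ref{pac-facts-A}(2).

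For \textbf{monotonicity}, I would reduce to the case where $A,B,C$ are relatively algebraically closed and contain $C$. The $\ind^{\ACF}$-clause restricts to subsets by monotonicity of forking in $\ACF_0$. For the intersection clause, given $A_0 \subseteq A$ and $B_0 \subseteq B$, one has
\[
A_0^{\alg}B_0^{\alg} \cap M \ \subseteq\ A^{\alg}B^{\alg} \cap M \ =\ AB,
\]
and a short field-theoretic argument using the alternative formulation from~\cite[Lemma~2.8(3)]{zoe-omega-forking} then yields equality with $A_0B_0$.

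The main obstacle is \textbf{strong finite character}, since it requires producing a first-order formula witnessing $\nind^I$. Given $a \nind^I_c b$, the failure arises either from $\acl(ac) \nind^{\ACF}_{\acl(c)} \acl(bc)$, or from the existence of an element $x \in \acl(ac)^{\alg}\acl(bc)^{\alg} \cap M$ lying outside $\acl(ac)\acl(bc)$. The first case is already first-order over $\clring$: a polynomial relation over $bc$ satisfied by $a$ that is not satisfied generically over $c$ will force $\trdg(a'/bc) < \trdg(a'/c)$ for any $a'$ satisfying it, hence $a' \nind^{\ACF}_c b$. The second case is captured by $Sol_n$-predicates encoding the algebraicity of $x$ over polynomial expressions in $a,b,c$, combined with the failure of $x \in \acl(ac)\acl(bc)$ (itself expressible via $Sol_n$); by quantifier elimination in $\mathcal{L}_{Sol}$ this obstruction is genuinely expressible by an $\mathcal{L}_{Sol}$-formula $\phi(x,b,c) \in \tp(a/bc)$, which then witnesses $\nind^I_c b$ against any $a'$ satisfying it.
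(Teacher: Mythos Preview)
The paper does not actually prove this Fact: it cites \cite[Lemma~6.2]{artem-nick} for strong finite character and asserts that the remaining properties ``are easily verified from the two definitions of $\ind^I$ given above.''  Your treatment of invariance, symmetry, existence, and monotonicity is at the same level of detail and uses the same ingredients (the two equivalent definitions and Fact~\ref{pac-facts-A}(2)), so on those points you match the paper.

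Your sketch of strong finite character, however, has a genuine gap in the second case.  You assert that the failure of the intersection condition is ``expressible via $Sol_n$'' and then invoke quantifier elimination in $\mathcal{L}_{Sol}$, but quantifier elimination only tells you that $a \nind^I_c b$ is determined by $\qftp_{\mathcal{L}_{Sol}}(a/bc)$, i.e.\ that the non-independence locus is a union of complete types over $bc$.  Strong finite character requires more: that this locus is \emph{open} in the type space, i.e.\ that each instance of failure is witnessed by a single formula.  The obstacle is that the compositum $\acl(ac)\acl(bc)$ is an infinite field varying with $a$, so ``there exists $x \in \acl(abc)$ with $x \notin \acl(ac)\acl(bc)$'' is not manifestly a finite condition on $a$ over $bc$; nothing in your sketch rules out needing infinitely many $Sol_n$-conditions to pin it down.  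The argument in \cite{artem-nick} handles this by passing to the alternative characterization $B^{\alg} \cap A^{\alg}\acl(AB) = C^{\alg}B$, which isolates a \emph{fixed} algebraic element $y$ over $bc$ whose membership in $\acl(ac)^{\alg}\acl(abc)$ is then controlled by finitely many solvability conditions on the coefficients of its minimal polynomial.  Without making that reduction explicit, your second case is not a proof.
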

Strong finite character is \cite[Lemma~6.2]{artem-nick}, and the other properties are easily verified from the two definitions of $\ind^I$ given above.
\begin{lemma} \label{indiscernible-independence}
  If $a_1, a_2, a_3, \ldots, b_1, b_2$ is an indiscernible sequence in
  an $\omega$-free PAC field $M$, then $b_1 \ind^I_{\bar{a}} b_2$.
\end{lemma}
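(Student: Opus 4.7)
The plan is to unpack $\ind^I$ into its two defining conditions and verify each separately. Set $E=\acl(\bar{a})$, $A=\acl(\bar{a}b_1)$, and $B=\acl(\bar{a}b_2)$. The statement $b_1\ind^I_{\bar{a}}b_2$ then amounts to (i) $A\ind^{\ACF}_E B$ together with (ii) $A^{\alg}B^{\alg}\cap M = AB$, i.e.\ $AB$ is relatively algebraically closed in $M$.

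Condition (i) follows from stability of $\ACF$. Since $a_1,a_2,\ldots,b_1,b_2$ is indiscernible, extend it to an infinite $\bar{a}$-indiscernible sequence $(b_n)_{n\geq 1}$ in $M$. Its reduct to $\ACF$ is an $\ACF$-indiscernible sequence over $\acl^{\ACF}(\bar{a})$, and in a stable theory every indiscernible sequence is a Morley sequence and hence pairwise independent. This gives $b_1\ind^{\ACF}_{\bar{a}}b_2$, and closing both sides under relative algebraic closure in $M$ yields $A\ind^{\ACF}_E B$.

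For condition (ii) I would argue by strong finite character. Assume toward contradiction that $b_1\nind^I_{\bar{a}}b_2$. Strong finite character produces a formula $\phi(x,b_2,\bar{a})\in\tp(b_1/\bar{a}b_2)$ such that every realization $b_1'$ of $\phi$ satisfies $b_1'\nind^I_{\bar{a}}b_2$. By $\bar{a}$-indiscernibility of $(b_n)_{n\geq 1}$, we have $(b_i,b_j)\equiv_{\bar{a}}(b_1,b_2)$ for all $i<j$, so $b_i\models\phi(x,b_j,\bar{a})$, and hence $b_i\nind^I_{\bar{a}}b_j$ for every $i<j$. The plan is then to derive a contradiction by producing, in some elementary extension of $M$, an $\bar{a}$-indiscernible sequence $(b_n^{\ast})_{n\geq 1}$ with the same $\bar{a}$-EM-type as $(b_n)$ but whose pairs are $\ind^I$-independent; combined with uniqueness of the $\bar{a}$-EM-type this contradicts the uniform failure witnessed by $\phi$. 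The construction of $(b_n^{\ast})$ uses $\omega$-freeness of $\Gal(M)$ together with the PAC property, which jointly allow one to amalgamate arbitrarily long chains of $\ACF$-independent relatively closed regular extensions of $E$ inside an $\omega$-free PAC field. The main obstacle is precisely this amalgamation step; an alternative route is to invoke that $\omega\PAC_0$ is $\NSOP_1$ and that $\ind^I$ coincides there with Kim-independence, after which pairwise independence of indiscernible Morley sequences is automatic from the general NSOP$_1$ theory.
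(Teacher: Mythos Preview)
Your argument for condition (i) is fine, but there is a genuine gap in your treatment of condition (ii), and the paper's proof closes it with a single observation you are missing.

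After invoking strong finite character to get $\phi(x,b_2,\bar a)\in\tp(b_1/b_2\bar a)$ witnessing the failure, you substitute other $b_j$'s for $b_2$ and conclude $b_i\nind^I_{\bar a}b_j$ for all $i<j$. From there you are stuck: the ``amalgamation step'' you describe (building a sequence with the same EM-type but $\ind^I$-independent pairs) is exactly the existence of a Kim--Morley sequence in $\tp(b_1/\bar a)$, which you have not established and which is not a triviality. The NSOP$_1$ fallback does not help either: knowing that $\ind^I=\ind^K$ in $\omega\PAC_0$ does not by itself make $(b_n)$ a Kim--Morley sequence, and the standard route from ``indiscernible tail of a long sequence'' to ``Kim-independent'' passes through precisely the argument below.

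The paper's fix is to substitute in the \emph{other} slot. Because $a_1,a_2,\ldots,b_1,b_2$ is indiscernible and the $a_i$ form an infinite initial segment, the type $\tp(b_1/b_2\bar a)$ is finitely satisfiable in $\bar a$: for any $\phi(x,b_2,a_{i_1},\ldots,a_{i_k})$ satisfied by $b_1$, pick $j>\max(i_1,\ldots,i_k)$ and note that $(a_{i_1},\ldots,a_{i_k},a_j,b_2)$ and $(a_{i_1},\ldots,a_{i_k},b_1,b_2)$ have the same order-type in the sequence, hence $M\models\phi(a_j,b_2,\bar a)$. Applying this to the formula from strong finite character gives some $a_j$ with $a_j\nind^I_{\bar a}b_2$. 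But $a_j\in\bar a$, so existence gives $b_2\ind^I_{\bar a}\bar a$, monotonicity gives $b_2\ind^I_{\bar a}a_j$, and symmetry gives $a_j\ind^I_{\bar a}b_2$, a contradiction. This uses only invariance, existence, monotonicity, symmetry, and strong finite character of $\ind^I$; there is no need to split into (i) and (ii) or to invoke any NSOP$_1$ machinery.
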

\begin{proof}
  The type $\tp(b_1/b_2\bar{a})$ is finitely satisfiable in $\bar{a}$,
  so this follows as in the proof of
  \cite[Proposition~5.8]{artem-nick}.  In more detail, supose for the
  sake of contradiction that $b_1 \nind^I_{\bar{a}} b_2$.  By strong
  finite character there is $\phi(x,b_2,\bar{a})$ in
  $\tp(b_1/b_2\bar{a})$ such that any $c$ realizing
  $\phi(x,b_2,\bar{a})$ satisfies $c \nind^I_{\bar{a}} b_2$.  But by
  indiscernibility, $M \models \phi(a_i,b_2,\bar{a})$ for sufficiently
  large $i$, and so $a_i \nind^I_{\bar{a}} b_2$.  But $b_2
  \ind^I_{\bar{a}} \bar{a}$ by existence, $b_2 \ind^I_{\bar{a}} a_i$
  by monotonicity, and $a_i \ind^I_{\bar{a}} b_2$ by symmetry, a
  contradiction.
\end{proof}
We will also need the following independence theorem for $\ind^I$, which is equivalent to part of \cite[Lemma~3.2]{zoe-omega-forking}
\begin{fact} \label{ind-theorem}
Let $M$ be a sufficiently saturated $\omega$-free PAC field of characteristic 0.  Suppose $A$ is a small relatively algebraically closed subfield of $M$, and $\alpha, \alpha'', \beta, \beta', \gamma', \gamma''$ are tuples such that
\begin{gather*}
    \alpha \equiv_A \alpha'', \qquad \beta \equiv_A \beta', \qquad \gamma' \equiv_A \gamma'' \\
    \alpha \ind^I_A \beta, \qquad \beta' \ind^I_A \gamma', \qquad \alpha'' \ind^I_A \gamma''.
\end{gather*}
Then there are tuples $\alpha^\dag, \beta^\dag, \gamma^\dag$ such that $\alpha^\dag \ind^I_A \beta^\dag \gamma^\dag$ and
\begin{equation*}
\alpha^\dag \beta^\dag \equiv_A \alpha \beta, \qquad \beta^\dag \gamma^\dag \equiv_A \beta' \gamma', \qquad \alpha^\dag \gamma^\dag \equiv_A \alpha'' \gamma'' .
\end{equation*}
\end{fact}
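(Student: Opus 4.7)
The plan is to prove the independence theorem for $\ind^I$ in $\omega\PAC_0$ in two layers: first amalgamate the underlying field extensions using the stable independence theorem in $\ACF$, then upgrade the amalgam to an $\omega\PAC_0$-configuration using the $\omega$-freeness of $\Gal(A)$. Throughout we rely on Fact~\ref{pac-facts-A}(2): over a relatively algebraically closed base $A$, the type of a tuple in $\omega\PAC_0$ is determined by the $A$-isomorphism class of its relative algebraic closure together with its embedding into $M^{\alg}$.

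First, normalize the configuration. Using automorphisms of $M$ over $A$, we may assume $\beta' = \beta$ and $\alpha'' = \alpha$, which replaces $\gamma'$ and $\gamma''$ by tuples $\gamma_1, \gamma_2$ with $\gamma_1 \equiv_A \gamma_2$, $\beta \ind^I_A \gamma_1$, and $\alpha \ind^I_A \gamma_2$. The reduced problem is to find $\gamma^\dag \equiv_A \gamma_1$ with $\beta \gamma^\dag \equiv_A \beta \gamma_1$, $\alpha \gamma^\dag \equiv_A \alpha \gamma_2$, and $\alpha \ind^I_A \beta \gamma^\dag$. At the field level, the stable independence theorem in $\ACF$ amalgamates the three pairwise ACF-types into a single type $\tp^{\ACF}(\alpha\beta\gamma^\dag/A)$ with $\alpha \ind^{\ACF}_A \beta\gamma^\dag$, yielding an abstract field $A(\alpha,\beta,\gamma^\dag)$ together with embeddings of the three pairwise relative algebraic closures predicted by the given pair-types.

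The harder task is to install a coherent $\Gal(A)$-action on the relative algebraic closure of $A(\alpha,\beta,\gamma^\dag)$ that realizes the three given $\omega\PAC_0$-pair-types simultaneously. Here the $\omega$-freeness of $\Gal(A)$, i.e.\@ that it is the free profinite group of countably infinite rank, is the crucial tool: any finite diagram of Galois covers of $A$ with compatible transition maps lifts to a single cover, so once compatibility is checked on overlaps, a global action exists. Compatibility on the pairwise overlaps reduces to the singleton closures $\acl_M(A\alpha)$, $\acl_M(A\beta)$, $\acl_M(A\gamma^\dag)$, and the pairwise $\ind^I$ hypotheses guarantee precisely that no unexpected algebraic elements appear in the pairwise compositums, which is what is needed for the reduction. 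Once the abstract amalgam is constructed as an $\omega\PAC_0$-structure, model completeness (Fact~\ref{pac-facts-A}(1)) together with saturation of $M$ realize $\alpha^\dag, \beta^\dag, \gamma^\dag$ inside $M$. The main obstacle is the Galois amalgamation step: one must simultaneously verify agreement of the three pieces of Galois data on pairwise overlaps and lift the resulting compatible system to $\Gal(A)$, and this is precisely where the hypothesis that $M$ is $\omega$-free PAC (rather than merely PAC) is essential.
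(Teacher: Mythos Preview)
The paper does not prove this statement at all: it is stated as a \emph{Fact} and attributed to Chatzidakis, specifically as ``equivalent to part of \cite[Lemma~3.2]{zoe-omega-forking}''. So there is no in-paper proof to compare your proposal against; you are supplying an argument where the authors simply quote the literature.

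Your two-layer strategy (amalgamate in $\ACF$, then upgrade the Galois data, then realize via model completeness) is indeed the shape of Chatzidakis's argument, and the reduction to $\beta'=\beta$, $\alpha''=\alpha$ is the standard normalization. However, there is a concrete error in the Galois step. You write that the crucial tool is ``the $\omega$-freeness of $\Gal(A)$, i.e.\ that it is the free profinite group of countably infinite rank''. But $A$ is only assumed to be a small relatively algebraically closed subfield of $M$; nothing forces $\Gal(A)$ to be $\omega$-free. For instance, $A$ could be $\Qq$ sitting relatively algebraically closed inside an $\omega$-free PAC field, and $\Gal(\Qq)$ is certainly not free profinite. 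The $\omega$-freeness hypothesis in the Fact is on $M$, not on $A$, and the embedding problem that must be solved lives at the level of $\Gal(M)$ (equivalently, one builds the abstract amalgam and then realizes it inside $M$ using that any characteristic-$0$ field has a regular $\omega$-free PAC extension together with model completeness). Your sketch conflates the base of the type with the ambient model, and as written the lifting step would not go through.

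In short: right architecture, wrong Galois group. If you rewrite the second paragraph so that the Iwasawa/embedding-problem argument is carried out for $\Gal(M)$ (or, equivalently, so that the amalgam is realized via a regular embedding into an $\omega$-free PAC extension and then pulled back by model completeness and saturation), the outline becomes a correct summary of Chatzidakis's proof.
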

Aside from $\ind^I$, we will need the stronger relation $\ind^{III}$
from \cite[(3.6)]{zoe-note}.  Let $M$ be a model of $\omega\PAC_0$.  If $C \subseteq A, B$ are relatively algebraically closed subsets of $F$, define $A \ind^{III}_C B$ to mean
\begin{equation*}
  A \ind^{\ACF}_C B \text{ and } M \cap (AB)^{\alg} = AB.
\end{equation*}
For general subsets $A, B, C$, define $A \ind^{III}_C B$ to mean
\begin{equation*}
  \acl(AC) \ind^{III}_{\acl(C)} \acl(BC).
\end{equation*}
\begin{fact} \label{iii-exist}
  Suppose $M$ is sufficiently saturated.  For any small sets $A, B,
  C$, there is $A' \equiv_C A$ such that $A' \ind^{III}_C B$.
  
  Moreover, if $C = \acl(C)$, then $\tp(A'/BC)$ is uniquely
  determined: if $A'' \equiv_C A$ and $A'' \ind^{III}_C B$, then $A''
  \equiv_{BC} A'$.
\end{fact}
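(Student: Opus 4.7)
The plan is to invoke the quantifier elimination of Fact~\ref{pac-facts-B} and reduce both existence and uniqueness to statements about quantifier-free $\mathcal{L}_{Sol}$-types over $BC$. First I would pass to algebraic closures, replacing $A$, $B$, $C$ by $\acl(A)$, $\acl(B)$, $\acl(C)$ (using Fact~\ref{pac-facts-A}(2) to identify this with field-theoretic relative algebraic closure in $M$); we may then assume $A = \acl(A)$, $B = \acl(B)$, $C = \acl(C)$ with $C \subseteq A \cap B$.

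For existence, I would construct the target qf-$\mathcal{L}_{Sol}$-type abstractly and then realize it in $M$ by saturation. Pick an abstract isomorphic copy $\sigma : A \to A^{*}$ of $A$ over $C$, and embed $A^{*}$ in some algebraically closed field also containing $B$ so that $A^{*}$ is algebraically independent from $B$ over $C$; this is possible because in $\ACF$ one can freely amalgamate $A$ and $B$ over $C$. Let $E := A^{*}\cdot B$, a field of characteristic $0$. Fact~\ref{pac-facts-A}(1) then produces a regular extension $M^{*}/E$ with $M^{*}\models \omega\PAC_0$, and the regularity of $M^{*}/E$ forces $E$ to be relatively algebraically closed in $M^{*}$. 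The qf-$\mathcal{L}_{Sol}$-type $p(\bar x) := \qftp_{\mathcal{L}_{Sol}}(A^{*}/BC)$ computed in $M^{*}$ encodes both the $C$-isomorphism $A^{*}\cong_{C} A$ at the level of fields and the fact that the compositum $A^{*}B$ is relatively algebraically closed in the ambient $\omega\PAC_0$-model. By QE and saturation of $M$, the type $p$ extends to a complete $\mathcal{L}_{Sol}$-type over $BC$ realized by some tuple $A'\subseteq M$, and any such $A'$ will satisfy $A' \equiv_{C} A$ together with $A' \ind^{III}_{C} B$.

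For uniqueness, suppose $A'$ and $A''$ both satisfy $A' \equiv_{C} A \equiv_{C} A''$ with $A' \ind^{III}_{C} B$ and $A'' \ind^{III}_{C} B$. I would show they share the same qf-$\mathcal{L}_{Sol}$-type over $BC$. On the field side, since $A'$ and $A''$ are each algebraically independent from $B$ over $C$ and each $C$-isomorphic to $A$, standard uniqueness of free compositums in $\ACF$ gives a field isomorphism $A'B \to A''B$ over $BC$ sending $A'$ to $A''$. On the $Sol_n$ side, since $A'B$ and $A''B$ are both relatively algebraically closed in $M$, a polynomial over either compositum has a root in $M$ if and only if it already has a root in that compositum, so the field isomorphism automatically preserves every $Sol_n$-predicate. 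Applying Fact~\ref{pac-facts-B} then yields $A' \equiv_{BC} A''$. The main obstacle I anticipate is the consistency check in the existence step: to realize the abstractly-defined type $p$ inside the specific saturated model $M$ rather than $M^{*}$, one uses that the qf-$\mathcal{L}_{Sol}$-type of $BC$ is determined by its intrinsic field structure (since $B$ and $C$ are relatively algebraically closed in both $M$ and $M^{*}$), so $p$ is compatible with $\qftp_{\mathcal{L}_{Sol}}(BC)$ as read off from $M$, and QE transports realization from $M^{*}$ to $M$ via saturation.
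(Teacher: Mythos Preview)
Your proposal is correct. The paper does not actually give a proof of Fact~\ref{iii-exist}; it only cites \cite[(3.6)]{zoe-note} and remarks that the statement is ``straightforward to prove.'' Your argument is precisely the natural way to make this precise using the tools already assembled in the paper: quantifier elimination in $\mathcal{L}_{Sol}$ (Fact~\ref{pac-facts-B}), the identification of $\acl$ with relative algebraic closure (Fact~\ref{pac-facts-A}(2)), and the existence of regular $\omega\PAC_0$-extensions (Fact~\ref{pac-facts-A}(1)).

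A couple of small points worth making explicit when you write this up. For existence, you should note that $B$ and $A^*$ are each relatively algebraically closed in $E = A^*B$ (because $A^*/C$ and $B/C$ are regular and $A^* \ind^{\ACF}_C B$), hence in $M^*$; this is what guarantees that the $\mathcal{L}_{Sol}$-structure on $B$ computed in $M^*$ agrees with the one computed in $M$, so that $M \equiv_B M^*$ by QE and your transport-by-saturation step goes through. For uniqueness, you should record that the field isomorphism $A'B \to A''B$ you build is the one induced by the $C$-isomorphism $A' \to A''$ coming from $A' \equiv_C A''$, so that it really sends the tuple $A'$ to the tuple $A''$ and not merely the underlying sets. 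With those clarifications, the argument is complete.
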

Fact~\ref{iii-exist} is mentioned in \cite[(3.6)]{zoe-note} and is straightforward to prove.

\begin{theorem} \label{nsop4}
  $C\XF$ has NSOP$_4$.
\end{theorem}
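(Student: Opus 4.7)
The strategy is to derive a contradiction from $\mathrm{SOP}_4$ by embedding a monster $\mathfrak M \models C\XF$ regularly into a monster $\hat{\mathfrak M} \models \omega\PAC_0$, applying the Independence Theorem for $\ind^I$ in $\hat{\mathfrak M}$ to build a fourth point for a cyclic configuration of $\phi$, and transferring this configuration back to a $C\XF$-model.

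\textbf{Setup.} Suppose for contradiction that $\phi(\bar x, \bar y)$ and an $A$-indiscernible sequence $(a_i)_{i<\omega}$ in $\mathfrak M$ witness $\mathrm{SOP}_4$, so $\models\phi(a_i, a_j)$ for $i < j$ while $\{\phi(x_1,x_2), \phi(x_2,x_3), \phi(x_3,x_4), \phi(x_4,x_1)\}$ is inconsistent with $C\XF$. By Corollary~\ref{cor:near-qe} we may assume $\phi$ is quantifier-free in $\mathcal L_{Sol}$. After replacing $A$ by its relative algebraic closure in $\mathfrak M$ and stretching, assume $A = \acl(A)$ and the sequence has order type $\Zz$, so $a_{-1}$ and $a_3$ exist with $\phi(a_{-1},a_0)$, $\phi(a_2,a_3)$, etc. Regularly embed $\mathfrak M$ into $\hat{\mathfrak M} \models \omega\PAC_0$ (Fact~\ref{pac-facts-A}(1)); since $\mathfrak M$ is relatively algebraically closed in $\hat{\mathfrak M}$, this is an $\mathcal L_{Sol}$-embedding, and Fact~\ref{pac-facts-B} ensures that $(a_i)_{i\in\Zz}$ remains $A$-indiscernible in $\hat{\mathfrak M}$.

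\textbf{Amalgamation via the Independence Theorem.} Applying Lemma~\ref{indiscernible-independence} together with symmetry and monotonicity of $\ind^I$ (absorbing a finite initial segment of the sequence into $A$ if necessary), we extract in $\hat{\mathfrak M}$ the independencies $a_{-1} \ind^I_A a_0$, $a_0 \ind^I_A a_2$, and $a_3 \ind^I_A a_2$, while indiscernibility gives $a_{-1} \equiv_A a_3$. Fact~\ref{ind-theorem} applied with $\alpha=a_{-1}$, $\beta=\beta'=a_0$, $\gamma'=\gamma''=a_2$, $\alpha''=a_3$ produces, after composing with an $A$-automorphism of $\hat{\mathfrak M}$ sending $(\beta^\dag,\gamma^\dag)$ to $(a_0,a_2)$, an element $\alpha^\dag \in \hat{\mathfrak M}$ with $\alpha^\dag \ind^I_A a_0 a_2$, $\alpha^\dag a_0 \equiv_A a_{-1}a_0$, and $\alpha^\dag a_2 \equiv_A a_3 a_2$. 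Quantifier-freeness of $\phi$ then yields $\phi(\alpha^\dag, a_0)$ and $\phi(a_2, \alpha^\dag)$, so $(a_0, a_1, a_2, \alpha^\dag)$ realizes the 4-cycle in $\hat{\mathfrak M}$.

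\textbf{Transfer back to $C\XF$ (main obstacle).} The element $\alpha^\dag$ lives only in $\hat{\mathfrak M}$, a PAC field with many $C$-rational points, so we do not yet have a $C\XF$-model realizing the cycle. Let $F = \mathfrak M(\alpha^\dag) \subseteq \hat{\mathfrak M}$; this is automatically a regular extension of $\mathfrak M$, since $\mathfrak M$ is relatively algebraically closed in $\hat{\mathfrak M}$. If we can show $C(F)=\varnothing$, then $F \models C\XF_\forall$, Theorem~\ref{extender} produces $K \supseteq F$ with $K \models C\XF$, and model completeness yields $\mathfrak M \preceq K$, so the 4-cycle is realized in $K$, contradicting $\mathrm{SOP}_4$. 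By Fact~\ref{rational-ff}, to show $C(F) = \varnothing$ it suffices to prove that the Zariski locus $V$ of $\alpha^\dag$ over $\mathfrak M$ admits no dominant rational $\mathfrak M$-map to $C$. The plan is to interpret $\alpha^\dag \ind^I_A a_0 a_2$ geometrically, identifying $V$ up to birational equivalence with a free compositum of the $\mathfrak M$-varieties $V_1, V_2$ carrying the types of $a_{-1}$ and $a_3$; each $V_i$ avoids $C$ because $a_{-1}, a_3 \in \mathfrak M \models C\XF$, and Lemma~\ref{lem:factor} together with Lemma~\ref{lem:regular-tensor} then imply $V$ has no dominant rational $\mathfrak M$-map to $C$. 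Making this geometric dictionary rigorous --- in particular, extracting a free-compositum structure of $V$ from the $\ind^I$-independence of $\alpha^\dag$ --- is the principal technical burden of the proof.
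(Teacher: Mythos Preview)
Your amalgamation step in $\hat{\mathfrak M}$ is reasonable in spirit, but the transfer-back step contains a genuine gap that I do not see how to repair along the lines you suggest.

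The field $F = \mathfrak M(\alpha^\dag)$ has no reason to avoid $C$.  You only control $\tp(\alpha^\dag/Aa_0a_2)$; the independence theorem gives you nothing about $\tp(\alpha^\dag/\mathfrak M)$.  Your proposed fix---``identifying $V$ up to birational equivalence with a free compositum of the $\mathfrak M$-varieties $V_1,V_2$ carrying the types of $a_{-1}$ and $a_3$''---is incoherent: since $a_{-1},a_3\in\mathfrak M$, their loci over $\mathfrak M$ are points, so there is no nontrivial compositum there.  If instead you mean loci over $A$, you are no longer describing $V$, which is the locus of $\alpha^\dag$ over $\mathfrak M$.

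The paper does \emph{not} attempt to keep all of $\mathfrak M$ in the target field.  Instead it builds a small field
\[
L=\acl(Ac_1c_2)\cdot\acl(Ac_2c_3)\cdot\acl(Ac_3c_4)\cdot\acl(Ac_4c_1)
\]
and shows $L$ avoids $C$ by two applications of Lemma~\ref{lem:regular-tensor}.  For this it needs each $\acl(Ac_ic_{i+1})$ to embed into $M$ (hence avoid $C$), and it needs the two ``halves'' $\acl(Ac_1c_2)\acl(Ac_2c_3)$ and $\acl(Ac_3c_4)\acl(Ac_4c_1)$ to be regular extensions of their intersection $\acl(Ac_1)\acl(Ac_3)$.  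The latter regularity is exactly why the paper uses the stronger relation $\ind^{III}$ (not just $\ind^I$): it arranges $c_1\ind^{III}_A c_3$, so that $\acl(Ac_1)\acl(Ac_3)$ is relatively algebraically closed in $\hat{\mathfrak M}$, and separately arranges $c_4\ind^{III}_{Ac_1c_3}c_2$.  Moreover, the stationarity of $\ind^{III}$ is what gives the symmetry $c_1c_3\equiv_A c_3c_1$, which is how $c_4$ is produced as the image of $c_2$ under an automorphism swapping $c_1$ and $c_3$.  Your setup, which reuses three points $a_0,a_1,a_2$ from the original sequence and only secures $\ind^I$-independence, lacks both of these ingredients.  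Even if you replace $F$ by the smaller compositum $\acl(Aa_0a_1)\acl(Aa_1a_2)\acl(Aa_2\alpha^\dag)\acl(A\alpha^\dag a_0)$, you cannot apply Lemma~\ref{lem:regular-tensor} without knowing that $\acl(Aa_0)\acl(Aa_2)$ is relatively algebraically closed, and $a_0\ind^I_A a_2$ does not give this.
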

\begin{proof}
  Throughout the
  proof, work in $\mathcal{L}_{Sol}$, the language with the $Sol_n$
  solvability predicates.  In particular, $\qftp$ will mean
  quantifier-free type in $\mathcal{L}_{Sol}$.  As noted above, both
  $C\XF$ and $\omega\PAC_0$ have quantifier elimination in this
  language.

  Suppose for the sake of contradiction that there is an instance of
  SOP$_4$ in a highly saturated model $M \models C\XF$.  Specifically,
  suppose there is a sequence $a_1, a_2, a_3, \ldots$ and a formula
  $\phi(x,y)$ such that $M \models \phi(a_i,a_j)$ for $i < j$, and the
  partial type
  \begin{equation*}
    \{\phi(x_1,x_2), \phi(x_2,x_3), \phi(x_3,x_4), \phi(x_4,x_1)\}
  \end{equation*}
  is inconsistent.  Without loss of generality, $a_1, a_2, \ldots$ is
  indiscernbile.  Extend to a longer indiscernible sequence $a_1, a_2,
  \ldots, b_1, b_2, \ldots$.
  \begin{claim} \label{endgame}
    It suffices to find $L \models C\XF_\forall$ containing $c_1,c_2,c_3,c_4$
    such that
    \begin{equation*}
      \qftp^L(c_ic_{i+1}) = \qftp^M(b_1b_2) \text{ for } i \in \Zz/4\Zz.
    \end{equation*}
  \end{claim}
  \begin{claimproof}
    By Theorem~\ref{extender}, there is a regular extension $L'/L$
    with $L \models C\XF$.  The embedding $L \mapsto L'$ respects the
    $Sol_n$ predicates, so it preserves quantifier-free types.
    Replacing $L$ with $L'$, we may therefore assume $L \models C\XF$.
    Then $\tp^L(c_ic_{i+1}) = \tp^M(b_1b_2) = \tp^M(a_1a_2)$ by
    quantifier elimination in $C\XF$.  It follows that $L \models
    \phi(c_i,c_{i+1})$, so the partial type $\{\phi(x_1,x_2),
    \phi(x_2,x_3), \phi(x_3,x_4), \phi(x_4,x_1)\}$ is realized in $L$.  The complete type of a tuple contains the complete theory of the underlying structure, so $\tp^L(c_1c_2) = \tp^M(b_1b_2)$ implies $L \equiv M$.  Then
    the partial type $\{\phi(x_1,x_2),
    \phi(x_2,x_3), \phi(x_3,x_4), \phi(x_4,x_1)\}$ is also realized in $M$, a contradiction.
  \end{claimproof}
  Embed $M$ into a highly saturated regular extension $\hat{M} \models
  \omega\PAC_0$.  We work in $\hat{M}$ until stated otherwise.  The sequence
  $a_1, a_2, \ldots, b_1, b_2, \ldots$ remains indiscernible in $\hat{M}$
  because of quantifier elimination in $\omega\PAC_0$ in $\mathcal{L}_{Sol}$.  Let $A_0 =
  a_1a_2a_3\ldots$ and $A = \acl(A_0)$.  By
  Lemma~\ref{indiscernible-independence}, $b_1 \ind^I_{A_0} b_2$,
  or equivalently $b_1 \ind^I_A b_2$.  The sequence $b_1, b_2, \ldots
  $ is $A_0$-indiscernible, which implies it is $A$-indiscernible.

  Let $p = \tp(b_1/A) = \tp(b_2/A)$.  By Fact~\ref{iii-exist}, there
  exist $e_1, e_2$ such that $\tp(e_1/A) = \tp(e_2/A) = p$ and $e_1
  \ind^{III}_A e_2$.  By the symmetry and stationarity of $\ind^{III}$, it follows that $e_1e_2 \equiv_A e_2e_1$.

  By the independence theorem (Fact~\ref{ind-theorem}, applied to $\alpha = b_1$, $\beta = b_2$, $\beta' = b_1$, $\gamma' = b_2$, $\alpha'' = e_1$ and $\gamma'' = e_2$), we can find
  $c_1, c_2, c_3$ such that
  \begin{gather*}
    c_1c_2 \equiv_A b_1b_2 \text{ and so } c_1 \ind^I_A c_2 \\
    c_2c_3 \equiv_A b_1b_2 \text{ and so } c_2 \ind^I_A c_3 \\
    c_1c_3 \equiv_A e_1e_2 \text{ and so } c_1 \ind^{III}_A c_3 \\
    c_1 \ind^I_A c_2c_3.
  \end{gather*}
  Note that the second and fourth lines ensure that $c_1$, $c_2$, and $c_3$ are
  ACF-independent over $A$.
  \begin{claim} \label{step-1}
    The field $\acl(Ac_1c_2)\acl(Ac_2c_3)$ satifies $C\XF_\forall$ (it
    avoids $C$).
  \end{claim}
  \begin{claimproof}
    From the ACF-independence of $c_1, c_2, c_3$ over $A$, it follows that
    \begin{equation*}
      \acl(Ac_1c_2) \ind^{\ACF}_{\acl(Ac_2)} \acl(Ac_2c_3)
    \end{equation*}
    By Lemma~\ref{lem:regular-tensor}, it suffices to show that
    $\acl(Ac_1c_2) \models C\XF_\forall$ and $\acl(Ac_2c_3) \models C\XF_\forall$.  As $c_1c_2
    \equiv_A c_2c_3 \equiv_A b_1b_2$, it suffices to show that
    $\acl(Ab_1b_2) \models C\XF_\forall$. But
    \begin{equation*}
      \acl(Ab_1b_2) = \acl_{\hat{M}}(Ab_1b_2) = \acl_{\hat{M}}(A_0b_1b_2) =
      \acl_M(A_0b_1b_2) \subseteq M,
    \end{equation*}
    because $\hat{M} \models \omega\PAC_0$ was a regular extension of $M
    \models C\XF$.  Then $M \models C\XF_\forall \implies \acl(Ab_1b_2) \models C\XF_\forall$.
  \end{claimproof}
  Now $c_1c_3 \equiv_A e_1e_2$ and $e_1e_2 \equiv_A e_2e_1$.  It
  follows that $c_1c_3 \equiv_A c_3c_1$.  Take $\sigma \in \Aut(\hat{M}/A)$
  exchanging $c_1$ and $c_3$.  Letting $c_4 = \sigma(c_2)$, we get
  \begin{gather*}
    c_3c_4 = \sigma(c_1c_2) \equiv_A c_1c_2 \equiv_A b_1b_2 \\
    c_4c_1 = \sigma(c_2c_3) \equiv_A c_2c_3 \equiv_A b_1b_2 \\
    \acl(Ac_3c_4)\acl(Ac_4c_1) \models C\XF_\forall.
  \end{gather*}
  All of these properties are statements about $\tp(c_4/Ac_1c_3)$, so
  moving $c_4$ by an automorphism over $Ac_1c_3$, we may further
  assume that
  \begin{gather*}
    c_4 \ind^{III}_{Ac_1c_3} c_2 \tag{$\ast$}
  \end{gather*}
  by Fact~\ref{iii-exist}.
  \begin{claim}
    \label{final-claim}
    The field $L := \acl(Ac_1c_2)\acl(Ac_2c_3)\acl(Ac_3c_4)\acl(Ac_4c_1)$
    satisfies $C\XF_\forall$ (it avoids the curve $C$).
  \end{claim}
  \begin{claimproof}
    Note that ($\ast$) implies $c_4 \ind^{\ACF}_{Ac_1c_3} c_2$ and then
    \begin{equation*}
      \acl(Ac_3c_4)\acl(Ac_4c_1) \ind^{\ACF}_{\acl(Ac_1)\acl(Ac_3)}
      \acl(Ac_1c_2)\acl(Ac_2c_3).
    \end{equation*}
    The fact that $c_1 \ind^{III}_A c_3$ implies by definition that $\acl(Ac_1)\acl(Ac_3)$ is relatively algebraically closed in $\hat{M}$, and so
    \begin{equation*}
        \acl(Ac_3c_4)\acl(Ac_4c_1) \text{ is a regular extension of } \acl(Ac_1)\acl(Ac_3).
    \end{equation*}
    The same holds for $\acl(Ac_1c_2)\acl(Ac_3c_3)$.
    Then Lemma~\ref{lem:regular-tensor} applies, and it suffices to show that the
    two fields
    \begin{gather*}
      \acl(Ac_3c_4)\acl(Ac_4c_1) \\
      \acl(Ac_1c_2)\acl(Ac_2c_3)
    \end{gather*}
    avoid $C$, which was arranged above via Claim~\ref{step-1} and
    the choice of $c_4$.
  \end{claimproof}
  With $L$ as in Claim~\ref{final-claim},
  \begin{equation*}
    L \cap \langle Ac_ic_{i+1} \rangle^{\alg} = \acl_{\hat{M}}(Ac_ic_{i+1})
  \end{equation*}
  for $i \in \Zz/4\Zz$, because $L$ contains
  $\acl_{\hat{M}}(Ac_ic_{i+1})$.  Therefore,
  \begin{equation*}
    \qftp^L(Ac_ic_{i+1}) = \qftp^{\hat{M}}(Ac_ic_{i+1}) =
    \qftp^{\hat{M}}(Ab_1b_2) = \qftp^M(Ab_1b_2).
  \end{equation*}
  By Claim~\ref{endgame}, we are done.
\end{proof}

\section{An instance of SOP$_3$}
Recall the definition of SOP$_3$ from Section~\ref{nsop4-section}.
\begin{theorem} \label{thm:some-sop3}
  Suppose $K_0 = \Qq$ and $C$ is the curve $x^4 + y^4 + 1 = 0$.  Let
  $K$ be a model of $C\XF$ such that $\Abs(K) = \Qq^{\alg} \cap \Rr$.  Then $K$ has
  SOP$_3$.
\end{theorem}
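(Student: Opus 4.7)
The plan is to exhibit a single formula tailored to the specific curve $C : x^4+y^4+1=0$. Set
\[
\phi(x,y) \;:\Longleftrightarrow\; \exists z\,(z \neq 0 \,\wedge\, z^4 = y-x),
\]
so that $\phi(x,y)$ asserts that $y-x$ is a nonzero fourth power in $K$. It then remains to verify the two defining clauses of $\mathrm{SOP}_3$: an infinite $\phi$-chain in $K$, and inconsistency of the $3$-cycle.

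For the infinite $\phi$-chain, I would take $a_i = i$ for $i \in \Nn$, viewed as elements of $K$. For any $i<j$ one has $a_j - a_i = j-i$, a positive integer. Its positive real fourth root $(j-i)^{1/4}$ is algebraic over $\Qq$ and real, hence lies in $\Qq^{\alg}\cap\Rr = \Abs(K) \subseteq K$, and it is nonzero. Thus $\phi(a_i,a_j)$ holds for every $i<j$.

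For inconsistency of the length-$3$ cycle, suppose toward contradiction that $x_1,x_2,x_3 \in K$ and nonzero $u,v,w \in K$ witness $\phi(x_1,x_2)\wedge\phi(x_2,x_3)\wedge\phi(x_3,x_1)$, i.e.\
\[
u^4 = x_2-x_1,\qquad v^4 = x_3-x_2,\qquad w^4 = x_1-x_3.
\]
Summing gives $u^4+v^4+w^4 = 0$, and dividing by $w^4 \neq 0$ yields
\[
(u/w)^4 + (v/w)^4 + 1 = 0,
\]
so $(u/w,\,v/w) \in C(K)$, contradicting axiom $(\mathrm{III}_C)$, i.e.\ $C(K) = \varnothing$.

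There is essentially no obstacle here: the curve $C$ is chosen precisely so that a $3$-cycle of $\phi$ is the defining equation of a $K$-point on $C$, while the hypothesis $\Abs(K) = \Qq^{\alg}\cap\Rr$ supplies the real fourth roots of positive integers needed to produce the $\phi$-chain inside $K$ itself (not just in some elementary extension). The entire argument reduces to matching the formula to the curve and exploiting the explicit description of $\Abs(K)$.
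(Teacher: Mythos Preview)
Your proof is correct and essentially identical to the paper's own argument: the same formula $\phi(x,y)$ asserting that $y-x$ is a nonzero fourth power, the same chain $a_i=i$ using real fourth roots in $\Abs(K)=\Qq^{\alg}\cap\Rr$, and the same reduction of a $3$-cycle to a $K$-point on $C$.
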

\begin{proof}
  Let $\phi(x,y)$ be the formula saying that $y-x$ is a non-zero
  fourth power.  We claim that the formula $\phi$ witnesses SOP$_3$ in
  $K$.  First of all, there is a sequence $\{a_i\}_{i < \omega}$ such
  that $K \models \phi(a_i,a_j)$ for all $i < j$, namely $a_i = i$.
  Indeed, if $i < j$, then $\sqrt[4]{j-i} \in \Qq^{\alg} \cap \Rr
  \subseteq K$, so $K \models \phi(i,j)$.  On the other hand,
  \begin{equation*}
    \{\phi(x_1,x_2),\phi(x_2,x_3),\phi(x_3,x_1)\}
  \end{equation*}
  is inconsistent, or else
  \begin{gather*}
    x_2 - x_1 = t_1^4 \\
    x_3 - x_2 = t_2^4 \\
    x_1 - x_3 = t_3^4
  \end{gather*}
  for some non-zero $t_1,t_2,t_3$.  Then
  \begin{equation*}
    t_1^4 + t_2^4 + t_3^4 = (x_2 - x_1) + (x_3 - x_2) + (x_1 - x_3) = 0,
  \end{equation*}
  and $(t_1/t_3,t_2/t_3)$ is a point on $C$.
\end{proof}
Recall that a structure or theory is \emph{strictly NSOP$_4$} if it is
NSOP$_4$ but not NSOP$_3$.  For example, the Henson graph is strictly
NSOP$_4$.
\begin{corollary} \label{strict-nsop4}
  There is a field $(K,+,\cdot)$ that is strictly NSOP$_4$ in the
  language of fields.
\end{corollary}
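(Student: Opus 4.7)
The plan is simply to exhibit a field witnessing the corollary by combining the two main results of the paper, using the construction from the immediately preceding theorem. Take $K_0 = \Qq$ and let $C$ be the affine curve $x^4 + y^4 + 1 = 0$. The genus of the smooth projective completion is $3 \geq 2$, and $C(\Qq) = \varnothing$ since $x^4 + y^4 + 1 \geq 1$ for any real $x,y$; in particular $C$ satisfies the standing hypotheses, so $C\XF$ exists.

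Next, I would note that $\Qq^{\alg} \cap \Rr$ is an algebraic extension of $\Qq$ that still avoids $C$, by the same real-positivity argument: for totally real algebraic numbers $x,y$, we still have $x^4 + y^4 + 1 > 0$, so $C(\Qq^{\alg} \cap \Rr) = \varnothing$. By Theorem~\ref{completions-2}, there exists a model $K \models C\XF$ such that $\Abs(K) = \Qq^{\alg} \cap \Rr$.

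Now I apply both main structural results. The theorem of Section~\ref{nsop4-section} shows that every model of $C\XF$ is $\mathrm{NSOP}_4$, and therefore so is $K$. On the other hand, the preceding theorem in this section shows that exactly this $K$ (with $K_0 = \Qq$, the Fermat-type curve $x^4+y^4+1=0$, and $\Abs(K) = \Qq^{\alg} \cap \Rr$) has $\mathrm{SOP}_3$, witnessed by the formula $\phi(x,y) \equiv \exists z\,(z \ne 0 \wedge z^4 = y - x)$. Thus $K$, viewed in the pure language of rings, is $\mathrm{NSOP}_4$ but not $\mathrm{NSOP}_3$, i.e.\ strictly $\mathrm{NSOP}_4$.

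There is essentially no obstacle here: every nontrivial input has already been supplied by earlier results, and what remains is bookkeeping. The only thing to verify carefully is that the language is the pure language of fields, which is automatic since $K_0 = \Qq$ so $\cL(K_0) = \clring$, and both the $\mathrm{SOP}_3$ and $\mathrm{NSOP}_4$ conclusions were obtained using $\clring$-formulas.
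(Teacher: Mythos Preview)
Your proposal is correct and is exactly the argument the paper intends: the corollary is stated without proof because it follows immediately by combining the NSOP$_4$ theorem with the preceding SOP$_3$ theorem for the curve $x^4+y^4+1=0$ and $\Abs(K)=\Qq^{\alg}\cap\Rr$, together with the observation that $K_0=\Qq$ means we are in the pure ring language. One small terminological slip: the elements of $\Qq^{\alg}\cap\Rr$ are the \emph{real} algebraic numbers, not the \emph{totally real} ones; your positivity argument only needs that they are real, so this does not affect correctness.
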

We have learned through private communication that Ramsey recently has found constructions of strictly NSOP$_4$ PAC fields via coding graphs in the absolute Galois group. We believe that no other examples of strictly NSOP$_4$ fields are known.
\begin{conjecture}
  Any model of $C\XF$, for any $C$ and $K_0$, is strictly NSOP$_4$.
  Equivalently, any model of $C\XF$ has SOP$_3$.
\end{conjecture}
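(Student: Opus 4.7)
The plan is to generalize the specific proof of the preceding theorem, which exhibited an instance of SOP$_3$ for one particular completion of one particular $C\XF$. Two features of that construction were essential: (i) a 3-cycle in $\phi(x,y) \equiv ``y-x \in K^{\times 4}"$ literally yields a $K$-point on $C: x^4+y^4+1=0$ via the symmetric identity $t_1^4+t_2^4+t_3^4=0$, contradicting $C(K)=\varnothing$; and (ii) the choice $\Abs(K)=\Qq^{\alg}\cap\Rr$ supplied an explicit chain $a_i=i$, because positive integers have fourth roots in $\Qq^{\alg}\cap\Rr$. For the conjecture, I would pursue both features simultaneously for arbitrary $C$ and $K_0$.

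\emph{Step 1 (triangle-free).} For any genus-$\ge 2$ curve $C$ over $K_0$, construct a formula $\phi_C(x,y)$ over $K_0$ such that every triple $(x_1,x_2,x_3) \in K^3$ satisfying the 3-cycle produces a $K$-point on $C$. One avenue is purely algebraic: embed $C$ in some $\Pp^N$ via a suitable linear system, read off a defining equation $F(t_1,t_2,t_3)=0$ that is symmetric in its three variables, and set $\phi_C(x,y) \equiv \exists t \,(y-x = g(t) \wedge R(t))$ where the combinatorics of $g(t_1)+g(t_2)+g(t_3)$ matches the shape of $F$. In cases without natural $S_3$-symmetry one can try passing to a cover $C' \to C$ with $C'$ carrying more symmetry, provided the passage does not alter the model companion.

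\emph{Step 2 (chain).} For the same $\phi_C$, arrange that the pair variety $V_2 = \{(x,y) : \phi_C(x,y)\}$ admits no dominant rational map to $C$ over $K_0$. Then axiom $(\mathrm{II}_C)$ forces $V_2(K)$ to be Zariski dense in $V_2$. Imitating the proof of Theorem~\ref{Hilb}, one can iterate the generic-point construction---pass to the rational function field $K(t_1,\ldots,t_n)$, then to a regular extension $K^* \models C\XF$ with $K^* \succeq K$---to produce, in an elementary extension, an $\omega$-indexed chain $\{a_i\}$ with $\phi_C(a_i,a_j)$ for all $i<j$. Since SOP$_3$ is a property of the theory, this would suffice.

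The main obstacle is Step 1. The Fermat case works because of the concrete $S_3$-symmetric equation $t_1^4+t_2^4+t_3^4=0$ naturally associated to $C$; generic curves of genus $\ge 2$ have finite (and typically small) automorphism groups and no such symmetric presentation. An alternative Jacobian-based approach---defining $\phi_C(x,y)$ through the embedding $\tilde C \hookrightarrow J = \jac(\tilde C)$ after fixing a basepoint, so that a 3-cycle forces $P_1+P_2+P_3=0$ in $J$ for some $P_i \in \tilde C(K)$---faces two problems: expressing the group law on $J$ and the embedding $\tilde C \hookrightarrow J$ in the ring language of $K$, and ensuring via Faltings' theorem on subvarieties of abelian varieties that the resulting 3-cycle locus is not generically satisfied (so that the chain from Step~2 is not obstructed). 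A uniform construction handling every genus-$\ge 2$ curve appears to require new geometric input, which is consistent with the authors leaving the statement as a conjecture.
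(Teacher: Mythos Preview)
The statement you are addressing is a \emph{conjecture} in the paper, not a theorem; the paper gives no proof and explicitly leaves it open. There is therefore nothing to compare your proposal against. You yourself recognize this in your final paragraph, so what you have written is not a proof attempt but a research outline with acknowledged gaps.

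That said, a few remarks on the outline itself. Your Step~1 is indeed the crux, and the obstacles you identify are real: a generic genus-$\ge 2$ curve has no $S_3$-symmetric presentation, and the Jacobian approach requires a $K$-rational basepoint on $\tilde C$ to even define the embedding $\tilde C \hookrightarrow J$, which may not exist (and if $\tilde C(K)\ne\varnothing$ the model is large by Theorem~\ref{large-characterize}, changing the flavour of the problem). Your Step~2 is also more delicate than you suggest: to get an infinite $\phi_C$-chain you need not just that the pair variety $V_2$ has many $K$-points, but that the \emph{iterated} chain variety $V_n = \{(x_0,\ldots,x_n) : \phi_C(x_i,x_j) \text{ for all } i<j\}$ admits no dominant rational map to $C$ for every $n$, and this must be checked simultaneously with the triangle-freeness from Step~1. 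In the Fermat example both steps were handled by the accident that $\Abs(K)$ was ordered and the chain $a_i=i$ lived in $\Qq$; for arbitrary $K_0$ and $\Abs(K)$ no such shortcut is available.

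In short: your proposal is a reasonable sketch of where the difficulty lies, but it is not a proof, and the paper does not claim one either.
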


\section{Potential generalizations of $C\XF$}
The construction of $C\XF$ should have a number of generalizations and
variants.
\begin{enumerate}
\item If $V$ is a variety without rational points, we can look at the
  class of fields $K$ such that $V(K) = \varnothing$.  What conditions
  on $V$ ensure that the model companion exists?  The construction in
  Sections~\ref{sec:axiom}--\ref{sec:inter} mainly relies on the fact that rational maps
  into $V$ have bounded complexity, and this probably holds for some
  higher-dimensional varieties.  For example, can we let $V$ be a
  surface of general type?
\item Let $C_1, \ldots, C_n$ be a finite list of smooth irreducible
  curves, each with genus $\ge 2$.  The class of fields $K$ such that
  $C_1(K) = C_2(K) = \cdots = C_n(K) = \varnothing$ probably has a
  model companion.  This is an instance of the previous point, with $V
  = C_1 \cup C_2 \cup \cdots \cup C_n$.
\item Let $C_1, C_2, \ldots$ be an \emph{infinite} list of smooth
  irreducible curves, each with genus $g_{C_i} \ge 2$.  Consider the
  class of fields $K$ such that every $C_i(K)$ is empty.  If $\lim_{i
    \to \infty} g_{C_i} = \infty$, then a model companion should
  exist.  The key observation is that the set
  \begin{equation*}
    \{i < \omega : \text{there is a dominant rational map } V
    \dashrightarrow C_i\}
  \end{equation*}
  is finite and bounded by the complexity of $V$.  Axiom (I$_C$) seems
  problematic\footnote{One would need to say something like, ``If
  $L/K$ is a finite extension, then $C_i(L) \ne \varnothing$ for
  \emph{some} $i < \omega$.'' This is not a first-order property.}, but
  can be omitted for reasons related to Corollary~\ref{drop-axiom}(2).  When combined with the method of Example~\ref{eg:alg-bdd}, this should give examples of algebraically bounded fields in which $\dcl(\varnothing)$ has infinite transcendence degree.
\item In a different direction, if $C$ is a curve over $\Qq$ of genus
  $\ge 2$ without rational points, then the theory of \emph{ordered}
  fields avoiding $C$ should have a model companion.  The
  axiomatization would be similar to the axiomatization of $C\XF$, but
  changing axiom (II$_C$) to only apply to varieties $V$ containing a
  smooth point in the real closure of $K$.\footnote{If $V$ is an
  irreducible variety over an ordered field $K$, then $V$ has a smooth
  point in the real closure if and only if there is an order on $K(V)$
  extending the order on $K$.}  Similarly, there may be variants of
  this construction involving valued fields or formally $p$-adic
  fields.
\item In another direction, one could attempt to generalize the
  construction of $C\XF$ from algebraically closed fields to other
  strongly minimal or $\omega$-stable theories.  For example, let $T$
  be an $\omega$-stable theory with quantifier elimination, and let
  $\phi(x)$ be a strongly minimal formula.  Consider the class of
  structures
  \begin{equation*}
    \{K : M \models T, ~ K \subseteq M, ~ K = \dcl(K), ~ \phi(K) =
    \varnothing\}.
  \end{equation*}
  This class is elementary and inductive.  When does it have a model
  companion?  The arguments for $C$XF seem to generalize,
  \emph{provided} that the following conditions hold:
  \begin{itemize}
  \item $T$ has elimination of imaginaries.  This lets us reason about
    ``finite extensions'' in a coherent way.
  \item $T$ has the definable multiplicity property.  More precisely,
    we need the class of definable sets of Morley degree 1 to be
    ind-definable (a small union of definable families).  This allows
    one to talk about ``irreducible varieties''.
  \item Given a definable set $V$ of Morley degree 1, we need an
    effective bound on the complexity of generic maps $V
    \dashrightarrow C$.
  \end{itemize}
  We don't know how to find this combination of properties in other
  $\omega$-stable theories.  It seems worth looking into DCF and its
  reducts, as well as Hrushovski fusions of ACF with other theories.
\end{enumerate}
We leave these questions to future papers.
\begin{remark}
  The class of PAC fields is much richer than the class of
  algebraically closed fields, and one might hope to complete the
  analogy
\begin{quote}
  ACF : PAC :: $C$XF : ``pseudo $C$XF''.
\end{quote}
Presumably, ``pseudo $C$XF'' fields would have more variability in their Galois
groups.

The obvious strategy would be to look at fields that avoid $C$, and
are e.c.\@ in \emph{regular} extensions $L/K$ which avoid $C$.
However, Corollary~\ref{drop-axiom}(2) shows that this is equivalent
to $C$XF, rather than a new theory.  In particular, the Galois groups
would still be $\omega$-free groups, by Theorem~\ref{omega-free}.  Consequently, there doesn't seem to be a meaningful generalization in this direction.
\end{remark}

\begin{acknowledgment}
  The first author was supported by the National Natural Science
  Foundation of China (Grant No.\@ 12101131) and the Ministry of Education of China (Grant No.\@ 22JJD110002). Part of the work was completed when the second author was a postdoctoral fellow funded by Fondation Sciences Math\'ematiques de Paris.  

  Both of the authors would like to express their gratitude to Erik Walsberg, from whom the project benefited significantly during its early stages.  We would also like to thank the two anonymous referees, who caught numerous errors. The second author would like to thank Ariyan Javanpeykar for helpful comments and discussions.
\end{acknowledgment}

\bibliographystyle{amsalpha}
\bibliography{ref}

\providecommand{\bysame}{\leavevmode\hbox to3em{\hrulefill}\thinspace}
\providecommand{\MR}{\relax\ifhmode\unskip\space\fi MR }
\providecommand{\MRhref}[2]{%
  \href{http://www.ams.org/mathscinet-getitem?mr=#1}{#2}
}
\providecommand{\href}[2]{#2}
\begin{thebibliography}{CvdDM80}

\bibitem[CH04]{CH-bounded}
Zo\'{e} Chatzidakis and Ehud Hrushovski, \emph{Perfect pseudo-algebraically
  closed fields are algebraically bounded}, J. Algebra \textbf{271} (2004),
  no.~2, 627--637. \MR{2025543}

\bibitem[Cha02]{zoe-omega-forking}
Zo\'{e} Chatzidakis, \emph{Properties of forking in {$\omega$}-free
  pseudo-algebraically closed fields}, J. Symbolic Logic \textbf{67} (2002),
  no.~3, 957--996. \MR{1925952}

\bibitem[Cha08]{zoe-note}
\bysame, \emph{Independence in (unbounded) {PAC} fields, and imaginaries},
  unpublished note (2008).

\bibitem[CR16]{artem-nick}
Artem Chernikov and Nicholas Ramsey, \emph{On model-theoretic tree properties},
  J. Math. Log. \textbf{16} (2016), no.~2, 1650009, 41. \MR{3580894}

\bibitem[CvdDM80]{reg-closed}
Gregory Cherlin, Lou van~den Dries, and Angus Macintyre, \emph{The elementary
  theory of regularly closed fields}, unpublished note (1980).

\bibitem[DS04]{shelah-sop1}
Mirna D\u{z}amonja and Saharon Shelah, \emph{On
  {$\vartriangleleft^*$}-maximality}, Ann. Pure Appl. Logic \textbf{125}
  (2004), no.~1-3, 119--158. \MR{2033421}

\bibitem[FJ05]{field-arithmetic}
Michael~D. Fried and Moshe Jarden, \emph{Field arithmetic}, second ed.,
  Ergebnisse der Mathematik und ihrer Grenzgebiete. 3. Folge. A Series of
  Modern Surveys in Mathematics [Results in Mathematics and Related Areas. 3rd
  Series. A Series of Modern Surveys in Mathematics], vol.~11, Springer-Verlag,
  Berlin, 2005. \MR{2102046}

\bibitem[FLS17]{diff-chow}
James Freitag, Wei Li, and Thomas Scanlon, \emph{Differential {C}how varieties
  exist}, J. Lond. Math. Soc. (2) \textbf{95} (2017), no.~1, 128--156, With an
  appendix by William Johnson. \MR{3653087}

\bibitem[Ful98]{Fulton-intersection}
William Fulton, \emph{Intersection theory}, second ed., Ergebnisse der
  Mathematik und ihrer Grenzgebiete. 3. Folge. A Series of Modern Surveys in
  Mathematics [Results in Mathematics and Related Areas. 3rd Series. A Series
  of Modern Surveys in Mathematics], vol.~2, Springer-Verlag, Berlin, 1998.
  \MR{1644323}

\bibitem[FV92]{Fri-Vol-Hil-PAC}
Michael~D. Fried and Helmut V\"{o}lklein, \emph{The embedding problem over a
  {H}ilbertian {PAC}-field}, Ann. of Math. (2) \textbf{135} (1992), no.~3,
  469--481. \MR{1166641}

\bibitem[GLP83]{GLP-bound}
L.~Gruson, R.~Lazarsfeld, and C.~Peskine, \emph{On a theorem of {C}astelnuovo,
  and the equations defining space curves}, Invent. Math. \textbf{72} (1983),
  no.~3, 491--506. \MR{704401}

\bibitem[Har77]{Hartshorne-AG}
Robin Hartshorne, \emph{Algebraic geometry}, Graduate Texts in Mathematics,
  vol.~52, Springer, 1977.

\bibitem[Har92]{HarrisAG}
Joe Harris, \emph{Algebraic geometry: A first course}, Graduate Texts in
  Mathematics, vol. 133, Springer, 1992.

\bibitem[Hen72]{hensen-graph}
C.~Ward Henson, \emph{Countable homogeneous relational structures and {$\aleph
  _{0}$}-categorical theories}, J. Symbolic Logic \textbf{37} (1972), 494--500.
  \MR{321727}

\bibitem[Hir64]{Hironaka-resolution}
Heisuke Hironaka, \emph{Resolution of singularities of an algebraic variety
  over a field of characteristic zero. {I}, {II}}, Ann. of Math. (2) {\bf 79}
  (1964), 109--203; ibid. (2) \textbf{79} (1964), 205--326. \MR{0199184}

\bibitem[Hod93]{Hodges}
Wilfrid Hodges, \emph{Model theory}, Encyclopedia of mathematics and its
  applications, vol.~42, Cambridge University Press, 1993.

\bibitem[Igu60]{igusa-genus-2}
Jun-ichi Igusa, \emph{Arithmetic variety of moduli for genus two}, Ann. of
  Math. (2) \textbf{72} (1960), 612--649. \MR{114819}

\bibitem[JK10]{JK-slim}
Markus Junker and Jochen Koenigsmann, \emph{Schlanke {K}\"{o}rper (slim
  fields)}, J. Symbolic Logic \textbf{75} (2010), no.~2, 481--500. \MR{2648152}

\bibitem[JK20]{algebraic_hyperbolicity}
Ariyan Javanpeykar and Ljudmila Kamenova, \emph{Demailly’s notion of
  algebraic hyperbolicity: geometricity, boundedness, moduli of maps},
  Mathematische Zeitschrift \textbf{296} (2020), 1645--1672.

\bibitem[JTWY24]{firstpaper}
Will Johnson, Chieu-Minh Tran, Erik Walsberg, and Jinhe Ye, \emph{The
  \'etale-open topology and the stable fields conjecture}, J. Eur. Math. Soc.
  (JEMS) \textbf{26} (2024), no.~10, 4033--4070. \MR{4768414}

\bibitem[JY23]{geom-field}
Will Johnson and Jinhe Ye, \emph{A note on geometric theories of fields}, Model
  Theory \textbf{2} (2023), no.~1, 121--132. \MR{4659508}

\bibitem[Laz04]{positive-II}
Robert Lazarsfeld, \emph{Positivity in algebraic geometry. {II}}, Ergebnisse
  der Mathematik und ihrer Grenzgebiete. 3. Folge. A Series of Modern Surveys
  in Mathematics [Results in Mathematics and Related Areas. 3rd Series. A
  Series of Modern Surveys in Mathematics], vol.~49, Springer-Verlag, Berlin,
  2004, Positivity for vector bundles, and multiplier ideals. \MR{2095472}

\bibitem[Mac71]{Macintyre-omegastable}
Angus Macintyre, \emph{On {$\omega _{1}$}-categorical theories of fields},
  Fund. Math. \textbf{71} (1971), no.~1, 1--25. (errata insert). \MR{290954}

\bibitem[Mac19]{glp_bound_note}
Eoin Mackall, \emph{A degree-bound on the genus of a projective curve},
  Available on author’s webpage: https://www. eoinmackall. com/s/deggen2. pdf
  \textbf{19} (2019).

\bibitem[Mil86]{milne_abelian}
J.~S. Milne, \emph{Abelian varieties}, Arithmetic geometry ({S}torrs, {C}onn.,
  1984), Springer, New York, 1986, pp.~103--150. \MR{861974}

\bibitem[Poi00]{Poizat_2000}
Bruno Poizat, \emph{A course in model theory}, Springer New York, 2000.

\bibitem[Pop96]{pop-embedding}
Florian Pop, \emph{Embedding problems over large fields}, Ann. of Math. (2)
  \textbf{144} (1996), no.~1, 1--34. \MR{1405941}

\bibitem[Pop14]{Pop-little}
\bysame, \emph{Little survey on large fields---old \& new}, Valuation theory in
  interaction, EMS Ser. Congr. Rep., Eur. Math. Soc., Z\"{u}rich, 2014,
  pp.~432--463. \MR{3329044}

\bibitem[PW23]{with-anand}
Anand Pillay and Erik Walsberg, \emph{Galois groups of large simple fields},
  Model Theory \textbf{2} (2023), no.~2, 357--380, With an appendix by Philip
  Dittmann. \MR{4662635}

\bibitem[She96]{shelah-sop3}
Saharon Shelah, \emph{Toward classifying unstable theories}, Ann. Pure Appl.
  Logic \textbf{80} (1996), no.~3, 229--255. \MR{1402297}

\bibitem[Sri19]{Srinivasan}
Padmavathi Srinivasan, \emph{A virtually ample field that is not ample}, Israel
  J. Math. \textbf{234} (2019), no.~2, 769--776. \MR{4040844}

\bibitem[{Sta}20]{stacks-project}
The {Stacks Project Authors}, \emph{\textit{Stacks Project}},
  \url{https://stacks.math.columbia.edu}, 2020.

\bibitem[vdD89]{lou-dimension}
Lou van~den Dries, \emph{Dimension of definable sets, algebraic boundedness and
  {H}enselian fields}, Ann. Pure Appl. Logic \textbf{45} (1989), no.~2,
  189--209, Stability in model theory, II (Trento, 1987). \MR{1044124}

\bibitem[WY23]{secondpaper}
Erik Walsberg and Jinhe Ye, \emph{\'{E}z fields}, J. Algebra \textbf{614}
  (2023), 611--649. \MR{4499357}

\end{thebibliography}

\end{document}